\newcommand{\MW}[1]{{#1}}
\title{A Probabilistic Approach to\\ Shape Derivatives}
\author{
    Luka Schlegel\thanks{University of Trier, Department IV -- Mathematics, Universit\"{a}tsring 19, 54296 Trier, Germany.\\ L.\ Schlegel gratefully acknowledges financial support from the German Research Foundation (DFG)  within the Priority program SPP 1962 "Non-smooth and Complementarity-based Distributed Parameter Systems: Simulation and Hierarchical Optimization".\\ M.\ Würschmidt gratefully acknowledges financial support from the German Research Foundation (DFG) within the Research Training Group 2126: Algorithmic Optimization.}
    \and 
    Volker Schulz\footnotemark[1]
    \and 
    Frank T. Seifried\footnotemark[1] 
    \and 
    Maximilian Würschmidt\footnotemark[1]
}
\date{}
\begin{document}
\allowdisplaybreaks
\maketitle

\begin{abstract}
    We introduce a novel mesh-free and direct method for computing the shape derivative in PDE-constrained shape optimization problems. Our approach is based on a probabilistic representation of the shape derivative and is applicable for second-order semilinear elliptic PDEs with Dirichlet boundary conditions and a general class of target functions. The probabilistic representation derives from an extension of a boundary sensitivity result for diffusion processes due to Costantini, Gobet and El~Karoui \cite{Karoui_bdry}. Moreover, we present a simulation methodology based on our results that does not necessarily require a mesh of the relevant domain, and provide Taylor tests to verify its numerical accuracy.

    \bigskip
    \noindent\textbf{Mathematics Subject Classification (2020)}: 49Q10, 65N75

    \bigskip
    \noindent\textbf{Keywords}: shape optimization, boundary sensitivity, semilinear elliptic PDE, Feynman-Kac representation, Monte Carlo methods, stochastic Gronwall
\end{abstract}

%---------------------------------------------------
% 1. Introduction
%---------------------------------------------------
\section{Introduction}
The optimization of shapes is a challenging task to be solved in ubiquitous application problems. This research area is quite mature but nevertheless a very active field. A brief and current overview can be gained from \cite{Alllaire2021}.
Foundational monographs on shape optimization include, e.g., \cite{Delfour_Shape, Haslinger2003,Pironneau_Shape, Simon1980, Sokolowski_Shape}.
Although shapes do not define a vector space in a straightforward way, most shape optimization algorithms perform a descent algorithm based on shape derivatives. The notion of the shape derivative is based on shape sensitivities dating back to Hadamard's visionary publication \cite{Hadamard1908}. These sensitivities are based on shape variations, where in most cases the perturbation of identity is used and in some cases the more general speed method; for both we refer to the monographs mentioned above. If the set of admissible shapes defines a Riemannian manifold of sufficiently smooth shapes, the notion of a covariant derivative can be used as sensitivities \cite{VHS-shape-Riemann}. The paper \cite{Schmidt-Schulz-2023} discusses shape derivatives of second order and their usage in shape optimization algorithms within a vector space framework for the set of perturbations. 

Many applications of shape optimization methods involve a state equation formed by partial differential equations, which must be solved in a computational domain characterized by the shape under interest. Thus the objective criterion implicitly depends on this state equation. This dependency can be evaluated numerically by perturbing each mesh point of the shape, leading to so-called mesh sensitivities. This leads to a tremendous usage of memory and computing resources. A more efficient and mostly used alternative to treat this implicit dependency is applying a Lagrangian technique involving Lagrange multipliers. A detailed discussion of the Lagrangian approach can be found in \cite{Kunisch2008} and in particular a foundational discussion in the shape context in \cite{laurain2016distributed}. The adjoint approach necessitates the solution of the adjoint equation in addition to the state equation.

After this conceptual discussion of the literature background, we outline the \textbf{main novelty of this paper:} based on probabilistic representations for the solutions of semilinear elliptic partial differential equations, we derive expressions for the shape derivative that do not require Lagrangian multipliers or adjoint equations. Thus, we call this a direct method. On a computational level, the evaluation of these expressions is based on Monte-Carlo simulation and has the potential to be more efficient than mesh sensitivities or the Lagrangian approach. Since it is direct, our approach also does not necessitate a computational mesh, which means that we provide a mesh-free method for the evaluation of the shape derivative, although a rather general elliptic PDE defines the state equation for the shape optimization problem under investigation. 

The remainder of this section is devoted to a literature review of further aspects of our approach.

\textbf{Probabilistic methods regarding Eulerian shape derivatives.}
The Eulerian shape derivative of a solution of a second order \textit{parabolic} partial differential equation has been investigated from a probabilistic perspective in the literature. Specifically, the linear case is considered in \cite{Karoui_bdry} where a probabilistic representation of the Eulerian shape derivative is derived as boundary sensitivity result of suitable diffusion processes. 
Moreover, for a special type of linear equations; namely Poisson type equations with constant source term and vanishing boundary condition, the solution identifies with the expectation of the first exit time of an appropriate diffusion. In this case the Eulerian shape derivative corresponds to the $L^1$-derivative of the corresponding exit times.
In this context, for the linear parabolic case an asymptotic equivalence between boundary perturbations and the simulation error of the corresponding exit times is discussed in \cite{Gobet2010}. 
In \cite{Dokuchaev2004, Dokuchaev2015} bounds for the $L^1$-distance of the exit times from two bounded domains are provided. This might be seen as a first step towards the linear elliptic case. Nonetheless, the setting does not precisely match that of shape calculus, and the bounds are not sufficient to establish shape derivatives.
To the best of our knowledge, a probabilistic representation of shape derivatives for \textit{semilinear elliptic} PDE, as considered in this paper, has not yet been investigated in the literature.

\textbf{Probabilistic methods regarding shape functionals.}
A probabilistic representation of the shape derivative of a shape functional as derived in this work is a novel contribution to the literature. We are only aware of two related contributions that are connected in a broad sense: \cite{GatarekSokolowski1992} discuss the derivative of a shape functional consisting of the expectation of an $L^2$-norm of solutions of parabolic and hypoelliptic stochastic evolution equations, and \cite{Rousset2010} provides a probabilistic interpretation of shape functional derivatives in the context of quantum groundstates.

\textbf{Shape optimization under uncertainty.}
There is also a literature on shape optimization under uncertainty, where shape optimization problems are augmented by exogenous random shocks; we refer to \cite{Allaire2015} for a general overview.
For instance, these shocks may occur in the form of randomness in the target functional or PDE coefficients; see e.g.\ \cite{Allaire2005,Conti2009,Dambrine2015, GeiersbachLoayzaRomeroWelker2021,Guan2023,Martinez2015};
or as random geometric disturbances, see e.g.\ \cite{Chen2011, Schillings2011}.
We emphasize that, by contrast, in this paper we investigate classical shape derivatives in the absence of any random perturbations. Probabilistic arguments and methods are merely used as mathematical tools to analyze these (deterministic) problems.

\textbf{Outline.}
The paper is organized as follows. In Section~\ref{Section:pde_constrained_shape_derivatives} we introduce the general setting, provide the basic definitions concerning shape derivatives and give an informal description of our main results.

Section~\ref{Section:ProbRepresentationShapeDerivative} provides the stochastic framework and the rigorous mathematical analysis for our main results: the probabilistic representation of the Eulerian shape derivative for the solution of a semilinear elliptic PDE in Theorem~\ref{theorem:ProbabilisticRepresentationShapeDerivative} and the probabilistic representation of the shape derivative for a shape functional in Theorem~\ref{theorem:ProbabilisticRepresentationShapeFunctionalDerivative}.
Section~\ref{section:ProofProbabilisitcRepresentation} provides the proof of Theorem~\ref{theorem:ProbabilisticRepresentationShapeDerivative}.

In Section~\ref{Section:SimulationProbShape} and Section~\ref{Section:TaylorTest} we discuss a numerical implementation of our probabilistic representation of shape derivatives. In particular, we propose a mesh-free method as well as a hybrid approach based on the results of Section~\ref{Section:ProbRepresentationShapeDerivative}. We conclude with numerical results in a benchmark application verifying the accuracy of our methodology using a Taylor test.

Readers primarily interested in shape optimization and the application of our results may focus on Sections~\ref{Section:pde_constrained_shape_derivatives}, \ref{Section:SimulationProbShape} and \ref{Section:TaylorTest} and the statements of Theorem~\ref{theorem:ProbabilisticRepresentationShapeDerivative} and Theorem~\ref{theorem:ProbabilisticRepresentationShapeFunctionalDerivative}.

%---------------------------------------------------
% 2. Discussion of Main Results
%---------------------------------------------------
\section{Discussion of Main Results}\label{Section:pde_constrained_shape_derivatives}
In the following we provide the fundamental definitions concerning shape derivatives with PDE constraints used throughout this article. Thus let $\Omega\subset\R^d$ be a bounded domain,\footnote{We use standard terminology and refer to a subset of $\R^d$ as a domain if it is open and connected.} and denote by
\[
    \pertSpace\defined\big\{\dir\colon\R^d\rightarrow\R^d\,|\,\dir\text{ is of class $\C^2$ and bounded}\big\} 
\]
the space of admissible distortions. Each $\dir\in\pertSpace$ canonically induces a perturbed domain $\pertDomain$ via
\[
    \pertDomain \defined \big\{ x \in\R^d \,\big|\, T_\varepsilon^\dir (x)\in\Omega \big\}
\]
where $\varepsilon$ is a distortion factor and $T_\varepsilon^\dir\colon x\mapsto x+\varepsilon\dir(x)$ denotes the shift operator in direction $\dir$, also referred to as a perturbation of the identity; see e.g.\ \cite[Chapter 3]{Delfour_Shape}. For readers from a shape optimization background, we point out that the above definition of $\pertDomain$ as the pre-image of the perturbation may seem unusual, as the shape optimization literature usually defines $\pertDomain$ as the image of the perturbation. This is not a substantial difference, but should be taken into account to avoid confusion. Our definition of $\pertDomain$ avoids technical difficulties in the proofs for Section~\ref{Section:ProbRepresentationShapeDerivative}.

\begin{remark}\label{remark:Epsilon0}
    Note that, for each fixed $\dir\in\pertSpace$, since $\dir$ is of class $\mathcal{C}^2$ and $\Omega$ is bounded, there exists $\varepsilon^\dir_0>0$ such that $T_\varepsilon^V$ is bijective for all $\varepsilon\in[-\varepsilon^\dir_0,\varepsilon^\dir_0]$. Throughout this article, whenever we use a direction $\dir\in\pertSpace$ and a distortion factor $\varepsilon$, we implicitly assume that $\varepsilon\in[-\varepsilon^\dir_0,\varepsilon^\dir_0]$. For later reference we define the corresponding bounded hold-all domain
    \begin{equation}\label{def:HoldAllDomain}
        \HoldAll^\dir \defined \bigcup_{\varepsilon \in [-\varepsilon^\dir_0,\varepsilon^\dir_0]} \pertDomain
    \end{equation}
    and write $\HoldAll=\HoldAll^\dir$ and $\varepsilon=\varepsilon^\dir$ when $\dir$ is clear from the context; similarly we denote $\|\cdot\|_\infty \defined \|\cdot\|_{L^\infty (\HoldAll)}$.
    If the domain of a function is smaller, the norm is taken as that of the corresponding restriction.\close
\end{remark}
Let $\SemiDiffOp$ denote the differential operator defining the PDE constraint.
Under suitable conditions, specified in detail in Section~\ref{Section:ProbRepresentationShapeDerivative} below, for each distortion $\dir\in\pertSpace$ and sufficiently small distortion factor $\varepsilon$ there exists a unique solution $u_\varepsilon^\dir\colon\pertDomain\rightarrow\R$ of the PDE
\begin{align}\label{PDE with general elliptic differential operator}
    \SemiDiffOp[u_\varepsilon^\dir] =&\ 0 \qquad \text{on } \pertDomain,\notag\\
    u_\varepsilon^\dir =&\ g  \qquad \text{on } \partial\pertDomain.
\end{align}
Given a function $\phi\colon\R^d\times\R\rightarrow\R$ of class $\mathcal{C}^1$, we may thus consider the functional
\begin{align}\label{Def:Shape_Functional}
    \Phi\colon u\mapsto\int_{\text{dom}(u)}\phi\big(x,u(x)\big)\dLebesgue{d}{x}
\end{align}
where $\text{dom}(u)$ denotes the domain of definition of $u$, provided the integral is well-defined.\footnote{Note that the integral depends on $u$ both via the integrand and via the integration region $\text{dom}(u)$. For instance, since $\text{dom}(u^\dir_\varepsilon)=\Omega^\dir_\varepsilon$ we have
\[
    \Phi(u_\varepsilon^\dir) = \int_{\pertDomain}\phi\big(x,u^\dir_\varepsilon(x)\big)\dLebesgue{d}{x}\qquad\text{for all }\dir\in\pertSpace\ \text{and }\varepsilon\in[-\varepsilon_0,\varepsilon_0].
\]}
With this notation, we can state the definitions of the shape derivative of the PDE solution $u$ and the shape derivative of the functional $\Phi$.
\begin{definition}\label{Def:ShapeDerivative}
    Suppose that for each $\dir\in\pertSpace$ the Gateaux derivative
    \begin{equation*}
        \D u[\dir](x) \defined \frac{\de}{\de \varepsilon}\Big|_{\varepsilon=0} u_\varepsilon^\dir(x)\qquad \text{exists for every }x\in\overline{\Omega}.
    \end{equation*}
    Then the Euler shape derivative of $u$ (briefly, the shape derivative of $u$) is defined as the map\footnote{The domain of $\D u[\dir]$ does not depend on $\dir$ since $\Omega = \Omega_0^\dir$ for any $\dir\in\pertSpace$. $\mathfrak{B}(\overline{\Omega})$ denotes the space of Borel measurable functions $\overline{\Omega}\to\R$.}
    \begin{equation*}
        \D u\colon \pertSpace \rightarrow \mathfrak{B}(\overline{\Omega}),
        \quad \dir\mapsto \D u[\dir].
    \end{equation*}
    If the corresponding limit exists, the shape derivative of the functional $\Phi$ is defined as the map
    \begin{align}
        &\D\Phi \colon \pertSpace \rightarrow \R,\\
        &\D\Phi [\dir] \defined\frac{\de}{\de \varepsilon}\Big|_{\varepsilon=0} \Phi(u_\varepsilon^\dir). \closeEqn
    \end{align} 
\end{definition}

\begin{remark}
    For the sake of completeness, we first point out that the shape derivative typically considered in shape optimization is based on a push-forward definition of the perturbed domain;
    \begin{equation*}
        \mathcal{D}\Phi [\dir] \defined \lim_{\varepsilon\to 0} \tfrac{1}{\varepsilon} \big(\Phi\big(T_\varepsilon^\dir(\Omega)\big) - \Phi(\Omega)\big) \quad\text{for every}\quad\dir\in\pertSpace .
    \end{equation*}
    Since our definition is based on the corresponding pre-image, these two definitions coincide only up to the sign, i.e.
    \begin{equation*}
        \mathcal{D}\Phi[\dir] = -\D\Phi[\dir],\quad \dir\in\pertSpace .
    \end{equation*}
    Second, the recent literature on shape optimization also investigates the weaker notion of semi-derivatives of shape functionals, see e.g.\ \cite[Definition 3.2]{Delfour_Shape} or \cite[Section 2.5.1]{SturmDissertation}. Under the regularity conditions of this article, the notion of a semi-derivative does not add generality since the shape derivatives in the sense of Definition~\ref{Def:ShapeDerivative} exist.\close
\end{remark}
In Section~\ref{Section:ProbRepresentationShapeDerivative} we establish, under suitable conditions, existence of the shape derivatives in the sense of Definition~\ref{Def:ShapeDerivative}, and we provide probabilistic representations of $\D u[\dir]$ and $\D\Phi [\dir]$.
Importantly, the probabilistic representation of $\D\Phi[\dir]$ is a boundary representation that is amenable to direct Monte Carlo simulation.
In the following, we spell this out in more detail:
To wit, consider a semilinear elliptic convection-diffusion equation of the form\footnote{See Section~\ref{Section:ProbRepresentationShapeDerivative} for the exact formulation including all necessary assumptions.}
\begin{align}\label{PDE Anwender Darstellung}
    v(x)^\top\nabla u(x)+\divergence\big(K\nabla u\big)(x)+f\big(x,u(x)\big)=&\ 0, \quad & x&\in\Omega,\notag\\
    u(x) =&\ g(x), \quad &x&\in\partial\Omega.
\end{align}
Theorem \ref{theorem:ProbabilisticRepresentationShapeDerivative} provides a probabilistic representation of $\D u[\dir]$ via 
\begin{align*}
    \D u[\dir] (x) = \E\bigg[\exp\Big(\int_0^{\tau^x} \Diff_u f\big(X_s^x,u(X_s^x)\big)\de s\Big) \big\langle\nabla u-\nabla g, \dir\big\rangle\big(X_{\tau^x}^x\big)\bigg],
\end{align*}
where $X^x$ is a suitable diffusion process with first exit $\tau^x$ from $\Omega$.
Based on this, Theorem \ref{theorem:ProbabilisticRepresentationShapeFunctionalDerivative} yields a probabilistic representation of the shape derivative $\D\Phi[\dir]$ via
\begin{multline*}
    \D\Phi [\dir] = \constant{+}\E\Big[\big\langle\dir,\nabla u-\nabla g\big\rangle (\enX^+)\Big] - \constant{-} \E\Big[\big\langle\dir,\nabla u-\nabla g\big\rangle (\enX^-)\Big]\\
    -\int_{\partial\Omega} \big\langle \dir , \phi(\cdot, u)n \big\rangle \dSph{d-1}.
\end{multline*}
Here $\enX^\pm$ are random variables taking values in $\partial\Omega$, the constants $\constant{\pm}\geq 0$ are given explicitly by 
\[
    \constant{\pm}\defined\pm\int_{\Omega^\pm}\Dphiu\phi\big(x,u(x)\big)\dLebesgue{d}{x}
\]
and $\Dphiu\phi$ denotes the derivative w.r.t.\ the second component of $\phi$. In particular, except possibly for the computation of the constants $\constant{\pm}$, the probabilistic representation of $\D\Phi [\dir]$ depends only on evaluations of $u$ at the boundary of $\Omega$. Moreover, $\constant{\pm}$ in turn do not depend on $\dir$, and hence have to be determined only once to obtain $\D\Phi[\dir]$ for all $\dir\in\pertSpace$.
Note further that, since $\enX^\pm$ takes values in $\partial\Omega$, the expectations in the probabilistic representation represent boundary integrals; specifically,
\[
\E\Big[\big\langle\dir,\nabla u-\nabla g\big\rangle (\enX^\pm)\Big] = \int_{\partial\Omega} \big\langle\dir,\nabla u-\nabla g\big\rangle \de\nu^\pm
\]
where $\nu^\pm$ denotes the distribution of $\enX^\pm$.
Finally, observe that the PDE coefficients $v$, $K$ and $f$ from \eqref{PDE Anwender Darstellung} do not appear explicitly in the probabilistic representation of the shape derivative $\D\Phi$; they are implicit in $u$ and the dynamics of $\enX^\pm$.

In Section~\ref{Section:ProbRepresentationShapeDerivative} we postulate standing assumptions and formally present rigorous statements of our main results. Readers with a focus on shape optimization and applications of our probabilistic representations may focus on the statements of Theorem~\ref{theorem:ProbabilisticRepresentationShapeDerivative} and Theorem~\ref{theorem:ProbabilisticRepresentationShapeFunctionalDerivative} and move on to Section~\ref{Section:SimulationProbShape}.

%---------------------------------------------------
% 3. Probabilistic Representation of Shape Derivative
%---------------------------------------------------
\section{Probabilistic Representation of Shape Derivatives}\label{Section:ProbRepresentationShapeDerivative}
This section presents our main result, a probabilistic representation of the shape derivative $\D\Phi$ of $\Phi$. To obtain this, we proceed in two steps:
First, we split the shape derivative $\D \Phi$ into a boundary integral and a term that involves (an integral over the entire domain $\Omega$ of) the shape derivative $\D u$ of $u$. Second, we use a Feynman-Kac representation to transform the term involving $\D u$ into a probabilistic boundary representation.

To begin with, we state the relevant regularity conditions. For $k\in\nat_0$ and $\gamma\in(0,1)$ we say that a function is of class $\HoelderSpace{k}{\gamma}$ if it is $k$-times continuously differentiable with $\gamma$-Hölder continuous derivatives, and we denote the space of functions of class $\HoelderSpace{k}{\gamma}$ on $\Omega$ by $\HoelderSpace{k}{\gamma}(\Omega)$. We refer to Definition~\ref{Definition Hoelder Space} in the Appendix or \cite[p.52]{GilbargTrudinger} for further details.

\begin{domain_assumption}\label{assumption:domain}
	The domain $\Omega$ is bounded and its boundary $\partial\Omega$ is of class $\HoelderSpace{2}{\gamma}$ for some $\gamma\in(0,1)$, i.e.\ $\partial\Omega$ admits a representation via maps of class $\HoelderSpace{2}{\gamma}$. \close
\end{domain_assumption}
This condition is standard in the literature on elliptic PDEs; we refer to \cite[p.64]{friedman1964} and to \cite[p.94]{GilbargTrudinger}.\footnote{Note that in \cite[p.64]{friedman1964} this property is called \textit{property $\Bar{E}$} and imposes the equivalent condition that $\partial\Omega$ can be represented as a graph of a function of class $\HoelderSpace{2}{\gamma}$.}
Note that \hyperref[assumption:domain]{\emph{(Dom)}} implies in particular that $\Omega$ satisfies an exterior sphere condition, and that the outer normal vector field $n\colon\partial\Omega\rightarrow\R^d$ is well-defined; see e.g.\ \cite[Proposition 10.39]{LeeSmoothManifolds}. 

Concerning the PDE constraint, we consider the second-order differential operator
\begin{equation}\label{Def_SemilinearPDEOperator}
	\SemiDiffOp [u]\defined\InfGen[u] + f(\cdot, u)
\end{equation}
where $\InfGen$ denotes the linear elliptic operator
\begin{equation}\label{eqnInfGen}
	\InfGen[u](x)=\mu(x)^\top\Diff u(x)+\tfrac{1}{2}\trace\bigl[\sigma(x)\sigma(x)^\top\Diff^2u(x)\bigr].
\end{equation}
Thus the PDE constraint is given by
\begin{align}\label{eqnSemilinearPDE}
	\InfGen[u] + f(\cdot, u) =&\ 0 \qquad \text{on } \Omega,\notag \\
	u =&\ g         \qquad \text{on } \partial\Omega.
\end{align}
To ensure existence and uniqueness of solutions to \eqref{eqnSemilinearPDE} we impose the following standard assumptions on the PDE coefficients $\mu$, $\sigma$, $f$ and $g$.
\begin{pde_assumption}\label{assumption:LadyzhenskayaPardoux} 
	$\mu\colon\R^d\rightarrow\R^d$ is of class $\C^1$, $\sigma\colon\R^d\rightarrow\R^{d\times d}$ is of class $\C^2$, and $\sigma\sigma^\top$ is strictly elliptic on $\Omega$.\footnote{The matrix $\sigma\sigma^\top (x)$ is positive definite for each $x\in\Omega$ and the eigenvalues are uniformly bounded away from zero; see \cite[p.31]{GilbargTrudinger}.}
	Moreover $f\colon\R^d\times\R\rightarrow\R$ is of class $\C^{1,2}$ and satisfies\footnote{The condition $\Diff_u f\leq 0$ is used in the proof of Theorem \ref{theorem:ProbabilisticRepresentationShapeFunctionalDerivative}. The representation of the shape derivative $\D u$ as stated in Theorem \ref{theorem:ProbabilisticRepresentationShapeDerivative} is valid if $\Diff_u f$ is merely bounded above.} $\Diff_u f\leq 0$, and there is a constant $C \geq 0$ such that
	\begin{align*}
		\sign (u) f(x,u) \leq C, \quad (x,u)\in\Omega\times\R.
	\end{align*}
	Finally $g\colon\R^d\rightarrow\R^d$ is of class $\HoelderSpace{2}{\gamma}(\overline{\Omega})$ for some $\gamma\in (0,1)$. \close
\end{pde_assumption}

%	UUU	UPDATE
\MW{
	Now we introduce the stochastic setting: Let $(\mathfrak{X},\mathfrak{A},\mathfrak{F},\prob)$ be a filtered probability space, where we assume that the filtration $\mathfrak{F}$ is generated by a $d$-dimensional Brownian motion $W$ augmented by all $\prob$-nullsets. The diffusion associated\footnote{Existence and uniqueness of $X^x$ is ensured under much weaker conditions than our standing assumptions, see e.g.\ \cite[Theorem 5.2.5]{KaratzasShreve}.} to $\InfGen$ is characterized by 
	\begin{equation}\label{Def:ForwardDiffusion}
		X_t^x = x + \int_0^t \mu(X_s^x)\de s + \int_0^t \sigma(X_s^x)\de W_s, \qquad t\geq 0,
	\end{equation}
	where $x\in\R^d$ is fixed, and we denote the first exit time of $X^x$ from $\Omega$ by
	\[
	\tau^x \defined \tau^x_{\Omega} \defined\inf\big\{t\geq0\,|\,X_t^{x}\notin \Omega\big\}.
	\]
	\begin{holdAll_exp_assumption}\label{assumption:holdAll_exp}
		%	REF		\hyperref[assumption:holdAll_exp]{\emph{(E)}}
		Assume there is $\rho > 2$ such that $\sup_{x\in\HoldAll} \E [\exp(\rho\tau_{\HoldAll}^x)]<\infty$. \close
	\end{holdAll_exp_assumption}
	In the following, we take $\gamma\in(0,1)$ such that both \hyperref[assumption:domain]{\emph{(Dom)}} and \hyperref[assumption:LadyzhenskayaPardoux]{\emph{(PDE)}} are satisfied, and assume \hyperref[assumption:holdAll_exp]{\emph{(E)}}. Under these assumptions, we have the following well-known result:
	\begin{proposition}\label{PropExistenceOfPDESolution}
		The PDE \eqref{eqnSemilinearPDE} admits a unique solution $u\in\HoelderSpace{2}{\gamma}(\overline{\Omega})$. \close
	\end{proposition}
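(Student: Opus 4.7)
The plan is to combine classical a priori $L^\infty$ estimates with Schauder regularity theory and a Leray--Schauder fixed-point argument for existence, and to use the monotonicity condition $\Diff_u f \leq 0$ together with the weak maximum principle for uniqueness. This is the standard program for semilinear Dirichlet problems under \hyperref[assumption:domain]{\emph{(Dom)}} and \hyperref[assumption:LadyzhenskayaPardoux]{\emph{(PDE)}}, with \cite{GilbargTrudinger} providing all the necessary ingredients.

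For existence, I would first establish an a priori $L^\infty$ bound on any classical solution by comparison with explicit barriers of the form $w^\pm(x) = \pm\bigl(A - B e^{\alpha x_1}\bigr)$: using the strict ellipticity of $\sigma\sigma^\top$, the exponent $\alpha$ is chosen large enough so that $\InfGen[w^+] \leq -C$ (resp.\ $\InfGen[w^-] \geq C$) uniformly on $\Omega$, while $A, B$ are chosen so that $w^\pm$ keeps the appropriate sign on $\overline\Omega$ and $w^- \leq g \leq w^+$ on $\partial\Omega$; combining this with the structural condition $\sign(u) f(x,u) \leq C$ turns $w^\pm$ into a super-/sub-solution of \eqref{eqnSemilinearPDE}, and the weak maximum principle yields $\|u\|_\infty \leq M$ with $M$ depending only on the data. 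I would then apply Leray--Schauder to the operator $T\colon v \mapsto u$ that sends $v \in \HoelderSpace{0}{\gamma}(\overline{\Omega})$ to the unique $\HoelderSpace{2}{\gamma}(\overline{\Omega})$ solution of the linear problem $\InfGen[u] = -f(\cdot, v)$ in $\Omega$, $u = g$ on $\partial\Omega$, well-posed by Schauder's boundary estimate (cf.\ \cite[Theorem~6.14]{GilbargTrudinger}) thanks to \hyperref[assumption:domain]{\emph{(Dom)}} and strict ellipticity; the $\HoelderSpace{2}{\gamma}$ estimate makes $T$ compact on $\HoelderSpace{0}{\gamma}(\overline{\Omega})$, and the barrier argument above applies uniformly to fixed points of $\tau T$ for $\tau \in [0,1]$, providing exactly the a priori bound Leray--Schauder requires. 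A final Schauder bootstrap on $\InfGen[u] = -f(\cdot, u)$, viewed as a linear equation with Hölder-continuous right-hand side, promotes the resulting fixed point to $\HoelderSpace{2}{\gamma}(\overline{\Omega})$.

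For uniqueness, given two solutions $u_1, u_2$, the difference $w = u_1 - u_2$ vanishes on $\partial\Omega$ and satisfies $\InfGen[w] + c(x)\, w = 0$ with
\[
    c(x) = \int_0^1 \Diff_u f\bigl(x,\, t u_1(x) + (1-t) u_2(x)\bigr)\,\de t \leq 0
\]
by the mean value theorem and \hyperref[assumption:LadyzhenskayaPardoux]{\emph{(PDE)}}; the weak maximum principle for uniformly elliptic operators with non-positive zeroth-order coefficient (cf.\ \cite[Corollary~3.2]{GilbargTrudinger}) then forces $w \equiv 0$. I expect the principal technical obstacle to lie in the a priori $L^\infty$ estimate: only the one-sided sign condition on $f$ is available, so the barriers must be engineered solely from the ellipticity constant of $\sigma\sigma^\top$, the bound $\|\mu\|_\infty$, and the constant $C$, without any additional growth control on $f$ in the non-coercive direction.
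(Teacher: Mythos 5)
Your proposal is correct and differs from the paper primarily in the uniqueness step. For existence, the paper simply invokes \cite[Theorem~15.10]{GilbargTrudinger}; your Leray--Schauder scheme with barrier-based a priori $L^\infty$ bounds and a Schauder bootstrap is essentially a re-derivation of the proof behind that citation, so the content is the same, and your observation that the structure condition $\sign(u)f(x,u)\leq C$ together with uniform ellipticity produces the required a priori bound is exactly what makes the cited theorem applicable under \hyperref[assumption:LadyzhenskayaPardoux]{\emph{(PDE)}}. The genuine divergence is in uniqueness. The paper takes a probabilistic route: it associates to the PDE a BSDE on the random horizon $\tau^x$, uses the uniform exponential moment bound $\sup_{x\in\overline\Omega}\E[\exp(\rho\tau^x)]<\infty$ (a consequence of strict ellipticity and boundedness of $\Omega$) to invoke the BSDE well-posedness results of \cite{PardouxRandomTimeBSDE}, and then concludes via the Feynman--Kac identification $u(x)=Y_0^x$. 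Your argument --- linearizing $f$ through the mean value theorem and applying the weak maximum principle for a uniformly elliptic operator with non-positive zeroth-order coefficient --- is the classical PDE alternative. It is shorter, self-contained within \cite{GilbargTrudinger}, and avoids the BSDE machinery entirely, at the cost of leaning squarely on the sign condition $\Diff_u f\leq 0$; a BSDE-based argument can in principle absorb a merely bounded-above $\Diff_u f$ via the exponential integrability of $\tau^x$, which is presumably why the paper chooses it, and it also sets up the Feynman--Kac correspondence that is reused throughout Section~\ref{section:ProofProbabilisitcRepresentation}. Within the stated standing assumptions, both routes are fully valid.
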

	\begin{proof}	%[Proof of Proposition~\ref{PropExistenceOfPDESolution}]
		By \cite[Theorem 15.10]{GilbargTrudinger} there exists at least one solution $u$ of \eqref{eqnSemilinearPDE} of class $\HoelderSpace{2}{\gamma}(\overline{\Omega})$. Uniqueness follows from a viscosity argument as in \cite[Section 6]{PardouxRandomTimeBSDE}. More precisely, for $x\in\overline{\Omega}$ consider the backward stochastic differential equation (BSDE)
		\begin{align*}
			Y_t^x = g(X_{\tau^x}^x) + \int_{t\wedge\tau^x}^{\tau^x} f(X_s^x, Y_s^x)\de s - \int_{t\wedge\tau^x}^{\tau^x} Z_s^x \de W_s, \qquad t\in[0,\tau^x]
		\end{align*}
		Observe that, by Assumption \hyperref[assumption:holdAll_exp]{\emph{(E)}}, \cite[Theorem 3.4]{PardouxRandomTimeBSDE} applies and hence there is a unique solution $(Y^x,Z^x)$ of the above BSDE for each $x\in\overline{\Omega}$. Finally, \cite[Theorem 6.5]{PardouxRandomTimeBSDE} implies uniqueness via the Feynman-Kac correspondence $u(x) = Y_0^x$, $x\in\Omega$.
	\end{proof}
}
We next present our first main result, a probabilistic representation of the shape derivative $\D u[\dir]$. This may be seen as a semilinear elliptic version of \cite[Theorem 2.2]{Karoui_bdry}, where the parabolic linear case with a bounded terminal time is investigated. While the general strategy of the proof is similar to that in \cite{Karoui_bdry}, several complications arise due to the nonlinearity of \eqref{eqnSemilinearPDE} and the fact that elliptic equations give rise to BSDEs on unbounded time horizons.
\begin{theorem}[Probabilistic Representation of Shape Derivative]\label{theorem:ProbabilisticRepresentationShapeDerivative}
	Let $\dir\in\pertSpace$. Then the shape derivative $\D u[\dir]$ exists, that is for all $x\in\overline{\Omega}$, the map\footnote{Recall that $u_\varepsilon^\dir$ denotes the solution of \eqref{PDE with general elliptic differential operator} where the domain is shifted in direction $\dir$ by $\varepsilon$.} $\varepsilon\mapsto u_\varepsilon^\dir(x)$ is differentiable at $\varepsilon=0$, and we have the probabilistic representation
	\begin{align*}
		\D u[\dir] (x) = \E\bigg[\exp\Big(\int_0^{\tau^x} \Diff_u f\big(X_s^x,u(X_s^x)\big)\de s\Big) \big\langle\nabla u-\nabla g, \dir\big\rangle\big(X_{\tau^x}^x\big)\bigg]. \closeEqn
	\end{align*}
\end{theorem}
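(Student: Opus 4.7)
My plan is to exploit the nonlinear Feynman-Kac representation of both $u_\varepsilon^\dir$ and $u$, compare them pathwise, and identify the limit as $\varepsilon\to 0$. Applying Proposition~\ref{PropExistenceOfPDESolution} on $\pertDomain$ yields $u_\varepsilon^\dir(x) = Y_0^{\varepsilon,x}$, where $(Y^{\varepsilon,x},Z^{\varepsilon,x})$ solves the BSDE with driver $f$, terminal value $g(X_{\tau_\varepsilon^x}^x)$, and random horizon $\tau_\varepsilon^x\defined\inf\{t\geq 0 : X_t^x \notin \pertDomain\}$. Crucially, the pre-image definition $\pertDomain = (T_\varepsilon^\dir)^{-1}(\Omega)$ makes the driving diffusion $X^x$ coincide with the one used for the unperturbed problem; only the stopping rule changes, namely $\tau_\varepsilon^x = \inf\{t : T_\varepsilon^\dir(X_t^x) \notin \Omega\}$. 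This delivers $\tau_\varepsilon^x\to\tau^x$ pathwise as $\varepsilon\to 0$ and is what makes the comparison tractable.

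\textbf{Main steps.} I would first extend $u\in\HoelderSpace{2}{\gamma}(\overline{\Omega})$ to a $\HoelderSpace{2}{\gamma}$-function on a neighborhood of the hold-all $\HoldAll^\dir$ via a standard extension theorem, so that Itô's formula applies to $u(X^x)$ on $[0,\tau_\varepsilon^x]$. Combining Itô with the PDE $\InfGen[u] + f(\cdot,u) = 0$ on $\Omega$ and taking expectations produces
\[
    u(x) = \E\bigg[u(X_{\tau_\varepsilon^x}^x) + \int_0^{\tau_\varepsilon^x} f\big(X_s^x,u(X_s^x)\big)\,\de s\bigg] + R_\varepsilon(x),
\]
with a remainder $R_\varepsilon(x) = o(\varepsilon)$ arising from the (small) portion of the path on which $X_s^x\notin\Omega$. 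Subtracting this from the BSDE representation of $u_\varepsilon^\dir(x)$ yields
\[
    u_\varepsilon^\dir(x) - u(x) = \E\big[(g-u)(X_{\tau_\varepsilon^x}^x)\big] + \E\bigg[\int_0^{\tau_\varepsilon^x}\!\!\big(f(X_s^x,Y_s^{\varepsilon,x}) - f(X_s^x,u(X_s^x))\big)\de s\bigg] + R_\varepsilon(x).
\]

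\textbf{Linearization and boundary expansion.} The integrand is linearized via the mean value theorem, rewriting the second term as $\E[\int_0^{\tau_\varepsilon^x}\alpha_s^\varepsilon(Y_s^{\varepsilon,x}-u(X_s^x))\,\de s]$ with $\alpha_s^\varepsilon\to\Diff_u f(X_s^x,u(X_s^x))$. The resulting linear BSDE for $\Delta\defined u_\varepsilon^\dir - u$ admits the explicit adjoint-exponential solution
\[
    u_\varepsilon^\dir(x) - u(x) = \E\bigg[\exp\Big(\!\int_0^{\tau_\varepsilon^x}\!\alpha_s^\varepsilon\,\de s\Big)(g-u)(X_{\tau_\varepsilon^x}^x)\bigg] + o(\varepsilon).
\]
Since $X_{\tau_\varepsilon^x}^x\in\partial\pertDomain$, the point $T_\varepsilon^\dir(X_{\tau_\varepsilon^x}^x) = X_{\tau_\varepsilon^x}^x + \varepsilon\dir(X_{\tau_\varepsilon^x}^x)$ lies on $\partial\Omega$ where $u = g$, so a first-order Taylor expansion gives $(g-u)(X_{\tau_\varepsilon^x}^x) = \varepsilon\langle\nabla u - \nabla g,\dir\rangle(X_{\tau_\varepsilon^x}^x) + o(\varepsilon)$. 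Dividing by $\varepsilon$, using $\tau_\varepsilon^x\to\tau^x$ a.s.\ and $\alpha_s^\varepsilon\to \Diff_u f(X_s^x,u(X_s^x))$, and passing to the limit by dominated convergence yields the claimed representation.

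\textbf{Main obstacle.} The principal difficulty is uniform control of the BSDE on an \emph{unbounded} random horizon. Unlike the parabolic setting of \cite{Karoui_bdry}, deterministic Gronwall estimates do not suffice; a stochastic Gronwall inequality (as hinted by the paper's keywords) is required, leveraging the sign condition $\Diff_u f\leq 0$ and the exponential integrability $\sup_x\E[\exp(\rho\tau^x)]<\infty$ established in the proof of Proposition~\ref{PropExistenceOfPDESolution} to justify both the linearization and the dominated-convergence step. A secondary technical point is the quantitative control $|\tau_\varepsilon^x - \tau^x| = O(\varepsilon)$ in $L^p$ and of the symmetric difference $\Omega\triangle\pertDomain$, both of which rely on the $\HoelderSpace{2}{\gamma}$-regularity of $\partial\Omega$ and the exterior sphere condition in order to translate the infinitesimal normal motion of the boundary into uniform pathwise exit-time estimates.
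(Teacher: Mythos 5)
Your proposal captures the essential strategy of the paper's proof: exploit the pre-image definition of $\pertDomain$ so that the diffusion $X^x$ is unchanged and only its stopping rule varies; represent $u_\varepsilon^\dir$ via Feynman--Kac/BSDE on the perturbed domain; extend $u$ to a $\C^2$ function on $\HoldAll$; compare the two representations; linearize $f$ in the $u$-variable to produce the exponential weight; Taylor-expand the terminal data at the boundary using that $T_\varepsilon^\dir(X^x_{\tau_\varepsilon^x})\in\partial\Omega$; and close the argument with the stochastic Gronwall inequality and exit-time convergence. All of these ingredients appear in the paper's proof of Theorem~\ref{theorem:StronConvergenceShapeDerivative}, so the two arguments are close cousins. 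The bookkeeping does differ in two places. First, you expand $\tilde u$ along $X^x$ up to $\tau_\varepsilon^x$ and absorb the excursions outside $\Omega$ into $R_\varepsilon$; the paper instead expands $u$ along the \emph{perturbed} process $\pertX = T_\varepsilon^\dir(X^x)$ (Lemma~\ref{lemma:ProofStep1}), which by construction never leaves $\overline{\Omega}$, trading your excursion remainder for a generator-mismatch remainder $\theRemainder$ with the clean uniform bound $|\theRemainder_s|\le C\varepsilon$. Second, you apply the mean value theorem to get a varying intermediate coefficient $\alpha^\varepsilon_s$ and then pass $\alpha^\varepsilon\to\Diff_u f(\cdot,u)$ in the limit; the paper instead \emph{fixes} the exponential weight $\beta$ at $\Diff_u f(\cdot,\tilde u)$ from the start and controls the resulting second-order Taylor remainder of $f$ via the Gronwall estimate (Lemmas~\ref{lemma:ProofStep3} and \ref{lemma:ProofStep4}). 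Both variants cost about the same; neither avoids the stochastic Gronwall.

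Two points deserve more care than you give them. The phrase "this delivers $\tau_\varepsilon^x\to\tau^x$ pathwise" understates the issue: for an inflating perturbation, $\tau_\varepsilon^x$ decreases to $\tau_+^x := \inf\{t : X_t^x\notin\overline{\Omega}\}\ge\tau^x$, and the identity $\tau_+^x=\tau^x$ a.s.\ is not automatic. The paper proves it via the strong Markov property and a Blumenthal $0$-$1$ argument, invoking \cite[Corollary III.3.2]{BassDiffusionsAndEllipticOperators} and the exterior cone condition (Lemma~\ref{lemma:PropertiesPerturbedNew}, Step~1). Your "main obstacle" paragraph mentions the exterior sphere condition only in connection with quantitative estimates, which suggests you may not have noticed that even the qualitative pathwise convergence is delicate here. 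More importantly, you assert the need for $|\tau_\varepsilon^x-\tau^x|=O(\varepsilon)$ in $L^p$. The paper deliberately never proves this: Lemma~\ref{lemma:PropertiesPerturbedNew} gives only $\sup_x\E[|\tau_\varepsilon^x-\tau^x|]\to 0$ (a rate-free $o(1)$), and this suffices because wherever the exit-time gap enters, it is already multiplied by a factor of order $\varepsilon$. Establishing an $O(\varepsilon)$ rate a priori would in fact be circular, as that rate is a \emph{consequence} of Theorem~\ref{theorem:ProbabilisticRepresentationShapeDerivative} (Example~\ref{Example:L1DerivativesOfExitTimes}). Your $R_\varepsilon$ estimate works anyway, since the integrand there vanishes on $\overline{\Omega}$ and is Lipschitz, hence $O(\varepsilon)$ on $\pertDomain\setminus\overline{\Omega}$, yielding $R_\varepsilon = O(\varepsilon)\cdot o(1) = o(\varepsilon)$; but starting from the over-assertion of an $O(\varepsilon)$ rate would derail a serious attempt to fill in the details.
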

\begin{proof}
	The proof, and all auxiliary results required for it, are provided in Section~\ref{section:ProofProbabilisitcRepresentation}; the assertion is then an immediate consequence of Theorem~\ref{theorem:StronConvergenceShapeDerivative}.
\end{proof}    
\begin{remark}\label{remark:ShapeDerivativeContinuous}
	In Theorem~\ref{theorem:StronConvergenceShapeDerivative} we in fact establish a stronger result: The map $\varepsilon\mapsto u_\varepsilon^\dir$ is differentiable at $\varepsilon=0$ in $(\C (\overline{\Omega}),\|\cdot\|_\infty)$, i.e. 
	\begin{align*}
		\lim_{\varepsilon\to 0} \big\|\tfrac{1}{\varepsilon} (u_\varepsilon^\dir - u) - \D u[\dir] \big\|_\infty = 0
	\end{align*}
	for any $\dir\in\pertSpace$.
	In particular the map
	\begin{equation*}
		x \mapsto \D u[\dir] (x)
	\end{equation*}
	is continuous.
	Since the probabilistic representation in Theorem~\ref{theorem:ProbabilisticRepresentationShapeDerivative} further implies that 
	\begin{align*}
		\D u [\dir] \leq \sup_{y\in\overline{\Omega}} \Big|\big\langle(\nabla u-\nabla g) (y), \dir (y) \big\rangle\Big| \leq C \|\dir\|_\infty,
	\end{align*}
	it follows that $\D [u]\colon (\pertSpace, \|\cdot\|_\infty)\rightarrow (\C (\overline{\Omega}),\|\cdot\|_\infty)$ is a bounded, linear operator. 
	\close
\end{remark}

As a direct illustration of Theorem~\ref{theorem:ProbabilisticRepresentationShapeDerivative} in a probabilistic context, we consider $L^1$-derivatives of exit times; these can be regarded as asymptotic extensions of the corresponding $L^1$-bounds provided by \cite{Dokuchaev2004, Dokuchaev2015}.
\begin{example}[$L^1$-derivative of exit times]\label{Example:L1DerivativesOfExitTimes}
	Suppose $\Omega\subseteq\R^d$ satisfies \hyperref[assumption:domain]{\emph{(Dom)}} and consider the problem\footnote{Here we assume that $\mu, \sigma$ satisfy \hyperref[assumption:LadyzhenskayaPardoux]{\emph{(PDE)}}; no further restrictions are imposed.}
	\begin{align*}
		\InfGen [u] + 1 = 0 \quad \text{on}\ \Omega,\qquad u=0\quad \text{on}\ \partial\Omega .
	\end{align*}
	If $x\in\Omega$ and $\dir\in\pertSpace$ is inflating, i.e.\ $\Omega\subseteq\pertDomain$ for all sufficiently small $\varepsilon>0$, then we have
	\begin{align*}
		\lim_{\varepsilon\to 0} \tfrac{1}{\varepsilon}\E\big[|\pertExit - \tau^x|\big] &= \lim_{\varepsilon\to 0} \tfrac{1}{\varepsilon} \big(u_\varepsilon^\dir (x) - u(x)\big) = \E \big[\langle\nabla u,\dir\rangle (X_{\tau^x}^x)\big].
	\end{align*}
	Analogously, if $\dir$ is deflating, i.e.\ $\Omega\supseteq\pertDomain$ for sufficiently small $\varepsilon>0$, then
	\begin{align*}
		\lim_{\varepsilon\to 0} \tfrac{1}{\varepsilon} \E\big[|\pertExit - \tau^x|\big] &= - \lim_{\varepsilon\to 0} \tfrac{1}{\varepsilon} \big(u_\varepsilon^\dir (x) - u(x)\big) = -\E\big[\langle\nabla u,\dir\rangle (X_{\tau^x}^x)\big].\closeEqn
	\end{align*}
\end{example}

The second main result of this article provides a probabilistic representation of the shape derivative $\D\Phi[\dir]$. Before we present this, the following result recalls the well-known connection between the shape derivative $\D u[\dir]$ of $u$ and the shape functional derivative $\D\Phi[\dir]$ as introduced in Definition~\ref{Def:ShapeDerivative}. In the literature, this result is also referred to as the Reynolds transport theorem. Note, however, that in shape calculus domain perturbations are typically defined as images under some perturbation of the identity; see e.g.\ \cite{Delfour_Shape, Sokolowski_Shape, SturmDissertation}. By contrast, in this article the distorted domains $\pertDomain$ are defined as pre-images of those mappings.

For the corresponding identity with reversed perturbations, we refer to \cite[p.2097]{HarbrechtShape}. Moreover, we mention \cite[Theorem 3.3]{Simon1980}, where the result is derived for a shape functional with integrand of the form $\phi (x,u(x)) = \widetilde{\phi}(u(x))$ under rather strong differentiability assumptions, and \cite[Section 4.4.1]{BittnerDissertation} or \cite[Section 2.31]{Sokolowski_Shape}, where it is derived for the shape functional $\phi(x,u(x)) = u(x)$.

The result as used in this paper reads as follows; for completeness, we provide a proof in Appendix~\ref{section:Supplements}.
\begin{proposition}\label{Prop:ShapeDerivativeSplitt}
	For any $\dir\in\pertSpace$ we have
	\[
	\D\Phi[\dir] = \int_\Omega\Dphiu\phi(\cdot,u)\D u[\dir]\dLeb{d} - \int_{\partial\Omega} \big\langle\dir, \phi(\cdot,u)n \big\rangle\dSph{d-1}.\closeEqn
	\]
\end{proposition}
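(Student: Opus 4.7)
The plan is to pull the integral in $\Phi(u_\varepsilon^\dir)$ back to the fixed reference domain $\Omega$ via a change of variables, differentiate under the integral sign, and convert the resulting divergence-type contribution into the desired boundary integral via the divergence theorem.

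First, I substitute $y = T_\varepsilon^\dir(x)$ in
\[
    \Phi(u_\varepsilon^\dir) = \int_{\pertDomain} \phi\big(x, u_\varepsilon^\dir(x)\big)\dLebesgue{d}{x}.
\]
Since $\pertDomain = (T_\varepsilon^\dir)^{-1}(\Omega)$, setting $x_\varepsilon\defined (T_\varepsilon^\dir)^{-1}$ and $J_\varepsilon(y) \defined |\det \Diff x_\varepsilon(y)|$ yields
\[
    \Phi(u_\varepsilon^\dir) = \int_{\Omega} \phi\big(x_\varepsilon(y), u_\varepsilon^\dir(x_\varepsilon(y))\big)\, J_\varepsilon(y)\dLebesgue{d}{y},
\]
an integral over the $\varepsilon$-independent domain $\Omega$ to which a standard ``differentiation under the integral'' argument can be applied.

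Second, I compute the pointwise $\varepsilon$-derivative of the integrand at $\varepsilon=0$. Implicit differentiation of $x_\varepsilon(y) + \varepsilon \dir(x_\varepsilon(y)) = y$ gives $\dot{x}_0(y) = -\dir(y)$, and combining $\Diff T_\varepsilon^\dir = I + \varepsilon \Diff\dir$ with Jacobi's formula yields $\partial_\varepsilon J_\varepsilon(y)\big|_{\varepsilon=0} = -\divergence\dir(y)$. For the $u$-dependent factor I split
\[
    u_\varepsilon^\dir(x_\varepsilon(y)) - u(y) = \big[u_\varepsilon^\dir(x_\varepsilon(y)) - u(x_\varepsilon(y))\big] + \big[u(x_\varepsilon(y)) - u(y)\big];
\]
Remark~\ref{remark:ShapeDerivativeContinuous} handles the first bracket (uniform convergence of $\tfrac{1}{\varepsilon}(u_\varepsilon^\dir - u)$ to the continuous $\D u[\dir]$, together with $x_\varepsilon(y)\to y$), and a first-order Taylor expansion of $u \in \HoelderSpace{2}{\gamma}(\overline{\Omega})$ handles the second, giving in total
\[
    \tfrac{\de}{\de \varepsilon}\Big|_{\varepsilon=0} u_\varepsilon^\dir(x_\varepsilon(y)) = \D u[\dir](y) - \big\langle\nabla u(y), \dir(y)\big\rangle.
\]
Applying the chain rule and the identity $\nabla\big[\phi(\cdot,u)\big] = \nabla_x\phi(\cdot,u) + \Dphiu\phi(\cdot,u)\nabla u$, all terms not involving $\D u[\dir]$ collapse into $-\divergence\big(\phi(\cdot,u)\dir\big)$.

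Third, assuming the interchange of $\partial_\varepsilon$ and $\int_\Omega\cdot\dLebesgue{d}{y}$ is justified, I obtain
\[
    \D\Phi[\dir] = \int_\Omega \Dphiu\phi(\cdot,u)\D u[\dir]\dLeb{d} - \int_\Omega \divergence\big(\phi(\cdot,u)\dir\big)\dLeb{d},
\]
and the divergence theorem, whose hypotheses are met by \hyperref[assumption:domain]{\emph{(Dom)}}, converts the second volume integral into the stated boundary integral. The main obstacle is the interchange of derivative and integral. I expect this to follow from dominated convergence: uniform bounds on $x_\varepsilon$, $\Diff x_\varepsilon$ and $J_\varepsilon$ on $\overline{\Omega}$ for small $\varepsilon$ follow from the $\mathcal{C}^2$-regularity of $\dir$ together with the implicit function theorem, and a uniform bound on the difference quotient of $u_\varepsilon^\dir(x_\varepsilon(y))$ is delivered by the uniform convergence in Remark~\ref{remark:ShapeDerivativeContinuous} together with the Lipschitz continuity of $u$ on $\overline{\HoldAll}$.
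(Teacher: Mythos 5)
Your proposal pulls the integral back to the fixed reference domain $\Omega$ via $x_\varepsilon=(T_\varepsilon^\dir)^{-1}$ and differentiates the reparametrized integrand; the paper instead rewrites $\Phi(u)$ as an integral over the moving domain $\pertDomain$ (by changing variables in the \emph{unperturbed} integral via the forward map $T_\varepsilon^\dir$), splits the difference quotient into four integrals over $\pertDomain$, and tracks the domain variation through $\1_{\pertDomain}\to\1_\Omega$ almost everywhere. Both routes are sound and the algebra matches; yours is the more textbook Reynolds-transport derivation and gives a cleaner differentiation-under-the-integral structure, at the cost of implicit differentiation of the inverse map and of having to control the integrand on $\pertDomain$, which in general protrudes outside $\overline\Omega$.

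The gap is precisely at that last point. The domination you cite for the dominated-convergence step is not quite adequate: Remark~\ref{remark:ShapeDerivativeContinuous} gives uniform convergence of $\tfrac{1}{\varepsilon}(u_\varepsilon^\dir-u)\to\D u[\dir]$ only on $\overline\Omega$, while in your integrand the argument of $u_\varepsilon^\dir$ is $x_\varepsilon(y)\in\pertDomain$, which for $y$ near $\partial\Omega$ can lie in $\pertDomain\setminus\overline\Omega$ for every $\varepsilon\neq 0$. Lipschitz continuity of the extension $\tilde u$ on $\overline{\HoldAll}$ handles the bracket $\tfrac{1}{\varepsilon}\big[\tilde u(x_\varepsilon(y))-u(y)\big]$, but not $\tfrac{1}{\varepsilon}\big[u_\varepsilon^\dir(x_\varepsilon(y))-\tilde u(x_\varepsilon(y))\big]$ when $x_\varepsilon(y)\notin\overline\Omega$. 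What closes this is the estimate $\sup_{x\in\pertDomain}|u_\varepsilon^\dir(x)-\tilde u(x)|\leq C\varepsilon$ uniformly in $\varepsilon$, which the paper obtains from Lemma~\ref{lemma:ProofStep3} at $t=0$ and records as Lemma~\ref{lemma:IntegralConvergenceForDerivativeSplitt}; the paper's own proof of Proposition~\ref{Prop:ShapeDerivativeSplitt} invokes exactly this lemma to dominate the corresponding term over the outer sliver. With that bound added your argument goes through; without it, the interchange of derivative and integral is not justified.
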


Next observe that the inner product in the expectation in Theorem~\ref{theorem:ProbabilisticRepresentationShapeDerivative} is evaluated at the exit time of the diffusion, i.e.\ exclusively at points that are located on the boundary. This motivates a reformulation as a boundary integral.
Informally, this may be interpreted as collecting the information of initial points and trajectories in a scalar weight factor for each boundary point $y\in\partial\Omega$.

To make this precise, we use the theory of doubly stochastic Poisson processes, see e.g.\ \cite[Section II.1]{bremaud1981}, \cite[p. 3-15]{Grandell1976} or \cite[Section 3]{Lawrance1972}. Formally, consider an enlargement of $\partial\Omega\subset\R^d$: Let $\dagger \notin \overline{\HoldAll}$, set 
\begin{align}
	\enOmega \defined \partial\Omega \cup \big\{\dagger\big\}
\end{align}
and extend any map $\varphi\colon\partial\Omega\rightarrow\R$ to $\enOmega$ by setting $\varphi(\dagger)\defined 0$. Since $\dagger$ is isolated, this preserves continuity and smoothness properties. Next let $\zeta\colon\R^d\to[0,\infty)$ be given and introduce a family $\{\enX^x \,|\, x\in\Omega\}$ of $\enOmega$-valued random variables as follows: For each $x\in\Omega$ we set
\begin{align}\label{def - ExitOrKilledRandomVariable}
	\enX^{x} &\defined \enX_{\tau^x}^x = \begin{cases}
		X_{\tau^x}^x,& \qquad \tau^x<\killTime\\
		\,\dagger,& \qquad \tau^x\geq\killTime,
	\end{cases}
\end{align}
where $\enX^x$ denotes the killed process
\begin{align}\label{def - killed process}
	\enX_t^x &\defined \begin{cases}
		X_{t}^x,& \qquad t<\killTime\\
		\,\dagger,& \qquad t\geq\killTime,
	\end{cases} 
\end{align}
and
\[
\killTime\defined\inf\Big\{t\geq0\,\Big|\, \int_0^t \genericStochasticIntensity(X_r)\de r \geq E\Big\}
\]
with $E\sim\text{Exp}(1)$ independent of $\sigma(X_s^x \,|\, s\geq 0)$.
As before, $\tau^x$ denotes the exit time of $X^x$ from $\Omega$.
Thus $\enX^x$ represents the value of the process $X^x$ at the killing time or at the first exit from $\Omega$, whichever happens first; we refer to $\enX^x$ as exit-kill random variables.

We have the following result\footnote{This is a standard result from the theory of doubly stochastic processes. A proof of the first identity can be found in \cite[Lemma 4.1]{JeanblancDefaultRisk}, and a proof of the second in \cite[Lemma 4.3]{JeanblancDefaultRisk}.}.
\begin{lemma}\label{lemma - properties killed process}
	Let $x\in\Omega$ and $t\geq 0$.
	Then we have
	\begin{align}
		\prob\big[\killTime > t\,\big|\, \sigma(X_s^x \,|\, s\geq 0) \big] =&\, \prob\big[\killTime > t\,\big|\, \sigma(X_s^x \,|\, t\geq s\geq 0) \big] =\, \exp\Big(-\int_0^t \genericStochasticIntensity(X_r^x)\de r\Big),\\
		\E \big[\eta (\enX^x)\big] =&\, \E\Big[\exp\Big(-\int_0^{\tau^x} \genericStochasticIntensity(X_r^x) \de r\Big) \eta(X_{\tau^x}^x)\Big],
	\end{align}
	whenever $\eta\colon\R^d\rightarrow\R$ is such that $\E[|\eta(X^x_{\tau^x}|]<+\infty$.\close
\end{lemma}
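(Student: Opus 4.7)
The plan is to exploit two facts baked into the construction: the random variable $E\sim\text{Exp}(1)$ is independent of the path $\sigma$-algebra $\mathcal{G}\defined\sigma(X^x_s\,|\,s\geq 0)$, and the integrated intensity
\[
    A_t\defined\int_0^t \genericStochasticIntensity(X^x_r)\de r
\]
is a continuous, nondecreasing process that is measurable w.r.t.\ the smaller $\sigma$-algebra $\mathcal{G}_t\defined\sigma(X^x_s\,|\,t\geq s\geq 0)$. The definition of $\killTime$ then rewrites the event $\{\killTime>t\}$ as $\{A_t<E\}$, which is the key translation between the two descriptions.

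For the first identity, I would condition on $\mathcal{G}$: since $A_t$ is $\mathcal{G}$-measurable and $E$ is independent of $\mathcal{G}$ with distribution $\text{Exp}(1)$, one has
\[
    \prob[\killTime>t\,|\,\mathcal{G}]=\prob[E>A_t\,|\,\mathcal{G}]=e^{-A_t}.
\]
The right-hand side is $\mathcal{G}_t$-measurable, so the same quantity equals $\prob[\killTime>t\,|\,\mathcal{G}_t]$ by the tower property (and the fact that taking conditional expectation of a $\mathcal{G}_t$-measurable random variable w.r.t.\ $\mathcal{G}_t\subseteq\mathcal{G}$ is the identity). This yields both equalities simultaneously.

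For the second identity, the convention $\eta(\dagger)=0$ gives
\[
    \E[\eta(\enX^x)]=\E\bigl[\eta(X^x_{\tau^x})\mathbf{1}_{\{\tau^x<\killTime\}}\bigr].
\]
Since $\tau^x$ is $\mathcal{G}$-measurable, conditioning on $\mathcal{G}$ and pulling out the $\mathcal{G}$-measurable factor $\eta(X^x_{\tau^x})$ leaves me with $\prob[\killTime>\tau^x\,|\,\mathcal{G}]$. Applied pathwise (i.e.\ with the deterministic value of $\tau^x$ under the regular conditional distribution given $\mathcal{G}$), the argument of the first identity gives $\prob[\killTime>\tau^x\,|\,\mathcal{G}]=e^{-A_{\tau^x}}$; an iterated expectation then produces the claimed formula. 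Integrability of $\eta(X^x_{\tau^x})e^{-A_{\tau^x}}$ is immediate from the hypothesis $\E[|\eta(X^x_{\tau^x})|]<\infty$ because the exponential factor lies in $[0,1]$.

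The only genuine subtlety is the step that replaces the deterministic time $t$ by the random time $\tau^x$ in the first identity. This is essentially a Fubini-type argument: because $E$ is independent of $\mathcal{G}$ and $(t,\omega)\mapsto e^{-A_t(\omega)}$ is jointly measurable, one can evaluate $\prob[E>A_{\tau^x(\omega)}(\omega)\,|\,\mathcal{G}](\omega)$ as $e^{-A_{\tau^x(\omega)}(\omega)}$ for $\prob$-a.e.\ $\omega$. Once this measurability point is settled, the rest is routine. As the paper notes, the two identities also appear as \cite[Lemma 4.1]{JeanblancDefaultRisk} and \cite[Lemma 4.3]{JeanblancDefaultRisk}, which can be cited to shorten the exposition.
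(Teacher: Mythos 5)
Your proposal is correct. The paper itself does not supply a proof of this lemma; it defers entirely to \cite[Lemma 4.1 and Lemma 4.3]{JeanblancDefaultRisk}, and the argument you give is precisely the standard Cox-process (doubly stochastic) argument that underlies those references: rewrite $\{\killTime>t\}=\{A_t<E\}$ using monotonicity and continuity of $A_t=\int_0^t\zeta(X_r^x)\,\de r$, use independence of $E$ from $\mathcal{G}=\sigma(X_s^x\,|\,s\geq 0)$ together with $\mathcal{G}$-measurability of $A_t$ to get $\prob[\killTime>t\,|\,\mathcal{G}]=e^{-A_t}$, observe that this is already $\mathcal{G}_t$-measurable to obtain the second equality by the tower property, and then obtain the second identity by conditioning on $\mathcal{G}$, pulling out $\eta(X^x_{\tau^x})$, and applying the substitution (freezing) step for the $\mathcal{G}$-measurable random time $\tau^x$. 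You correctly flag the only delicate point, the passage from deterministic $t$ to the $\mathcal{G}$-measurable random time $\tau^x$, and the joint-measurability / independence (freezing lemma) justification you sketch is the right one, so the proof is complete and self-contained where the paper merely cites.
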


We define
\[
\Omega^+ \defined \big\{x\in\Omega \,\big|\, \Dphiu\phi\big(x,u(x)\big)\geq 0\big\}, \qquad \Omega^- \defined \Omega\setminus\Omega^+
\]
and the probability densities\footnote{This assumes $\Omega^+$ and $\Omega^-$ both have positive $d$-dimensional Lebesgue measure; otherwise, one part of the construction is void.}
\[
\rho^\pm\colon\Omega^\pm\rightarrow [0,\infty);\quad x\mapsto \frac{\Dphiu\phi\big(x,u(x)\big)}{\int_{\Omega^\pm} \Dphiu\phi\big(r,u(r)\big)\dLebesgue{d}{r}} 
\]
with corresponding probability distributions
\begin{equation}\label{Def:InitialPointDistribution}
	\mu^{\pm}\big[\de x\big]\defined \rho^\pm (x)\dLebesgue{d}{x}
\end{equation}
on $(\Omega^\pm,\B(\Omega^\pm))$.

The second main result of this article now provides the probabilistic representation of the shape derivative $\D\Phi[\dir]$ of $\Phi$. This is the basis for the simulation approach in Sections~\ref{Section:SimulationProbShape} and \ref{Section:TaylorTest} below. 
\begin{theorem}[Probabilistic Representation of Shape Functional Derivative]\label{theorem:ProbabilisticRepresentationShapeFunctionalDerivative}
	Let $\enX^{X_0^\pm}$ denote the exit-kill random variables defined in \eqref{def - ExitOrKilledRandomVariable} with killing intensity $-\Diff_u f(X,u(X))$ and initial distributions $X_0^\pm \sim \mu^\pm$. Then for every $\dir\in\pertSpace$ we have
	\begin{multline*}
		\D\Phi[\dir] = \constant{+} \E\Big[\big\langle \dir,\nabla u-\nabla g\big\rangle \big(\enX^{X_0^+}\big)\Big] -\constant{-}\E\Big[\big\langle\dir ,\nabla u-\nabla g \big\rangle \big(\enX^{X_0^-}\big)\Big]\\
		- \int_{\partial\Omega}\big\langle\dir,\phi\big(\cdot,u\big)n\big\rangle\dSph{d-1}
	\end{multline*}
	where the constants $\constant{\pm} \geq 0$ are given by 
	\[
	\constant{\pm}\defined\pm \int_{\Omega^\pm}\Dphiu\phi\big(x,u(x)\big)\dLebesgue{d}{x}. \closeEqn
	\]
\end{theorem}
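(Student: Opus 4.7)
The plan is to combine the Reynolds-type splitting of Proposition~\ref{Prop:ShapeDerivativeSplitt} with the probabilistic representation of $\D u[\dir]$ from Theorem~\ref{theorem:ProbabilisticRepresentationShapeDerivative} and the killing identity in Lemma~\ref{lemma - properties killed process}. Proposition~\ref{Prop:ShapeDerivativeSplitt} already delivers the boundary integral $-\int_{\partial\Omega}\langle\dir,\phi(\cdot,u)n\rangle\dSph{d-1}$ verbatim, so the task reduces to rewriting the volume integral $\int_\Omega \Dphiu\phi(\cdot,u)\,\D u[\dir]\,\dLeb{d}$ in the claimed form.

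First, I would split the integration domain as $\Omega=\Omega^+\cup\Omega^-$ according to the sign of $\Dphiu\phi(\cdot,u)$. By construction $\constant{\pm}\geq 0$, and the densities $\rho^\pm$ satisfy $\Dphiu\phi(x,u(x))=\pm\constant{\pm}\rho^\pm(x)$ on $\Omega^\pm$, so unpacking the definition of $\mu^\pm$ yields
\begin{align*}
\int_\Omega \Dphiu\phi(\cdot,u)\,\D u[\dir]\,\dLeb{d}
=\constant{+}\E\big[\D u[\dir](X_0^+)\big]-\constant{-}\E\big[\D u[\dir](X_0^-)\big],
\end{align*}
with $X_0^\pm\sim\mu^\pm$. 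The degenerate cases in which $\Omega^+$ or $\Omega^-$ has Lebesgue measure zero are handled by simply omitting the corresponding term.

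Second, I would substitute the representation from Theorem~\ref{theorem:ProbabilisticRepresentationShapeDerivative}. Setting the candidate killing intensity $\genericStochasticIntensity(x)\defined -\Diff_u f(x,u(x))$—which is non-negative thanks to the standing assumption $\Diff_u f\leq 0$ in \hyperref[assumption:LadyzhenskayaPardoux]{\emph{(PDE)}}, so $\genericStochasticIntensity$ is admissible as an intensity—that representation reads
\begin{align*}
\D u[\dir](x)=\E\Big[\exp\Big(-\int_0^{\tau^x}\genericStochasticIntensity(X_r^x)\,\de r\Big)\,\langle\nabla u-\nabla g,\dir\rangle\big(X_{\tau^x}^x\big)\Big].
\end{align*}
Applying Lemma~\ref{lemma - properties killed process} with $\eta=\langle\nabla u-\nabla g,\dir\rangle$ identifies the right-hand side with $\E[\eta(\enX^x)]$. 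The required integrability $\E[|\eta(X_{\tau^x}^x)|]<\infty$ follows from $u,g\in\HoelderSpace{2}{\gamma}(\overline{\Omega})$ together with boundedness of $\dir\in\pertSpace$.

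Finally, a short conditioning argument—using that $X_0^\pm$ is drawn independently of the Brownian motion $W$ and of the exponential clock $E$ underlying the killing—gives $\E[\D u[\dir](X_0^\pm)]=\E[\langle\nabla u-\nabla g,\dir\rangle(\enX^{X_0^\pm})]$, where the Fubini exchange is justified by the uniform bound $|\D u[\dir]|\leq C\|\dir\|_\infty$ noted in Remark~\ref{remark:ShapeDerivativeContinuous}. Combining these identities with Proposition~\ref{Prop:ShapeDerivativeSplitt} yields the theorem. The step requiring the most care, and the reason the formula takes this clean form, is the sign bookkeeping in the definitions of $\constant{\pm}$, $\rho^\pm$, and $\genericStochasticIntensity$: the construction crucially relies on $-\Diff_u f\geq 0$ being a \emph{bona fide} killing intensity, which is precisely why the assumption $\Diff_u f\leq 0$ in \hyperref[assumption:LadyzhenskayaPardoux]{\emph{(PDE)}} is needed here, over and above its role in the proof of Theorem~\ref{theorem:ProbabilisticRepresentationShapeDerivative}.
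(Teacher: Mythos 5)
Your proposal is correct and follows essentially the same route as the paper's proof: combine Proposition~\ref{Prop:ShapeDerivativeSplitt} with Theorem~\ref{theorem:ProbabilisticRepresentationShapeDerivative} and Lemma~\ref{lemma - properties killed process}, split over $\Omega^\pm$, and rescale to the probability measures $\mu^\pm$. The only cosmetic difference is the order of operations (you integrate against $\mu^\pm$ first and then apply the killing lemma, whereas the paper applies the killing lemma pointwise first) and your use of a conditioning/Fubini argument in place of the paper's explicit monotone class argument establishing \eqref{prop distribution killed process}; both deliver the same disintegration identity, and your emphasis on $\Diff_u f\leq 0$ being what makes $-\Diff_u f$ an admissible intensity matches the paper's footnote.
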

\begin{proof}
	Fix $\dir\in\pertSpace$ and recall from Proposition \ref{Prop:ShapeDerivativeSplitt} that 
	\begin{align*}
		\D\Phi[\dir] = \int_\Omega\Dphiu\phi(\cdot,u)\D u[\dir]\dLeb{d} - \int_{\partial\Omega} \big\langle\dir, \phi(\cdot,u)n \big\rangle\dSph{d-1}.
	\end{align*}
	Note in particular that the second summand already takes the asserted form. Concerning the first, the probabilistic representation of $\D u[\dir]$ in Theorem~\ref{theorem:ProbabilisticRepresentationShapeDerivative} and Lemma~\ref{lemma - properties killed process} yield
	\begin{align*}
		& \int_\Omega \Dphiu\phi\big(x,u(x)\big)\D u[\dir](x)\dLebesgue{d}{x}\\
		& \hspace*{0.5cm} = \int_\Omega\Dphiu\phi\big(x,u(x)\big)\E\Big[\beta_{\tau^x}\,\big\langle(\nabla u-\nabla g),\dir\big\rangle (X_{\tau^x}^x)\Big]\dLebesgue{d}{x}\\
		& \hspace*{0.5cm} = \int_\Omega\Dphiu\phi\big(x,u(x)\big)\E\Big[\big\langle\dir,(\nabla u-\nabla g)\big\rangle (\enX^x)\Big]\dLebesgue{d}{x}.
	\end{align*}
	Splitting the integral over $\Omega$ into integrals over $\Omega^\pm$ and rescaling with $\constant{\pm}$, it follows that
	\begin{multline*}
		\int_\Omega\Dphiu\phi\big(x,u(x)\big)\E\Big[\big\langle\dir,(\nabla u-\nabla g)\big\rangle (\enX^x)\Big]\dLebesgue{d}{x}\\
		= \constant{+}\int_{\Omega^+} \E\Big[\big\langle\dir,(\nabla u - \nabla g)\big\rangle\big(\enX^{x}\big)\Big]\mu^+ (\de x) - \constant{-}\int_{\Omega^-} \E\Big[\big\langle\dir,(\nabla u-\nabla g)\big\rangle \big(\enX^{x}\big)\Big]\mu^- (\de x)
	\end{multline*}
	where $\mu^\pm$ are given by \eqref{Def:InitialPointDistribution}.
	The remainder of the argument is analogous for $\mu^+$ and $\mu^-$, so let $\constant{-}=0$. Denoting the distribution of the exit-kill random variables $\enX^{X_0^+}$ by $\nu^{+}$,\footnote{Formally, this is a distribution on $(\enOmega,\B(\enOmega))$, but we only consider measurable sets $A\in\B(\partial\Omega)$.} we have for any $A\in\mathcal{B}(\partial\Omega)$
	\begin{align}\label{def:DistributionExitOrKilledVariablesWithRandomizedStart}
		\nu^+ [A] \defined\, & \prob \Big[\enX^{X_0^+}\in A\Big] = \int_{\Omega^+} \prob\big[\enX^{x}\in A\big]\mu^{+} (\de x).
	\end{align}
	Thus a monotone class argument implies that
	\begin{align}\label{prop distribution killed process}
		\int_{\partial\Omega} \eta(y) \nu^+ (\de y) = \int_{\Omega^+} \E\Big[ \eta(\enX^x)\Big] \mu^{+} (\de x)
	\end{align}
	for every $\eta\in L^1 (\partial\Omega, \nu^+)$ and hence
	\begin{align*}
		\int_{\Omega^+} \E\Big[\big\langle\dir,(\nabla u - \nabla g)\big\rangle\big(\enX^{x}\big)\Big]\mu^+ (\de x) & = \int_{\partial\Omega}\big\langle\dir,\nabla u-\nabla g\big\rangle (y)\,\nu^+(\de y) \\
		& = \E\Big[\big\langle \dir,\nabla u-\nabla g\big\rangle \big(\enX^{X_0^+}\big)\Big].
	\end{align*}
	This completes the proof.
\end{proof}

\begin{remark}
	We briefly recall the well-known connection between the PDE formulations \eqref{PDE Anwender Darstellung} ("convection-diffusion" notation) and \eqref{eqnSemilinearPDE}: Given \eqref{PDE Anwender Darstellung} with $K$ symmetric and positive definite everywhere, for every $x\in\R^d$ there is $\widetilde{\sigma}(x)\in\R^{d,d}$ such that $K(x)=\widetilde{\sigma}(x)\widetilde{\sigma}(x)^\top$. Setting
	\begin{align*}
		\sigma(x)&\defined\sqrt{2}\widetilde{\sigma}(x), & \mu (x)&\defined v(x) + \divergence(K)(x),
	\end{align*}
	yields the equivalent formulation \eqref{eqnSemilinearPDE}.
	We understand that \eqref{PDE Anwender Darstellung} satisfies \hyperref[assumption:LadyzhenskayaPardoux]{\emph{(PDE)}} if the equivalent formulation with $\mu,\sigma$ as defined above satisfy \hyperref[assumption:LadyzhenskayaPardoux]{\emph{(PDE)}}. \close
\end{remark}

%---------------------------------------------------
% 4. Proof of Theorem 3.2
%---------------------------------------------------
\section{Proof of Theorem~\ref{theorem:ProbabilisticRepresentationShapeDerivative}}\label{section:ProofProbabilisitcRepresentation}
Throughout this section we assume that \hyperref[assumption:domain]{\emph{(Dom)}} and \hyperref[assumption:LadyzhenskayaPardoux]{\emph{(PDE)}} are satisfied.

\begin{proposition}\label{PROP:BoundaryRegularity}
	For every perturbation $\dir\in\pertSpace$ and any distortion factor $\varepsilon\in [-\varepsilon_0,\varepsilon_0]$ the perturbed domain $\Omega_\varepsilon^\dir$ is bounded and satisfies an exterior sphere condition.\footnote{For every $y\in\partial\Omega$ exists an open ball $\mathcal{U}$ satisfying $\mathcal{U}\cap\overline{\Omega}=\{y\}$, see e.g.\ \cite[p.27]{GilbargTrudinger}.} \close\end{proposition}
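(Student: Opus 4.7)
My plan is to verify boundedness of $\pertDomain$ by a direct estimate and then to deduce the exterior sphere condition by transferring the corresponding property from $\partial\Omega$ through the map $T_\varepsilon^\dir$, which is a $\C^2$-diffeomorphism for small~$\varepsilon$.

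\textbf{Boundedness.} For any $x\in\pertDomain$ one has $x = T_\varepsilon^\dir(x) - \varepsilon\dir(x)$ with $T_\varepsilon^\dir(x)\in\Omega$. Since $\Omega$ is bounded, $\dir$ is bounded, and $|\varepsilon|\leq \varepsilon_0^\dir$, this immediately yields a uniform bound on $|x|$, and hence boundedness of $\pertDomain$. Note also that $\pertDomain$ is open, as the preimage of the open set $\Omega$ under the continuous map $T_\varepsilon^\dir$.

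\textbf{Diffeomorphism property.} Next, I would argue that $T_\varepsilon^\dir$ is a global $\C^2$-diffeomorphism for all $\varepsilon\in[-\varepsilon_0^\dir,\varepsilon_0^\dir]$. By Remark~\ref{remark:Epsilon0} it is already bijective. Since $\dir\in\pertSpace$ is of class $\C^2$ with bounded first derivatives on the compact set $\overline{\HoldAll^\dir}$, after possibly shrinking $\varepsilon_0^\dir$ (which we may do without loss of generality) one has $\|\varepsilon\,\Diff\dir\|_\infty < 1$, so that $\Diff T_\varepsilon^\dir = I + \varepsilon\,\Diff\dir$ is invertible pointwise. The inverse function theorem then produces a $\C^2$ inverse of $T_\varepsilon^\dir$.

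\textbf{Exterior sphere condition.} Since $(T_\varepsilon^\dir)^{-1}$ is of class $\C^2$ and $\partial\Omega$ is of class $\HoelderSpace{2}{\gamma}$ by \hyperref[assumption:domain]{\emph{(Dom)}}, composing local boundary parametrizations of $\Omega$ with $(T_\varepsilon^\dir)^{-1}$ yields $\C^2$ local parametrizations of $\partial\pertDomain = (T_\varepsilon^\dir)^{-1}(\partial\Omega)$. Consequently $\pertDomain$ has a boundary of class $\C^2$, which by standard results (see e.g.\ \cite[Proposition~10.39]{LeeSmoothManifolds} or the discussion in \cite{GilbargTrudinger}) implies the exterior sphere condition at every boundary point.

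The main obstacle is the third step: one needs to verify that the $\HoelderSpace{2}{\gamma}$ boundary regularity of $\Omega$ passes cleanly through the diffeomorphism to a $\C^2$-regularity of $\partial\pertDomain$, and that this regularity suffices to guarantee an exterior tangent ball at every boundary point. Both reduce to standard facts about pullbacks of $\C^2$-hypersurfaces (a bounded second fundamental form on a compact hypersurface yields a uniform exterior ball radius), but some care is required to handle the interaction of the perturbation of identity with the local boundary charts.
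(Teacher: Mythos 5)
Your proposal follows essentially the same route as the paper's own proof: compute $\Diff T_\varepsilon^\dir = \mathcal{I}+\varepsilon\Diff\dir$, invoke the inverse function theorem to get that $(T_\varepsilon^\dir)^{-1}$ is a $\C^2$ map, transfer the $\HoelderSpace{2}{\gamma}$ boundary charts of $\Omega$ through the shift to obtain $\C^2$ charts of $\partial\pertDomain$, and conclude the exterior sphere condition from $\C^2$ boundary regularity. Two small remarks: (i) you add an explicit boundedness argument, which the paper elides (it follows at once from $\pertDomain\subseteq\HoldAll^\dir$); and (ii) the identity $\partial\pertDomain = (T_\varepsilon^\dir)^{-1}(\partial\Omega)$, which you use without comment, is the content of the paper's Lemma~\ref{lemma:BoundaryPreservation} (a bi-continuous map carries boundaries to boundaries); the paper then verifies explicitly that $\psi^y\defined\psi^x\circ T_\varepsilon^\dir$ (with $x=T_\varepsilon^\dir(y)$) sends $\mathcal{U}^y\cap\pertDomain$ into $\R^d_+$ and $\mathcal{U}^y\cap\partial\pertDomain$ into $\partial\R^d_+$ --- which is exactly the "care required" you flagged in your final paragraph.
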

\begin{proof}
	Recalling Remark \ref{remark:Epsilon0}, we have that $T_\varepsilon^\dir\colon\Omega_\varepsilon^\dir\rightarrow\Omega$ is bijective and of class $\C^2(\R^d)$. Moreover, since
	\[
	\Diff T_\varepsilon^\dir = \mathcal{I}+\varepsilon\Diff\dir
	\]
	the inverse function theorem implies that $(T_\varepsilon^\dir)^{-1}$ is also of class $\mathcal{C}^2$.
	Hence Lemma~\ref{lemma:BoundaryPreservation} implies
	\begin{equation}\label{eqn:proofBdryRegularity}
		(T_\varepsilon^\dir)^{-1}(\partial\Omega) = \partial\big((T_\varepsilon^\dir)^{-1}(\Omega)\big) = \partial\pertDomain.
	\end{equation}
	We proceed by showing that the boundary of $\pertDomain$ is of class $\C^2$ via construction of the corresponding parametrizations; see \cite[p.94]{GilbargTrudinger}. Fix $y\in\partial\pertDomain$ and set $x\defined T_\varepsilon^\dir (y)\in\partial\Omega$, where we used \eqref{eqn:proofBdryRegularity}. Denote by $\psi^x$ the $\HoelderSpace{2}{\gamma}$ parametrization for $x\in\partial\Omega$, which by definition is bijective on an open ball $\mathcal{U}^x$ with center $x$ with inverse of class $\HoelderSpace{2}{\gamma}$. We define
	\[
	\psi^y\colon\mathcal{U}^y \rightarrow \R^d; \quad \psi^y \defined \psi^x\circ T_\varepsilon^\dir 
	\]
	where $\mathcal{U}^y \subseteq (T_\varepsilon^\dir)^{-1}(\mathcal{U}^x)$ is an open ball around $y$. Observing\footnote{Using the half-space notation $\R^d_+\defined \{x\in\R^d\,|\,x_d>0\}$ from \cite[p.9]{GilbargTrudinger}.} the following two inclusions
	\begin{align*}
		\psi^y\big(\mathcal{U}^y\cap\pertDomain\big) &\subseteq \psi^y\big((T_\varepsilon^\dir)^{-1}(\mathcal{U}^x)\cap\pertDomain\big) = \psi^x\big(\mathcal{U}^x\cap\Omega\big)\subseteq\R^d_+\\
		\psi^y\big(\mathcal{U}^y\cap\partial\pertDomain\big) & \subseteq  \psi^y\big((T_\varepsilon^\dir)^{-1}(\mathcal{U}^x)\cap (T_\varepsilon^\dir)^{-1}(\partial\Omega)\big) = \psi^x \big(\mathcal{U}^x\cap\partial\Omega\big)\subseteq\partial\R^d_+
	\end{align*}
	where for the second inclusion we used \eqref{eqn:proofBdryRegularity}. It follows that $\Omega_\varepsilon^\dir$ has boundary of class $\mathcal{C}^2$, hence in particular satisfies an exterior sphere condition.
\end{proof}

\begin{proposition}\label{PROP:ExistenceLadyzhenskaya}
	The PDE \eqref{eqnSemilinearPDE} admits a unique solution $u_\varepsilon^\dir$ of class $\C(\overline{\pertDomain})\cap\C^2(\pertDomain)$ on the perturbed domain $\pertDomain$.\close
\end{proposition}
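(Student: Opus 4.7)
The plan is to mirror the two-step proof of Proposition~\ref{PropExistenceOfPDESolution}, now carried out on the perturbed domain $\pertDomain$ rather than $\Omega$. The first step is to verify that the hypotheses of \hyperref[assumption:LadyzhenskayaPardoux]{\emph{(PDE)}} continue to hold after the perturbation, so that a Gilbarg-Trudinger-type existence result applies; the second step is to repeat the Pardoux-style BSDE argument to obtain uniqueness.

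For existence, the key observation is that the data $\mu,\sigma,f,g$ are globally defined objects whose regularity is unaffected by the deformation of the domain. Strict ellipticity of $\sigma\sigma^\top$ on $\Omega$ extends by continuity to uniform ellipticity on the hold-all $\overline{\HoldAll}\supset\overline{\pertDomain}$ (shrinking $\varepsilon_0^\dir$ from Remark~\ref{remark:Epsilon0} if necessary), the sign condition on $f$ is global, and the restriction of $g$ to $\partial\pertDomain$ retains its $\HoelderSpace{2}{\gamma}$-regularity on the compact set $\overline{\HoldAll}$. Combined with the boundary regularity of $\pertDomain$ established in Proposition~\ref{PROP:BoundaryRegularity}, one then invokes \cite[Theorem 15.10]{GilbargTrudinger} (or its barrier/Perron variant adapted to a $\C^2$-boundary) to produce a classical solution $u_\varepsilon^\dir$ of the asserted regularity $\C(\overline{\pertDomain})\cap\C^2(\pertDomain)$; interior $\HoelderSpace{2}{\gamma}$-regularity is automatic from Schauder theory applied to the linearized equation.

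For uniqueness, I would transcribe the viscosity/BSDE argument used for Proposition~\ref{PropExistenceOfPDESolution}. For $x\in\overline{\pertDomain}$ set $\tau_\varepsilon^{\dir,x}\defined\inf\{t\geq 0\mid X_t^x\notin\pertDomain\}$. Boundedness of $\pertDomain$ together with uniform ellipticity of $\sigma\sigma^\top$ on $\overline{\HoldAll}$ yields $\sup_{x\in\overline{\pertDomain}}\E[\exp(\rho\tau_\varepsilon^{\dir,x})]<\infty$ for every $\rho>0$, so \cite[Theorem 3.4]{PardouxRandomTimeBSDE} produces a unique solution of the BSDE associated with $(\InfGen,f,g,\pertDomain)$ on the random horizon $[0,\tau_\varepsilon^{\dir,x}]$, and \cite[Theorem 6.5]{PardouxRandomTimeBSDE} converts this into uniqueness of $u_\varepsilon^\dir$ via $u_\varepsilon^\dir(x)=Y_0^x$.

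The main obstacle I anticipate is the subtle regularity of $\partial\pertDomain$: since $\dir$ is only assumed $\C^2$, and not $\HoelderSpace{2}{\gamma}$, the composition $\psi^x\circ T_\varepsilon^\dir$ constructed in the proof of Proposition~\ref{PROP:BoundaryRegularity} yields a $\C^2$-boundary but not a full $\HoelderSpace{2}{\gamma}$-boundary, so the hypotheses of \cite[Theorem 15.10]{GilbargTrudinger} are not met verbatim. The expected resolution is to rely on the explicitly stated exterior sphere condition, which is enough for the Perron barrier construction to yield a continuous-up-to-the-boundary solution, and to combine this with interior Schauder estimates for the semilinear equation to obtain $\C^2$-regularity inside $\pertDomain$. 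This produces exactly the regularity class claimed, without requiring the boundary to be $\HoelderSpace{2}{\gamma}$.
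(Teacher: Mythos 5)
Your proposal is essentially the paper's proof. You correctly identify that $\partial\pertDomain$ is only $\C^2$ rather than $\HoelderSpace{2}{\gamma}$, so \cite[Theorem 15.10]{GilbargTrudinger} (used for $\Omega$ itself in Proposition~\ref{PropExistenceOfPDESolution}) does not apply verbatim, and that the exterior sphere condition from Proposition~\ref{PROP:BoundaryRegularity} together with a Perron/barrier argument gives the claimed regularity $\C(\overline{\pertDomain})\cap\C^2(\pertDomain)$; this is exactly the content of \cite[Theorem 15.18]{GilbargTrudinger}, which the paper invokes directly, and your BSDE uniqueness step is identical to the paper's.
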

\begin{proof}
	By Proposition \ref{PROP:BoundaryRegularity} the perturbed domain $\pertDomain$ satisfies an exterior sphere condition. Thus, \cite[Theorem 15.18]{GilbargTrudinger} yields existence of a solution $u_\varepsilon^\dir$ of class $\C(\overline{\pertDomain})\cap\C^2(\pertDomain)$. Uniqueness follows from the same viscosity argument as used in the proof of Proposition \ref{PropExistenceOfPDESolution}.
\end{proof}

Similar to \cite{Karoui_bdry}, for each $\varepsilon$ and $\dir\in\pertSpace$ we introduce the perturbed process
\begin{equation}\label{DEf:PerturbedProcess}
	\pertX \defined X^{\varepsilon,\dir,x}\defined T_\varepsilon^\dir (X^x) = X^x+\varepsilon\dir(X^x)
\end{equation}
as well as the associated first exit times
\begin{equation}\label{def:PertubedExit}
	\pertExit \defined \tau_\varepsilon^{\dir,x} \defined \inf \big\{t\geq 0\,|\ X^x_t \notin \pertDomain\big\}.
\end{equation}
We collect some properties of the perturbed process and exit times in the following lemma.

\begin{lemma}\label{lemma:PropertiesPerturbedNew}
	For any $x\in\overline{\Omega}$ the perturbed exit time satisfies
	\begin{equation*}
		\pertExit = \inf\big\{t\geq 0\,|\ \pertX_t \notin\Omega\big\}.
	\end{equation*}
	Moreover, we have
	\begin{equation*}
		\LimSup\sup_{t\geq 0} |\pertX_t - X_t^x| = 0
	\end{equation*}
	as well as
	\begin{equation*}
		\LimSup \E\big[|\pertExit-\tau^x|\big] = 0. \closeEqn
	\end{equation*}
\end{lemma}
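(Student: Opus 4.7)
The first two assertions are essentially definitional. Since $\pertDomain=(T_\varepsilon^\dir)^{-1}(\Omega)$, we have $X_t^x\in\pertDomain$ iff $\pertX_t=T_\varepsilon^\dir(X_t^x)\in\Omega$, and taking infima yields the first identity. The second follows from $\pertX_t-X_t^x=\varepsilon\dir(X_t^x)$ together with the fact that $\dir\in\pertSpace$ is bounded on $\R^d$, so $\sup_{t\geq 0}|\pertX_t-X_t^x|\leq|\varepsilon|\|\dir\|_\infty\to 0$.

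The $L^1$-convergence of the exit times is the substantive claim. My plan is to first establish almost sure convergence $\pertExit\to\tau^x$ and then upgrade via a uniform domination. For the lower bound $\liminf_{\varepsilon\to 0}\pertExit\geq\tau^x$, I would fix $\omega$ and $\delta>0$ and use that the continuous image $X^x([0,\tau^x-\delta])$ is a compact subset of the open set $\Omega$, hence has strictly positive distance $\eta$ from $\partial\Omega$; for $|\varepsilon|\|\dir\|_\infty<\eta$, the perturbed trajectory $\pertX$ stays inside $\Omega$ on $[0,\tau^x-\delta]$, so by the first assertion $\pertExit>\tau^x-\delta$. For the upper bound $\limsup_{\varepsilon\to 0}\pertExit\leq\tau^x$, I would exploit the regularity of the boundary point $X_{\tau^x}^x$ guaranteed by the exterior sphere condition from \hyperref[assumption:domain]{\emph{(Dom)}} together with the non-degeneracy of $\sigma\sigma^\top$ from \hyperref[assumption:LadyzhenskayaPardoux]{\emph{(PDE)}}. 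By Blumenthal's 0-1 law applied via the strong Markov property at $\tau^x$, $X^x$ almost surely visits $\R^d\setminus\overline{\Omega}$ in every right neighborhood of $\tau^x$; picking some $s\in(\tau^x,\tau^x+\delta)$ with $X_s^x$ at positive distance from $\overline{\Omega}$ and taking $|\varepsilon|$ small enough forces $T_\varepsilon^\dir(X_s^x)\notin\Omega$, i.e.\ $X_s^x\notin\pertDomain$, whence $\pertExit\leq s<\tau^x+\delta$.

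To pass from the a.s.\ statement to $L^1$-convergence, I would dominate $\pertExit$ uniformly in $\varepsilon$ by the first exit time $\tau^x_\HoldAll$ of $X^x$ from the bounded hold-all $\HoldAll$; since $\pertDomain\subseteq\HoldAll$ this is immediate from the definition of $\pertExit$, and the same inclusion gives $\tau^x\leq\tau^x_\HoldAll$. For $\varepsilon_0$ sufficiently small, continuity of $\sigma\sigma^\top$ propagates the strict ellipticity from $\overline{\Omega}$ onto $\HoldAll$, and combined with boundedness of $\HoldAll$, standard estimates (e.g.\ via a Feynman-Kac argument on a quadratic barrier function) yield $\E[\tau^x_\HoldAll]<\infty$. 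Dominated convergence applied to $|\pertExit-\tau^x|\leq 2\tau^x_\HoldAll$ then delivers $\E[|\pertExit-\tau^x|]\to 0$.

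I expect the main obstacle to be the upper bound in the a.s.\ convergence, since the lower bound is purely pathwise while the upper bound genuinely requires both the geometric regularity of $\partial\Omega$ and the non-degeneracy of the diffusion in order to guarantee that $X^x$ does not linger along the boundary. The domination step and the passage to $L^1$ are then routine.
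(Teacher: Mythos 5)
Your treatment of the first two assertions is fine, and your pathwise argument for almost-sure convergence $\pertExit\to\tau^x$ is essentially the same as the paper's: the lower bound is a compactness/positive-distance argument, and the upper bound uses the external cone condition together with a Blumenthal $0$-$1$ argument via the strong Markov property at $\tau^x$. However, there is a genuine gap in the final step: the third conclusion of the lemma is a \emph{uniform} (in $x\in\overline{\Omega}$) convergence statement, i.e.\ $\LimSup$ denotes $\lim_{\varepsilon\to 0}\sup_{x}$. This uniformity is actively used later in the paper (e.g.\ in the proof of Theorem~\ref{theorem:StronConvergenceShapeDerivative}, where the bound $\E[|\beta_{\pertExit}-\beta_{\tau^x}|]\leq 1-\exp\big(-C\sup_{y\in\Omega}\E[|\tau_\varepsilon^y-\tau^y|]\big)$ is invoked). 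Your dominated-convergence step, $|\pertExit-\tau^x|\leq 2\tau^x_{\HoldAll}$ together with a.s.\ convergence, only yields $\E[|\pertExit-\tau^x|]\to 0$ for each \emph{fixed} $x$; it does not control the supremum over $x$.

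The paper closes this gap by sandwiching $\pertExit$ between two auxiliary exit times $\tau_{\varepsilon,-}\leq\pertExit\leq\tau_{\varepsilon,+}$ from fattened/shrunken domains $\Omega^{\varepsilon,\pm}$. These brackets are \emph{monotone} in $\varepsilon$, so the function $h_\varepsilon(x)=\E[\tau_{\varepsilon,+}^x-\tau_{\varepsilon,-}^x]$ decreases pointwise to $0$ on the compact set $\overline{\Omega}$; together with continuity of $x\mapsto h_\varepsilon(x)$ (from \cite[Proposition 5.76]{Pardoux2014}), Dini's theorem then delivers uniform convergence, and hence the uniform bound on $\E[|\pertExit-\tau^x|]$. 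The quantity $\E[|\pertExit-\tau^x|]$ itself is not monotone in $\varepsilon$, so Dini cannot be applied to it directly, and without monotonicity pointwise convergence of continuous functions on a compact set does not imply uniform convergence. You would need to introduce some analogue of the bracketing domains, or otherwise argue the required equicontinuity/uniformity in $x$, before the lemma is fully proved.
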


\begin{proof}
	The alternative representation of $\pertExit$ follows directly from the definitions of $\pertX$, $\pertDomain$ and $\pertExit$ via
	\begin{equation*}
		\inf\big\{t\geq 0\,|\ \pertX_t \notin \Omega\big\} = \inf\big\{t\geq 0\,|\ T_\varepsilon^\dir(X^x_t) \notin \Omega\big\} = \inf\big\{t\geq 0\,|\ X^x_t \notin \big(T_\varepsilon^\dir\big)^{-1}(\Omega)\big\} = \pertExit
	\end{equation*}
	and the first convergence statement is immediate since for every $t\geq 0$ we have
	\begin{equation*}
		\pertX_t - X_t^x = \varepsilon\dir(X_t^x)
	\end{equation*}
	where $\dir$ is uniformly bounded.
	
	In order to establish the uniform $L^1$-convergence of exit times, we introduce auxiliary exit times $\tau_{\varepsilon,\pm}$ such that $\tau_{\varepsilon,-}\leq \pertExit\leq \tau_{\varepsilon,+}$ and demonstrate that their a.s.\ limit is $\tau^x$, where the convergence is monotone. Using a continuity argument and Dini's convergence theorem we infer the desired uniform convergence.
	
	We start by introducing the auxiliary exit times. Fix $x\in\overline{\Omega}$ and for ease of notation write $\tau_{\varepsilon}\defined\pertExit$ and $\tau\defined\tau^x$. Let $V_{\max} \defined \sup_{y\in\R^d}\|V(y)\|$ and introduce the sets
	\begin{align*}
		\Omega^{\varepsilon,+} \defined \big\{y\in\R^d\,|\, \dist(y,\Omega)\leq \varepsilon V_{\max}\big\} \quad\text{and}\quad \Omega^{\varepsilon,-} \defined \big\{y\in\Omega\,|\, \dist(y,\partial\Omega)\geq \varepsilon V_{\max}\big\}
	\end{align*}
	so by construction $\Omega^{\varepsilon,-}\subseteq\Omega\subseteq\overline{\Omega}\subseteq\Omega^{\varepsilon,+}$ as well as $\Omega^{\varepsilon,-}\subseteq\pertDomain\subseteq\Omega^{\varepsilon,+}$. This implies in particular that $\tau_{\varepsilon,-}\leq\tau_{\varepsilon}\leq\tau_{\varepsilon,+}$.
	
	\step{1} We show $\tau_{\varepsilon,\pm}\rightarrow\tau$ a.s. as well as in $L^p$ for any $p\geq 1$. 
	
	We first show that $\tau_{\varepsilon,-}\rightarrow\tau$ a.s. Since $\Omega^{\varepsilon_2,-}\subseteq\Omega^{\varepsilon_1,-}$ whenever $\varepsilon_1\leq\varepsilon_2$ it follows that $\tau_{\varepsilon_2,-}\leq\tau_{\varepsilon_1,-}$, i.e.\ $(\tau_{1/n,-})_n$ is an increasing sequence with upper bound $\tau$. Thus 
	\begin{equation*}
		\tau_- \defined \lim_{\varepsilon\rightarrow 0} \tau_{\varepsilon,-}\leq\tau.
	\end{equation*}
	By definition of $\tau_{\varepsilon,-}$ we have $0\leq\dist (X_{\tau_{\varepsilon,-}}^x, \partial\Omega)\leq\varepsilon V_{\max}$ and since $X^x$ has continuous paths, it follows that $\dist(X_{\tau_-}^x,\partial\Omega)=0$. Thus $X_{\tau_-}^x\in\partial\Omega$, whereas by definition of $\tau=\tau^x$ we have $\tau\leq\tau_-$; we conclude that $\tau_-=\tau$ a.s. 
	
	Next, to show that $\tau_{\varepsilon,+}\rightarrow\tau$ a.s.\ we set
	\begin{equation*}
		\tau_+ \defined\inf\big\{t\geq 0\,|\, X_t^x\notin\overline{\Omega}\big\}.    
	\end{equation*}
	As above, since $\Omega^{\varepsilon_1,+}\subseteq\Omega^{\varepsilon_2,+}$ for $\varepsilon_1\leq\varepsilon_2$ it follows that $(\tau_{1/n,+})_n$ is a decreasing sequence with lower bound $\tau^x$ and
	\begin{equation*}
		\lim_{\varepsilon\rightarrow 0} \tau_{\varepsilon ,+} = \inf_{\varepsilon >  0} \tau_{\varepsilon ,+} = \tau_+,
	\end{equation*}
	where the second identity is due to continuity of the paths of $X^x$ and
	\begin{equation*}
		\big\{X_t^x \notin \Omega^{\varepsilon,+}\big\} = \big\{\dist(X_t^x, \Omega) > \varepsilon V_{\max}\big\}.
	\end{equation*}
	We have established $\tau_+ \geq \tau$, and proceed by showing that the latter estimate a.s.~holds with equality. For this we use the strong Markov property\footnote{The solution of the forward SDE \eqref{Def:ForwardDiffusion} has the strong Markov property, since the coefficients $\mu$ and $\sigma$ are globally Lipschitz and bounded, see e.g.~\cite[Theorem 4.20]{KaratzasShreve}.} of $X^x$ and a Blumenthal $0$-$1$ argument. Let $(\mathcal{C}([0,\infty),\R^d),\B(\mathcal{C}([0,\infty),\R^d)))$ denote the path space and denote by $\pi$ the canonical projection process, i.e.
	\[
	\pi\colon [0,\infty)\times\mathcal{C}([0,\infty),\R^d) \rightarrow \R^d;\ (t,x)\mapsto x_t,
	\]
	with this we can compute
	\begin{align*}
		\prob[\tau =\tau_+] &= \E\Big[\prob_\tau[\tau=\tau_+]\Big] \\
		&= \E\Big[\prob_\tau\big[\text{For all }\varepsilon >0\text{ exists }t\in(0,\varepsilon)\text{ s.t. } X_{\tau+t}^x\notin \overline{\Omega} \big]\Big]\\
		&= \E\Big[\prob^{X_{\tau}^x}\big[\text{For all }\varepsilon >0\text{ exists }t\in(0,\varepsilon)\text{ s.t. } \pi_t \notin \overline{\Omega} \big]\Big]
	\end{align*}
	where in the third step we used the strong Markov property. Since $\Omega$ satisfies an external cone condition, for every $z\in\partial\Omega$ there is a cone $C_z$ with $\overline{\Omega}\cap C_z = \{z\}$. Denote by $\tau_{C_z}$ the first time $X^x$ leaves $\R^d\setminus C_z$, then
	\begin{align*}
		\prob^z \big[\text{For all }\varepsilon >0\text{ exists }t\in(0,\varepsilon)\text{ s.t. } \pi_t \notin \overline{\Omega}\big] & \geq \prob^z \big[\text{For all }\varepsilon >0\text{ exists }t\in(0,\varepsilon)\text{ s.t. } \pi_t \in C_z\big] \\ 
		&= \prob^z \big[\tau_{C_z} =0\big] = 1,
	\end{align*}
	%	UUU		UPDATE
	%\newpage
	where the last step is due to \cite[Corollary III.3.2]{BassDiffusionsAndEllipticOperators} and we make use of the fact that the complement of the cone $\R^d\setminus C_z$ satisfies an external cone condition. To establish convergence in $L^p$, set
	\begin{equation*}
		\Omega^* \defined \Omega^{\varepsilon_0,+} \cup \Omega^{\varepsilon_0,+}  
	\end{equation*}
	and note that $\Omega_{\varepsilon,\pm}\subset\Omega^*$ for any $\varepsilon\in [-\varepsilon_0,\varepsilon_0]$. Letting $\tau^*$ denote the first exit time of $X^x$ from $\Omega^*$, we have $\tau_{\varepsilon,\pm}\leq\tau^*$ and $\E[(\tau^*)^p]<\infty$, hence dominated convergence implies convergence in $L^p$ for any $p\geq 1$.
	
	\step{2} We demonstrate that
	\begin{equation*}
		\LimSup \E\big[|\pertExit-\tau^x|\big] = 0.
	\end{equation*}
	
	By \cite[Proposition 5.76]{Pardoux2014} for every $x\in\overline{\Omega}$ and any sequence $(x_n)_n\subset\overline{\Omega}$ s.t.\ $x_n\to x$ there exists $N_x \in \mathfrak{A}$ with $\prob [N_x] = 0$ such that $\tau_{\varepsilon,\pm}^{x_n} \to \tau_{\varepsilon,\pm}^x$ as $n\to\infty$ outside of $N_x$. Thus, for every $\varepsilon\in[-\varepsilon_0,\varepsilon_0]$ the map
	\begin{equation*}
		h_\varepsilon \colon \overline{\Omega} \rightarrow [0,\infty),\quad
		x \mapsto \E\big[\tau_{\varepsilon,+}^x - \tau_{\varepsilon,-}^x \big]
	\end{equation*}
	is continuous. The construction of $\tau_{\varepsilon,\pm}$ immediately implies that $h_\varepsilon$ is non-negative. Let $(\varepsilon_k)_k \subseteq [-\varepsilon_0,\varepsilon_0]$ be a monotone vanishing sequence. By the first step
	\begin{equation*}
		h_{\varepsilon_k} (x) = \E\big[\tau_{\varepsilon_k,+}^x - \tau_{\varepsilon_k,-}^x \big] = \E\big[\tau_{\varepsilon_k,+}^x - \tau^x\big] + \E \big[\tau^x - \tau_{\varepsilon_k,-}^x \big] \to 0
	\end{equation*}
	for every $x\in\overline{\Omega}$. Hence Dini's convergence theorem, see \cite[Theorem 7.13]{Rudin1976}, implies that $(h_\varepsilon)_\varepsilon$ converges uniformly.
	Hence
	\begin{equation*}
		\LimSup \E\big[|\pertExit-\tau^x|\big] 
		\leq \LimSup \E\big[\tau_{\varepsilon,+}^x - \tau_{\varepsilon,-}^x]
		\leq \LimSup \big|h_\varepsilon (x)\big| = 0 . \qedhere
	\end{equation*}
\end{proof}

\begin{theorem}\label{theorem:StronConvergenceShapeDerivative}
	For any $\dir\in\pertSpace$ we have
	\begin{align*}
		\LimSup\bigg|\frac{1}{\varepsilon}\Big(u_\varepsilon^\dir (x)-u(x)\Big) - \E\Big[\,\beta_{\tau^x}\big\langle\nabla u-\nabla g, \dir\big\rangle (X_{\tau^x}^x)\Big]\bigg| = 0
	\end{align*}
	where $u_\varepsilon^\dir (x)\defined g(x)$ for $x\in\Omega\setminus\pertDomain$ and
	\begin{equation*}
		\beta_t \defined \exp\Big(\int_0^t \Diff_u f\big(X_s^x,u(X_s^x)\big)\de s\Big),\quad t\in[0,\tau^x].\closeEqn
	\end{equation*}
\end{theorem}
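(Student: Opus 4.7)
The plan is to derive a linearized Feynman--Kac identity for $u_\varepsilon^\dir - u$ via Itô's formula, and then to extract the leading order by second-order Taylor expansions at $\partial\Omega$ and $\partial\pertDomain$. First, I would apply Itô's formula to $w_t \defined (u_\varepsilon^\dir - u)(X^x_t)$ on the random interval $[0, \tau^x \wedge \pertExit]$, during which $X^x$ remains in the common domain $\Omega \cap \pertDomain$ where both PDEs hold. Combining this with the mean-value linearization
\[
    f(X^x_s, u_\varepsilon^\dir(X^x_s)) - f(X^x_s, u(X^x_s)) = L^\varepsilon_s \, w_s, \qquad L^\varepsilon_s \defined \int_0^1 \Diff_u f\bigl(X^x_s,\, \lambda u_\varepsilon^\dir(X^x_s) + (1-\lambda) u(X^x_s)\bigr) \de\lambda,
\]
and then applying Itô's formula to $\beta^\varepsilon_t w_t$ with $\beta^\varepsilon_t \defined \exp\bigl(\int_0^t L^\varepsilon_s \de s\bigr)$ cancels the drift, showing that $\beta^\varepsilon w$ is a local martingale on $[0, \tau^x \wedge \pertExit]$. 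After standard integrability arguments this yields the fundamental identity
\[
    u_\varepsilon^\dir(x) - u(x) = \E\bigl[\beta^\varepsilon_{\tau^x \wedge \pertExit}\,(u_\varepsilon^\dir - u)(X^x_{\tau^x \wedge \pertExit})\bigr].
\]

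Next, I would analyze the terminal term case-by-case. On the event $\{\pertExit \le \tau^x\}$, the point $y \defined X^x_{\pertExit}$ lies in $\partial\pertDomain$, so $u_\varepsilon^\dir(y) = g(y)$, while $T_\varepsilon^\dir(y) \in \partial\Omega$ gives $u(T_\varepsilon^\dir(y)) = g(T_\varepsilon^\dir(y))$. A second-order Taylor expansion of both $u$ and $g$ centred at $y$ along the segment to $T_\varepsilon^\dir(y) = y + \varepsilon\dir(y)$ yields
\[
    (u_\varepsilon^\dir - u)(y) = \varepsilon \,\langle \nabla u - \nabla g, \dir\rangle(y) + O(\varepsilon^2),
\]
with remainder uniform in $\omega$ and $x$ thanks to $u, g \in \HoelderSpace{2}{\gamma}$ (using a standard $\HoelderSpace{2}{\gamma}$-extension of $u$ beyond $\overline{\Omega}$ to $\overline{\HoldAll}$). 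The symmetric event $\{\pertExit > \tau^x\}$ is treated analogously by expanding at $(T_\varepsilon^\dir)^{-1}(X^x_{\tau^x}) \in \partial\pertDomain$ and invoking the boundary condition for $u_\varepsilon^\dir$. Dividing by $\varepsilon$ and passing to the limit using continuity of $\Diff_u f$ together with dominated convergence (noting $|\beta^\varepsilon_t| \le 1$ since $\Diff_u f \le 0$), $L^1$-convergence of exit times from Lemma~\ref{lemma:PropertiesPerturbedNew}, and path continuity of $X^x$ delivers the pointwise identity
\[
    \lim_{\varepsilon \to 0} \tfrac{1}{\varepsilon}\bigl(u_\varepsilon^\dir(x) - u(x)\bigr) = \E\bigl[\beta_{\tau^x} \langle \nabla u - \nabla g, \dir\rangle(X^x_{\tau^x})\bigr].
\]

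To upgrade pointwise convergence to the claimed uniform convergence in $x \in \overline{\Omega}$, I would argue that the family of error maps $x \mapsto |\tfrac{1}{\varepsilon}(u_\varepsilon^\dir(x) - u(x)) - \E[\beta_{\tau^x} \langle \nabla u - \nabla g, \dir\rangle(X^x_{\tau^x})]|$ is continuous on the compact set $\overline{\Omega}$ and converges pointwise to $0$; a Dini-type argument, in the spirit of the conclusion of the proof of Lemma~\ref{lemma:PropertiesPerturbedNew}, then promotes this to uniform convergence. The principal technical obstacle lies in the second case $\pertExit > \tau^x$: the Taylor expansion at $(T_\varepsilon^\dir)^{-1}(X^x_{\tau^x})$ produces $\nabla u_\varepsilon^\dir$ rather than $\nabla u$, so passing to the limit requires uniform convergence $\nabla u_\varepsilon^\dir \to \nabla u$ near $\partial\Omega$. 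This rests on uniform Schauder estimates for the family of PDEs on $\pertDomain$, which demand a quantitative refinement of Proposition~\ref{PROP:BoundaryRegularity} in which the $\HoelderSpace{2}{\gamma}$-parametrizations of $\partial\pertDomain$ have constants bounded independently of $\varepsilon$. A secondary concern is uniformity of the $O(\varepsilon^2)$ remainders over $\omega$ and $x$, which follows from uniform boundedness of $\|u_\varepsilon^\dir\|_{\HoelderSpace{2}{\gamma}(\overline{\pertDomain})}$, $\|u\|_{\HoelderSpace{2}{\gamma}(\overline{\HoldAll})}$, $\|g\|_{\HoelderSpace{2}{\gamma}}$ and $\|\dir\|_{C^2}$.
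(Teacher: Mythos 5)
Your proposal takes a genuinely different route from the paper — linearize the difference $w=u_\varepsilon^\dir-u$ directly along $X^x$, stop at $\tau^x\wedge\pertExit$, and Taylor-expand the terminal value — but there is a gap, which you partly flag yourself, that the paper's decomposition is specifically engineered to avoid.

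On the event $\{\pertExit>\tau^x\}$ you need $\nabla u_\varepsilon^\dir$ at the boundary point $(T_\varepsilon^\dir)^{-1}(X^x_{\tau^x})\in\partial\pertDomain$. Proposition~\ref{PROP:ExistenceLadyzhenskaya} only gives $u_\varepsilon^\dir\in\C(\overline{\pertDomain})\cap\C^2(\pertDomain)$, i.e.\ no differentiability up to $\partial\pertDomain$, so this evaluation may not even be well-defined. The obstacle is not merely a lack of \emph{uniform} Schauder estimates, but that the basic $\HoelderSpace{2}{\gamma}(\overline{\pertDomain})$ regularity is unavailable under the standing assumptions: $\dir\in\pertSpace$ is required only to be of class $\C^2$, so Proposition~\ref{PROP:BoundaryRegularity} delivers $\partial\pertDomain$ of class $\C^2$, not $\HoelderSpace{2}{\gamma}$, and the Schauder theorem that gives $u\in\HoelderSpace{2}{\gamma}(\overline{\Omega})$ (GT Theorem~15.10) cannot be applied to $\pertDomain$. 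Even granting an upgrade of the assumptions on $\dir$, establishing uniformity in $\varepsilon$ of the Schauder constants and the convergence $\nabla u_\varepsilon^\dir\to\nabla u$ near $\partial\Omega$ is a nontrivial piece of elliptic theory. The paper sidesteps all of this: its decomposition via Lemmas~\ref{lemma:ProofStep1}--\ref{lemma:ProofStep4} places every Taylor expansion on $g$ (which is $\HoelderSpace{2}{\gamma}$ on $\R^d$) and on the $\C^2$ extension $\tilde u$ of $u$, while $u_\varepsilon^\dir$ enters only through $\C^0$ evaluations inside the time integrals of its own Feynman--Kac representation; the crucial decay $\sup_x\sup_t|u_\varepsilon^\dir(X^x_t)-\tilde u(X^x_t)|=O(\varepsilon)$ is then obtained from the stochastic Gronwall inequality (Lemma~\ref{lemma:StochasticGronwallRandomTimes}) rather than from elliptic regularity.

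Your uniformization step also does not go through. Dini's theorem requires monotone pointwise convergence, and the error maps $x\mapsto|\tfrac{1}{\varepsilon}(u_\varepsilon^\dir(x)-u(x))-\E[\beta_{\tau^x}\langle\nabla u-\nabla g,\dir\rangle(X^x_{\tau^x})]|$ have no monotonicity in $\varepsilon$. In the paper, Dini is applied legitimately to the monotone auxiliary families $\tau_{\varepsilon,\pm}^x$ inside Lemma~\ref{lemma:PropertiesPerturbedNew}, and the uniformity of the final limit in $x$ comes from the explicit uniform-in-$x$ constants in Lemmas~\ref{lemma:ProofStep1}--\ref{lemma:ProofStep4}, not from a Dini argument applied to the error maps themselves. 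A smaller point: turning your local martingale $\beta^\varepsilon w$ into a true martingale on $[0,\tau^x\wedge\pertExit]$ is not entirely ``standard'' here, because the integrand $\beta^\varepsilon(\nabla u_\varepsilon^\dir-\nabla u)\sigma$ may blow up near $\partial(\Omega\cap\pertDomain)$ given the available interior-only regularity; a localization argument along the lines of Lemma~\ref{lemma:ExpectationVanish} would be needed.
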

\begin{proof}
	Let $\dir\in\pertSpace$ be fixed. Proposition~\ref{PROP:BoundaryRegularity} and Proposition~\ref{PROP:ExistenceLadyzhenskaya} imply that there exists a unique solution $u_\varepsilon^\dir\in \mathcal{C} (\overline{\pertDomain})\cap \mathcal{C}^2 (\pertDomain)$ of the Dirichlet problem \eqref{eqnSemilinearPDE} on $\pertDomain$, and for $x\in \pertDomain$
	\begin{equation}\label{eqn:ProofStrongConvergenceEQ1}
		u_\varepsilon^\dir(x) = \E\Big[g(X_{\pertExit}^x) + \int_0^{\pertExit} f\big(X_s^x, u_\varepsilon^\dir(X_s^x)\big)\de s\Big].
	\end{equation}
	Thus, we can express 
	\begin{align*}
		u_\varepsilon^\dir (x) - u(x) &= \E\Big[g(X_{\pertExit}^x) + \int_0^{\pertExit} f\big(X_s^x, u_\varepsilon^\dir(X_s^x)\big)\de s - u(x) \Big],\quad x\in\Omega.
	\end{align*}
	Upon extending $\beta$ from $[0,\tau^x]$ to $[0,\pertExit]$ via
	\begin{equation*}
		\beta_t \defined \exp\Big(\int_0^t \Diff_u f\big(X_s^x,\uExt (X_s^x)\big)\de s\Big),\quad t\in[0,\pertExit]
	\end{equation*}
	we have
	\begin{align*}
		& g(X_{\pertExit}^x) + \int_0^{\pertExit} f\big(X_s^x, u_\varepsilon^\dir(X_s^x)\big)\de s - u(x) \\
		& \hspace*{1.0cm} = \beta_{\pertExit} \Big(g\big(X_{\pertExit}^x\big) - g\big(\pertX_{\pertExit}\big)\Big) + \beta_{\pertExit\wedge\tau^x} \Big( u\big(\pertX_{\pertExit\wedge\tau^x}\big)-u\big(X_{\pertExit\wedge\tau^x}^x\big)\Big) + \theItoDifference
	\end{align*}
	with
	%	UUU		UPDATE
	\MW{
		\begin{equation}\label{eqn:ProofSemilinearSplit}
			\begin{aligned}%
				\theItoDifference &\defined
				g\big(X_{\pertExit}^x\big) -\beta_{\pertExit} \Big(g\big(X_{\pertExit}^x\big) - g\big(\pertX_{\pertExit}\big)\Big) \\
				& \hspace*{1.0cm} - \beta_{\pertExit\wedge\tau^x} \Big( u\big(\pertX_{\pertExit\wedge\tau^x}\big)-u\big(X_{\pertExit\wedge\tau^x}^x\big)\Big)\\
				& \hspace{2.5cm} + \int_0^{\pertExit} f\big(X_s^x, u_\varepsilon^\dir(X_s^x)\big)\de s - u(x).
			\end{aligned}	
		\end{equation}
	}
	%    \begin{align}\label{eqn:ProofSemilinearSplit}
		%        \theItoDifference &\defined
		%        g\big(X_{\pertExit}^x\big) -\beta_{\pertExit} \Big(g\big(X_{\pertExit}^x\big) - g\big(\pertX_{\pertExit}\big)\Big) \\
		%        & \hspace*{1.0cm} - \beta_{\pertExit\wedge\tau^x} \Big( u\big(\pertX_{\pertExit\wedge\tau^x}\big)-u\big(X_{\pertExit\wedge\tau^x}^x\big)\Big)\\
		%        & \hspace{2.5cm} + \int_0^{\pertExit} f\big(X_s^x, u_\varepsilon^\dir(X_s^x)\big)\de s - u(x).
		%    \end{align}
	Here $\Tilde{u}\colon\HoldAll\rightarrow\R$ denotes a $\C^2$ extension of $u$, i.e.\ $\Tilde{u}|_{\overline{\Omega}} = u$, where $\HoldAll$ is defined in Remark~\ref{remark:Epsilon0}. This is feasible since Proposition~\ref{PropExistenceOfPDESolution} implies that $u$ is of class $\C^2$ on $\overline{\Omega}$ so the classical extension lemma, see e.g.~\cite[Lemma 2.20]{LeeSmoothManifolds} applies.
	Thus we have the upper bound
	%	UUU		UPDATE
	\MW{
		\begin{equation}\label{eqn:ProofStrongConvergenceBoundOfClaim}
			\begin{aligned}
				& \bigg|\tfrac{1}{\varepsilon}\Big(u_\varepsilon^\dir (x)-u(x)\Big) - \E\Big[\,\beta_{\tau^x}\big\langle\nabla u-\nabla g, \dir\big\rangle (X_{\tau^x}^x)\Big]\bigg| \\
				& \hspace*{0.5cm}\leq \bigg|\E\Big[\, \tfrac{1}{\varepsilon}\Big( \beta_{\pertExit} \Big(g\big(X_{\pertExit}^x\big) - g\big(\pertX_{\pertExit}\big)\Big) + \beta_{\pertExit\wedge\tau^x} \Big( u\big(\pertX_{\pertExit\wedge\tau^x}\big)-u\big(X_{\pertExit\wedge\tau^x}^x\big)\Big)  \Big)  \\
				& \hspace*{2.5cm} - \beta_{\tau^x}\big\langle\nabla u-\nabla g, \dir\big\rangle (X_{\tau^x}^x)\Big]\bigg| + \tfrac{1}{\varepsilon}\big|\E[\theItoDifference]\big| \\
				& \hspace*{0.5cm}\leq \big|\E[\thegTerms]\big| + \big|\E[\theuTerms]\big| + \tfrac{1}{\varepsilon}\big|\E[\theItoDifference]\big|
			\end{aligned}
		\end{equation}
	}
	%    \begin{align}\label{eqn:ProofStrongConvergenceBoundOfClaim}
		%        & \bigg|\tfrac{1}{\varepsilon}\Big(u_\varepsilon^\dir (x)-u(x)\Big) - \E\Big[\,\beta_{\tau^x}\big\langle\nabla u-\nabla g, \dir\big\rangle (X_{\tau^x}^x)\Big]\bigg|\notag \\
		%        & \hspace*{0.5cm}\leq \bigg|\E\Big[\, \tfrac{1}{\varepsilon}\Big( \beta_{\pertExit} \Big(g\big(X_{\pertExit}^x\big) - g\big(\pertX_{\pertExit}\big)\Big) + \beta_{\pertExit\wedge\tau^x} \Big( u\big(\pertX_{\pertExit\wedge\tau^x}\big)-u\big(X_{\pertExit\wedge\tau^x}^x\big)\Big)  \Big) \notag \\
		%        & \hspace*{2.5cm} - \beta_{\tau^x}\big\langle\nabla u-\nabla g, \dir\big\rangle (X_{\tau^x}^x)\Big]\bigg| + \tfrac{1}{\varepsilon}\big|\E[\theItoDifference]\big|\notag \\
		%        & \hspace*{0.5cm}\leq \big|\E[\thegTerms]\big| + \big|\E[\theuTerms]\big| + \tfrac{1}{\varepsilon}\big|\E[\theItoDifference]\big|
		%    \end{align}
	where
	\begin{align*}
		\thegTerms &\defined \tfrac{1}{\varepsilon} \beta_{\pertExit} \Big(  g\big(\pertX_{\pertExit}\big) - g\big(X_{\pertExit}^x\big) \Big) - \beta_{\tau^x}\big\langle\nabla g, \dir\big\rangle (X_{\tau^x}^x),\\
		\theuTerms &\defined \tfrac{1}{\varepsilon} \beta_{\pertExit\wedge\tau^x} \Big( u\big(\pertX_{\pertExit\wedge\tau^x}\big) - u\big(X_{\pertExit\wedge\tau^x}^x\big)\Big) - \beta_{\tau^x}\big\langle\nabla u, \dir\big\rangle (X_{\tau^x}^x).
	\end{align*}
	We proceed by showing that $\E[\thegTerms]\to 0$, $\E[\theuTerms]\to 0$ and $\tfrac{1}{\varepsilon}\E[\theItoDifference]\to 0$ as $\varepsilon\rightarrow 0$, uniformly with respect to $x\in\Omega$.
	To show that $\LimSup|\E[\thegTerms]|=0$ note that
	\begin{align*}
		|\thegTerms| &\leq \big|\beta_{\pertExit}\big| \Big|\tfrac{1}{\varepsilon}\Big( g\big(\pertX_{\pertExit}\big)-g\big(X_{\pertExit}^x\big) \Big)- \big\langle \nabla g, \dir\big\rangle (X_{\tau^x}^x)\Big| + \big|\beta_{\pertExit}- \beta_{\tau^x}\big| \Big|\big\langle \nabla g, \dir\big\rangle (X_{\tau^x}^x)\Big|\\
		&\leq \Big|\tfrac{1}{\varepsilon}\Big( g\big(X_{\pertExit}^x+\varepsilon\dir (X_{\pertExit}^x)\big)-g\big(X_{\pertExit}^x\big) \Big) - \big\langle \nabla g, \dir\big\rangle (X_{\tau^x}^x)\Big| + C \big|\beta_{\pertExit}- \beta_{\tau^x}\big|
	\end{align*}
	where we use the fact that $\beta_t \leq 1$, continuity of $\nabla g$ on $\R^d$ and boundedness of $\dir$. Since $\Diff^2 g$ is continuous, Lemma \ref{lemma:UniformConvergenceDirectionalDerivative} yields
	\begin{align*}
		\LimSup \E\bigg[\Big|\tfrac{1}{\varepsilon}\Big( g\big(X_{\pertExit}^x+\varepsilon\dir (X_{\pertExit}^x)\big)-g\big(X_{\pertExit}^x\big) \Big) - \big\langle \nabla g, \dir\big\rangle (X_{\tau^x}^x)\Big| \bigg] = 0.
	\end{align*}
	On the other hand, by definition of $\beta_t$
	%	UUU		UPDATE
	\MW{
		\begin{equation}
			\begin{aligned}
				\big|\beta_{\pertExit}- \beta_{\tau^x}\big| &\leq \bigg| \exp\Big(\int_{\tau^x\wedge\pertExit}^{\tau^x} \Diff_u f\big(X_s^x, \uExt (X_s^x)\big)\de s\Big) - \exp\Big(\int_{\tau^x\wedge\pertExit}^{\pertExit} \Diff_u f\big(X_s^x, \uExt (X_s^x)\big)\de s\Big)\bigg| \\
				& = 1 - \exp\Big(\int_{\tau^x\wedge\pertExit}^{\tau^x\vee\pertExit} \Diff_u f\big(X_s^x, \uExt (X_s^x)\big)\de s\Big) \\
				& \leq 1 - \exp \big(-\|\Diff_u f\|_\infty |\pertExit - \tau^x|\big).
			\end{aligned}
		\end{equation}
	}
	%    \begin{align*}
		%        \big|\beta_{\pertExit}- \beta_{\tau^x}\big| &\leq \bigg| \exp\Big(\int_{\tau^x\wedge\pertExit}^{\tau^x} \Diff_u f\big(X_s^x, \uExt (X_s^x)\big)\de s\Big) - \exp\Big(\int_{\tau^x\wedge\pertExit}^{\pertExit} \Diff_u f\big(X_s^x, \uExt (X_s^x)\big)\de s\Big)\bigg| \\
		%        & = 1 - \exp\Big(\int_{\tau^x\wedge\pertExit}^{\tau^x\vee\pertExit} \Diff_u f\big(X_s^x, \uExt (X_s^x)\big)\de s\Big) \\
		%        & \leq 1 - \exp \big(-\|\Diff_u f\|_\infty |\pertExit - \tau^x|\big).
		%    \end{align*}
	Since $y\mapsto 1-\exp(-Cy)$ is concave, Jensen's inequality yields
	\begin{align*}
		\E \Big[\big|\beta_{\pertExit}- \beta_{\tau^x}\big|\Big] \leq 1 - \exp\Big(-C \E\big[|\pertExit-\tau^x|\big]\Big) \leq 1 - \exp\Big(-C \,\sup_{y\in\Omega}\,\E\big[|\tau_\varepsilon^y -\tau^y|\big]\Big),
	\end{align*}
	and using Lemma \ref{lemma:PropertiesPerturbedNew} it follows that
	\begin{align*}
		\LimSup \E\Big[\big|\beta_{\pertExit} - \beta_{\tau^x}\big|\Big] \leq 1 - \exp\Big(-C \,\LimSup \E\big[|\pertExit -\tau^x|\big]\Big) = 0.
	\end{align*}
	Combining the preceding two convergence statements, we have
	%	UUU		UPDATE
	\MW{
		$\LimSup\big|\E[\thegTerms]\big| = 0.$
	}
	%    \begin{align*}
		%        \LimSup\big|\E[\thegTerms]\big| = 0.
		%    \end{align*}
	Analogously, we conclude that
	%	UUU		UPDATE
	\MW{
		$\LimSup\big|\E[\theuTerms]\big| = 0$
	}
	%    \begin{align*}
		%        \LimSup\big|\E[\theuTerms]\big| = 0
		%    \end{align*}
	where Lemma~\ref{lemma:UniformConvergenceDirectionalDerivative} applies to $\tilde{u}$.
	Thus to complete the proof it remains to demonstrate that
	\begin{align}\label{eqn:ProofStrongConvergenceUniformConvergenceVanisher}
		\LimSup\tfrac{1}{\varepsilon}\big|\E[\theItoDifference]\big| = 0.
	\end{align}
	This will be accomplished using Lemma~\ref{lemma:ProofStep1}, Lemma~\ref{lemma:ProofStep2}, Lemma~\ref{lemma:ProofStep3} and Lemma~\ref{lemma:ProofStep4} below.
	
	\begin{lemma}\label{lemma:ProofStep1}
		Using the notation in the proof of Theorem~\ref{theorem:StronConvergenceShapeDerivative}, there exist a constant $C>0$ (not depending on $x\in\Omega$) and a process $\theRemainder$ such that
		\begin{align*}
			u(\pertX_t) = \E_t\Big[g(\pertX_{\pertExit}) + \int_t^{\pertExit} f\big(\pertX_s,u(\pertX_s)\big)\de s \Big] - \E_t\Big[\int_t^{\pertExit}\theRemainder_s\de s\Big],\quad t\in[0,\pertExit]
		\end{align*}
		and
		\begin{align*}
			\sup_{x\in\Omega} \sup_{t\geq 0}|\theRemainder_t| \leq C\varepsilon . \closeEqn
		\end{align*}
	\end{lemma}
	Note that, in contrast to the classical Feynman-Kac representation of $u$ along $X^x$, i.e.
	\begin{align*}
		u(X_t^x) = \E_t\Big[g(X_{\tau^x}^x) + \int_t^{\tau^x} f\big(X_s^x,u(X_s^x)\big)\de s \Big], \quad t\in[0,\tau^x]
	\end{align*}
	Lemma~\ref{lemma:ProofStep1} provides a representation of $u$ along the \emph{perturbed} process $\pertX$ (equivalently, a representation of $u\circ T_\varepsilon^\dir$ along $X^x$).
	\begin{proof}[Proof of Lemma~\ref{lemma:ProofStep1}]
		For ease of notation, let $T\defined T_\varepsilon^\dir$, note that $\Diff T = \mathcal{I}_d + \varepsilon \Diff \dir$ and set
		\begin{align}\label{def:GAMMAandDTMatrix}
			\Gamma \defined \tfrac{1}{\varepsilon}\Big(\trace\big[\sigma^\top\Diff^2 T^j\sigma\big] \Big)_{j=1,\dots,d}
			= \Big(\trace\big[\sigma^\top\Diff^2\dir^j\sigma\big] \Big)_{j=1,\dots,d} \in\R^d 
		\end{align}
		where $T^j$ denotes the $j$\textsuperscript{th} coordinate of $T$. By It\=o and recalling that $u(\pertX_t)=u\circ T(X^x_t)$;
		%	UUU		UPDATE
		\MW{
			\begin{equation}
				\begin{aligned}
					u(\pertX_t) - u (\pertX_0) &= \int_0^t \Diff u(\pertX_s) \Diff T^\top (X_s^x) \sigma (X_s^x) \de W_s \\
					& \hspace*{0.5cm} + \int_0^t \Diff u (\pertX_s) \Diff T^\top (X_s^x) \mu (X_s^x) + \Diff u (\pertX_s) \tfrac{1}{2}\varepsilon\Gamma (X_s^x) \\
					& \hspace*{1.5cm} + \tfrac{1}{2} \trace\Big(\big(\Diff T^\top \sigma\big)^\top (X_s^x)\Diff^2 u(\pertX_s)\big(\Diff T^\top\sigma\big)(X_s^x)\Big) \de s \\
					&= \int_0^t \Diff u(\pertX_s) \Diff T^\top (X_s^x) \sigma (X_s^x) \de W_s \\
					& \hspace*{0.5cm} + \int_0^t \Diff u (\pertX_s) \mu (\pertX_s) + \tfrac{1}{2}\trace \Big(\big(\sigma^\top\Diff^2u\sigma\big)(\pertX_s)\Big) + \theRemainder_s \,\de s\\
					&= \int_0^t \Diff u(\pertX_s) \Diff T^\top (X_s^x) \sigma (X_s^x) \de W_s \\
					& \hspace*{0.5cm} + \int_0^t \InfGen[u](\pertX_s)\de s + \int_0^t \theRemainder_s \,\de s,\quad t\in[0,\pertExit]
				\end{aligned}
			\end{equation}
		}
		%    \begin{align*}
			%        u(\pertX_t) - u (\pertX_0) &= \int_0^t \Diff u(\pertX_s) \Diff T^\top (X_s^x) \sigma (X_s^x) \de W_s \\
			%        & \hspace*{0.5cm} + \int_0^t \Diff u (\pertX_s) \Diff T^\top (X_s^x) \mu (X_s^x) + \Diff u (\pertX_s) \tfrac{1}{2}\varepsilon\Gamma (X_s^x) \\
			%        & \hspace*{1.5cm} + \tfrac{1}{2} \trace\Big(\big(\Diff T^\top \sigma\big)^\top (X_s^x)\Diff^2 u(\pertX_s)\big(\Diff T^\top\sigma\big)(X_s^x)\Big) \de s \\
			%        &= \int_0^t \Diff u(\pertX_s) \Diff T^\top (X_s^x) \sigma (X_s^x) \de W_s \\
			%        & \hspace*{0.5cm} + \int_0^t \Diff u (\pertX_s) \mu (\pertX_s) + \tfrac{1}{2}\trace \Big(\big(\sigma^\top\Diff^2u\sigma\big)(\pertX_s)\Big) + \theRemainder_s \,\de s\\
			%        &= \int_0^t \Diff u(\pertX_s) \Diff T^\top (X_s^x) \sigma (X_s^x) \de W_s \\
			%        & \hspace*{0.5cm} + \int_0^t \InfGen[u](\pertX_s)\de s + \int_0^t \theRemainder_s \,\de s,\quad t\in[0,\pertExit]
			%    \end{align*}
		where $\theRemainder=(\theRemainder_s)_s$ is given by
		\begin{multline}\label{def:theRemainder}
			\theRemainder_s \defined \Diff u(\pertX_s) \Big\{\Diff T^\top(X_s^x)\mu(X_s^x)-\mu(\pertX_s)+ \tfrac{1}{2}\varepsilon\Gamma (X_s^x)\Big\} \\
			+ \tfrac{1}{2} \trace\Big(\big(\Diff T^\top \sigma\big)^\top (X_s^x)\Diff^2 u(\pertX_s)\big(\Diff T^\top\sigma\big)(X_s^x)\Big) -\tfrac{1}{2}\trace \Big(\big(\sigma^\top\Diff^2u\sigma\big)(\pertX_s)\Big)
		\end{multline}
		on $\{s\leq\pertExit\}$ and $\theRemainder_s\defined 0$ otherwise.
		By Lemma~\ref{lemma:PropertiesPerturbedNew} the perturbed exit time $\pertExit$ coincides with the first exit time of $\pertX$ from $\Omega$. Thus taking conditional expectations, using Lemma~\ref{lemma:WaldLemmaForBrownianMotion} and the fact that $u$ solves \eqref{eqnSemilinearPDE} on $\Omega$ it follows that
		\begin{align*}
			u(\pertX_t) = \E_t\Big[g(\pertX_{\pertExit}) + \int_t^{\pertExit} f\big(\pertX_s,u(\pertX_s)\big)\de s\Big] - \E_t\Big[\int_t^{\pertExit}\theRemainder_s\de s \Big].
		\end{align*}
		It remains to establish the bound for $\theRemainder$.
		Expanding the first line of $\theRemainder$ and using \eqref{def:GAMMAandDTMatrix}
		\begin{align*}
			& \Diff u(\pertX_s)^\top\Big(\Diff T^\top (X_s^x)\mu(X_s^x)-\mu(\pertX_s)+\tfrac{1}{2}\varepsilon\Gamma(X_s^x)\Big) \\
			& \hspace*{0.5cm} = \Diff u(\pertX_s)^\top\big(\mu(X_s^x)-\mu(\pertX_s)\big) \\
			& \hspace*{1.5cm} + \varepsilon\Diff u(\pertX_s)^\top\Diff\dir(X_s^x)^\top\mu(X_s^x) + \tfrac{1}{2} \varepsilon \Diff u(\pertX_s)^\top \Gamma(X_s^x).
		\end{align*}    
		%	UUU		UPDATE
		%	\newpage
		Since $\Diff u$ and $\Diff\dir$ are bounded and $\mu$ is continuous, we obtain
		\begin{equation*}
			\varepsilon\Diff u(\pertX_s)^\top\Diff\dir(X_s^x)^\top\mu(X_s^x) \leq \varepsilon\big\|\Diff u(\pertX_s)\big\| \big\|\Diff\dir(X_s^x)\big\| \big\|\mu(X_s^x)\big\| \leq C_1\varepsilon.
		\end{equation*}
		Moreover, since $\sigma$ and $\Diff^2\dir$ are continuous
		\begin{equation*}
			\varepsilon\Diff u(\pertX_s)^\top \Gamma(X_s^x) \leq \varepsilon\big\|\Diff u(\pertX_s)\big\| \Bigg(\sum_{j=1}^d \Big|\trace \Big(\big[\sigma^\top\Diff^2\dir^j\sigma\big](X_s^x)\Big) \Big|^2\Bigg)^{\frac{1}{2}} \leq C_2\varepsilon
		\end{equation*}
		and since $\mu$ is Lipschitz continuous
		\begin{align*}
			\Diff u(\pertX_s)^\top\big(\mu (X_s^x)-\mu (\pertX_s)\big) & \leq \big\|\Diff u(\pertX_s)\big\| \big\|\mu (X_s^x)-\mu (\pertX_s)\big\| \leq C_3\varepsilon
		\end{align*}
		where $C_1,C_2,C_3>0$ are constants that do not depend on $x$.
		We next provide an upper bound for the second line of \eqref{def:theRemainder}.
		Recalling \eqref{def:GAMMAandDTMatrix} and using elementary properties of the trace operator
		\begin{align*}
			& \trace\Big(\big(\Diff T^\top\sigma\big)^\top(X_s^x)\Diff^2 u(\pertX_s)\big(\Diff T^\top\sigma\big)(X_s^x) - \big(\sigma^\top\Diff^2 u\sigma\big)(\pertX_s)\Big) \\
			&\hspace*{0.5cm} = \trace\Big(\big(\sigma(X_s^x)\sigma^\top(X_s^x) -\sigma(\pertX_s)\sigma^\top(\pertX_s)\big)\Diff^2u(\pertX_s)\Big) + \varepsilon \trace(A)
		\end{align*}
		where
		\begin{align*}
			&A \defined \sigma^\top(X_s^x)\Diff^2u(\pertX_s)\big(\Diff\dir^\top\sigma\big)(X_s^x) + \big(\sigma^\top\Diff\dir\big)(X_s^x)\Diff^2u(\pertX_s)\sigma(X_s^x) \\
			& \hspace*{0.5cm} + \varepsilon\big(\sigma^\top\Diff\dir\big)(X_s^x)\Diff^2u(\pertX_s)\big(\Diff\dir^\top\sigma\big)(X_s^x).
		\end{align*}
		Since $\Diff^2 u$ is bounded and $\sigma$ and $\Diff\dir$ are continuous, $A$ is uniformly bounded. By the Cauchy-Schwarz inequality and Lipschitz continuity of $\sigma\sigma^\top$ on $\overline{\Omega}\cup\overline{\pertDomain}$
		\begin{align*}
			\trace\Big(\big(\big[\sigma\sigma^\top\big](X_s^x)-\big[\sigma\sigma^\top\big](\pertX_s)\big)\Diff^2u(\pertX_s)\Big) \leq C_4\varepsilon.
		\end{align*}
		Combining the preceding estimates we obtain
		\begin{align*}
			\big|\theRemainder_t\big| \leq C\varepsilon
		\end{align*}
		where $C>0$ does not depend on $x\in\Omega$, completing the proof of Lemma~\ref{lemma:ProofStep1}.
	\end{proof}
	
	\begin{lemma}\label{lemma:ProofStep2}
		Using the notation in the proof of Theorem~\ref{theorem:StronConvergenceShapeDerivative}, there exists a constant $C>0$ (not depending on $x\in\Omega$) such that
		\begin{align}\label{eqn:ProofStrongConvergenceGronwallPreparation}
			\big|u_\varepsilon^\dir (X_t^x)-\uExt (X_t^x)\big| \leq C \E_t\Big[ \varepsilon\big(\pertExit +1\big) + \int_t^{\pertExit}\big|u_\varepsilon^\dir (X_s^x)-\uExt (X_s^x)\big|\de s \Big], \quad t\in[0,\pertExit].\quad \closeEqn
		\end{align}
	\end{lemma}
	\begin{proof}[Proof of Lemma~\ref{lemma:ProofStep2}]
		Note that $\pertX_t\in\Omega$ on $\{t\leq\pertExit\}$. Since $\tilde{u}$ is Lipschitz on $\HoldAll$
		\begin{align*}
			\big|u_\varepsilon^\dir (X_t^x)-\uExt(X_t^x)\big| & \leq \big|u_\varepsilon^\dir (X_t^x) - u(\pertX_t)\big| + \big|u(\pertX_t)-\uExt(X_t^x)\big| \\
			&\leq \big|u_\varepsilon^\dir (X_t^x)-u(\pertX_t)\big| + C\varepsilon.
		\end{align*}
		Since $u_\varepsilon^\dir$ is a solution of the PDE \eqref{eqnSemilinearPDE} on $\pertDomain$ we have
		\begin{align*}
			u_\varepsilon^\dir (X_t^x) = \E_t\Big[ g(X_{\pertExit}^x) + \int_t^{\pertExit} f\big(X_s^x, u_\varepsilon^\dir(X_s^x)\big) \de s \Big],\quad t\in[0,\pertExit]
		\end{align*}
		and thus using the representation of $u(\pertX_t)$ in Lemma~\ref{lemma:ProofStep1} we have
		%	UUU		UPDATE
		\MW{
			\begin{equation}
				\begin{aligned}
					\big|u_\varepsilon^\dir (X_t^x)-u (\pertX_t)\big| &= \bigg| \E_t\Big[ \int_t^{\pertExit} f\big(X_s^x, u_\varepsilon^\dir(X_s^x)\big) -f\big(\pertX_s,u(\pertX_s)\big) \de s \Big] \\
					& \hspace*{0.5cm} + \E_t\Big[ g(X_{\pertExit}^x) - g(\pertX_{\pertExit}) + \int_t^{\pertExit}\theRemainder_s\de s \Big] \bigg|\\
					&\leq \E_t\Big[ \int_t^{\pertExit} \big|f\big(X_s^x, u_\varepsilon^\dir(X_s^x)\big) -f\big(\pertX_s,\uExt(X_s^x)\big)\big| \de s  \Big] \\
					& \hspace*{0.5cm} + \E_t\Big[ \int_t^{\pertExit} \big|f\big(\pertX_s,\uExt(X_s^x)\big) -f\big(\pertX_s,u(\pertX_s)\big)\big| \de s \Big]\\
					& \hspace*{1.5cm} + \E_t\Big[ \big|g(X_{\pertExit}^x)-g(\pertX_{\pertExit})\big| + \int_t^{\pertExit}|\theRemainder_s|\de s \Big] \bigg|\\
					&\leq C_1 \E_t\Big[ \int_t^{\pertExit}\big|u_\varepsilon^\dir (X_s^x)-\uExt (X_s^x)\big|\de s \Big]  + C_2 \varepsilon + C_3 \varepsilon \E_t[\pertExit]
				\end{aligned}
			\end{equation}
		}
		%    \begin{align*}
			%        \big|u_\varepsilon^\dir (X_t^x)-u (\pertX_t)\big| &= \bigg| \E_t\Big[ \int_t^{\pertExit} f\big(X_s^x, u_\varepsilon^\dir(X_s^x)\big) -f\big(\pertX_s,u(\pertX_s)\big) \de s \Big] \\
			%        & \hspace*{0.5cm} + \E_t\Big[ g(X_{\pertExit}^x) - g(\pertX_{\pertExit}) + \int_t^{\pertExit}\theRemainder_s\de s \Big] \bigg|\\
			%        &\leq \E_t\Big[ \int_t^{\pertExit} \big|f\big(X_s^x, u_\varepsilon^\dir(X_s^x)\big) -f\big(\pertX_s,\uExt(X_s^x)\big)\big| \de s  \Big] \\
			%        & \hspace*{0.5cm} + \E_t\Big[ \int_t^{\pertExit} \big|f\big(\pertX_s,\uExt(X_s^x)\big) -f\big(\pertX_s,u(\pertX_s)\big)\big| \de s \Big]\\
			%        & \hspace*{1.5cm} + \E_t\Big[ \big|g(X_{\pertExit}^x)-g(\pertX_{\pertExit})\big| + \int_t^{\pertExit}|\theRemainder_s|\de s \Big] \bigg|\\
			%        &\leq C_1 \E_t\Big[ \int_t^{\pertExit}\big|u_\varepsilon^\dir (X_s^x)-\uExt (X_s^x)\big|\de s \Big]  + C_2 \varepsilon + C_3 \varepsilon \E_t[\pertExit]
			%    \end{align*}
		where $C_1$ and $C_2$ are Lipschitz constants and $C_3$ is the constant from Lemma~\ref{lemma:ProofStep1}.
	\end{proof}
	
	\begin{lemma}\label{lemma:ProofStep3}
		Using the notation in the proof of Theorem~\ref{theorem:StronConvergenceShapeDerivative}, there exists a constant $C>0$ (not depending on $x\in\Omega$) such that
		\begin{equation}\label{eqn:ProofStrongConvergenceGronwallBound}
			\big|u_\varepsilon^\dir (X_t^x)-\uExt (X_t^x)\big| \leq C\varepsilon\, \E_t\big[(\pertExit+1)\exp(\pertExit)\big],\quad t\in[0,\pertExit]. \closeEqn
		\end{equation}
	\end{lemma}
	%	UUU		UPDATE		\hyperref[assumption:holdAll_exp]{\emph{(E)}}
	\MW{%
		\begin{proof}[Proof of Lemma~\ref{lemma:ProofStep3}]
			By Assumption \hyperref[assumption:holdAll_exp]{\emph{(E)}} we have
			\begin{align*}
				\E\bigg[\Big(\int_0^{\pertExit}\big|u_\varepsilon^\dir (X_s^x)-\uExt (X_s^x)\big| \de s \Big)^2\bigg] 
				\leq \sup_{y\in\overline{\pertDomain}}\Big(\big|u_\varepsilon^\dir(y)\big| + \big|\uExt(y)\big|\Big)^2\,\E\big[{(\pertExit)}^2\big] 
				< \infty.
			\end{align*}
			Using Lemma~\ref{lemma:ProofStep2} it follows that the stochastic Gronwall bound stated in Lemma~\ref{lemma:StochasticGronwallRandomTimes} applies (with $\alpha=1$) and the assertion holds.
		\end{proof}
	}
	%    \begin{proof}[Proof of Lemma~\ref{lemma:ProofStep3}]
		%    Since $\sigma\sigma^\top$ is strictly elliptic, we have
		%    \begin{align*}
			%        \sup_{x\in\overline{\pertDomain}} \E\big[\exp(\rho\pertExit)\big] < \infty
			%    \end{align*}
		%    for every $\rho>0$, and
		%    \begin{align*}
			%        \E\bigg[\Big(\int_0^{\pertExit}\big|u_\varepsilon^\dir (X_s^x)-\uExt (X_s^x)\big| \de s \Big)^2\bigg] \leq \sup_{y\in\overline{\pertDomain}}\Big(\big|u_\varepsilon^\dir(y)\big| + \big|\uExt(y)\big|\Big)\,\E\big[{(\pertExit)}^2\big] < \infty.
			%    \end{align*}
		%    Using Lemma~\ref{lemma:ProofStep2} it follows that the stochastic Gronwall bound stated in Lemma~\ref{lemma:StochasticGronwallRandomTimes} applies (with $\alpha=1$) and the assertion holds.
		%    \end{proof}
	%
	\begin{lemma}\label{lemma:ProofStep4}
		There is a constant $C>0$ (not dependent on $x\in\Omega$) such that
		\begin{align}\label{eqn:ProofStrongConvergenceBoundZeps}
			\big|\E[\theItoDifference]\big| \leq C\varepsilon\, \E\big[\pertExit -(\pertExit\wedge\tau^x)\big] + C\,\E\Big[\int_0^{\pertExit} \big| u_\varepsilon^\dir (X_s^x)-\uExt (X_s^x)\big|^2 \de s\Big]. \closeEqn
		\end{align}
	\end{lemma}
	\begin{proof}[Proof of Lemma~\ref{lemma:ProofStep4}]
		Since $X_{\pertExit}^x\in\partial\pertDomain$ and $\pertX_{\pertExit}\in\partial\Omega$ we have $g(X^x_{\pertExit})=u^\dir_\varepsilon(X_{\pertExit}^x)$ and $g(\pertX_{\pertExit})=u(\pertX_{\pertExit})$.
		Thus by definition of $\theItoDifference$, see \eqref{eqn:ProofSemilinearSplit}, and noting that $\beta_0=1$ it follows that
		\begin{align}\label{eqn:ProofStrongConvergenceStep1Eq1}
			\theItoDifference = &\, (1-\beta_{\pertExit})u_\varepsilon^\dir\big(X_{\pertExit}^x\big) - (1-\beta_{0})u_\varepsilon^\dir\big(X_{0}^x\big)\notag\\
			&\hspace*{2.0cm} + \beta_{\pertExit\wedge\tau^x} u\big(X_{\pertExit\wedge\tau^x}^x\big) - \beta_0 u(x)\notag\\
			& \hspace*{4.0cm} +\beta_{\pertExit} u(\pertX_{\pertExit}) - \beta_{\pertExit\wedge\tau^x} u\big(\pertX_{\pertExit\wedge\tau^x}\big)\notag\\
			&\hspace*{6.0cm} + \int_0^{\pertExit} f\big(X_s^x, u_\varepsilon^\dir(X_s^x)\big)\de s.
		\end{align}
		We proceed by expanding each line using It\=o's formula. For the first, since $u_\varepsilon^\dir$ solves the PDE on $\pertDomain$ we have
		%	UUU		UPDATE
		\MW{
			\begin{equation}\label{eqn:ProofStrongConvergenceStep1Eq2}
				\begin{aligned}
					& (1-\beta_{\pertExit})u_\varepsilon^\dir\big(X_{\pertExit}^x\big) - (1-\beta_{0})u_\varepsilon^\dir\big(X_{0}^x\big) \\
					& \hspace*{1.0cm} = \int_{0}^{\pertExit} (1-\beta_s)\InfGen [u_\varepsilon^\dir](X_s^x) - \beta_s u_\varepsilon^\dir(X_s^x) \Diff_u f\big(X_s^x,\uExt(X_s^x)\big)\de s \\
					& \hspace*{2.0cm} + \int_0^{\pertExit} (1 -\beta_s)\Diff u_\varepsilon^\dir (X_s^x)\sigma(X_s^x)\de W_s \\
					& \hspace*{1.0cm} = - \int_{0}^{\pertExit} f\big(X_s^x,u_\varepsilon^\dir(X_s^x)\big)\de s \\
					& \hspace*{2.0cm} + \int_0^{\pertExit\wedge\tau^x}\beta_s\Big(f\big(X_s^x,u_\varepsilon^\dir(X_s^x)\big) - u_\varepsilon^\dir(X_s^x) \Diff_u f\big(X_s^x,\uExt(X_s^x)\big)\Big)\de s\\
					& \hspace*{3.0cm} + \int_{\pertExit\wedge\tau^x}^{\pertExit}\beta_s\Big(f\big(X_s^x,u_\varepsilon^\dir(X_s^x)\big) - u_\varepsilon^\dir(X_s^x) \Diff_u f\big(X_s^x,\uExt(X_s^x)\big)\Big)\de s\\
					& \hspace*{4.0cm} + \int_0^{\pertExit} (1 -\beta_s)\Diff u_\varepsilon^\dir (X_s^x)\sigma(X_s^x)\de W_s.
				\end{aligned}
			\end{equation}
		}
		%    \begin{align}\label{eqn:ProofStrongConvergenceStep1Eq2}
			%        & (1-\beta_{\pertExit})u_\varepsilon^\dir\big(X_{\pertExit}^x\big) - (1-\beta_{0})u_\varepsilon^\dir\big(X_{0}^x\big)\notag \\
			%        & \hspace*{1.5cm} = \int_{0}^{\pertExit} (1-\beta_s)\InfGen [u_\varepsilon^\dir](X_s^x) - \beta_s u_\varepsilon^\dir(X_s^x) \Diff_u f\big(X_s^x,\uExt(X_s^x)\big)\de s\notag \\
			%        & \hspace*{2.5cm} + \int_0^{\pertExit} (1 -\beta_s)\Diff u_\varepsilon^\dir (X_s^x)\sigma(X_s^x)\de W_s \notag\\
			%        & \hspace*{1.5cm} = - \int_{0}^{\pertExit} f\big(X_s^x,u_\varepsilon^\dir(X_s^x)\big)\de s\notag \\
			%        & \hspace*{2.5cm} + \int_0^{\pertExit\wedge\tau^x}\beta_s\Big(f\big(X_s^x,u_\varepsilon^\dir(X_s^x)\big) - u_\varepsilon^\dir(X_s^x) \Diff_u f\big(X_s^x,\uExt(X_s^x)\big)\Big)\de s\notag\\
			%        & \hspace*{3.5cm} + \int_{\pertExit\wedge\tau^x}^{\pertExit}\beta_s\Big(f\big(X_s^x,u_\varepsilon^\dir(X_s^x)\big) - u_\varepsilon^\dir(X_s^x) \Diff_u f\big(X_s^x,\uExt(X_s^x)\big)\Big)\de s\notag\\
			%        & \hspace*{4.5cm} + \int_0^{\pertExit} (1 -\beta_s)\Diff u_\varepsilon^\dir (X_s^x)\sigma(X_s^x)\de W_s.
			%    \end{align}
		Notice that the time integral in the fourth line of \eqref{eqn:ProofStrongConvergenceStep1Eq1} appears with negative sign in the last line here, hence cancels out in \eqref{eqn:ProofStrongConvergenceStep1Eq1}.
		Moreover, due to Lemma~\ref{lemma:ExpectationVanish} the stochastic integral vanishes in expectation.
		Hence it remains to consider the second and third lines of \eqref{eqn:ProofStrongConvergenceStep1Eq1} and \eqref{eqn:ProofStrongConvergenceStep1Eq2}.
		
		We first address the second lines, covering $[0,\pertExit\wedge\tau^x]$ and compute
		\begin{align*}
			& \beta_{\pertExit\wedge\tau^x} u\big(X_{\pertExit\wedge\tau^x}^x\big) - \beta_0 u(x) = \int_0^{\pertExit\wedge\tau^x}\beta_s \Diff u(X_s^x)\sigma(X_s^x)\de W_s \\
			& \hspace*{3.5cm} + \int_0^{\pertExit\wedge\tau^x} \beta_s \Big(\InfGen [u](X^x_s) + u(X_s^x)\Diff_u f\big(X_s^x, u(X_s^x)\big)\Big)\de s. 
		\end{align*}
		Thus the difference of the second lines of \eqref{eqn:ProofStrongConvergenceStep1Eq1} and \eqref{eqn:ProofStrongConvergenceStep1Eq2} is given by $I_{\tau^x}+M_{\pertExit\wedge\tau^x}$ where $I=(I_t)_t$ is given by
		\begin{align}\label{eqn:NewIepsilon}
			&I_t\defined\int_0^{t\wedge\pertExit} \beta_s \Big(f\big(X_s^x,u_\varepsilon^\dir(X_s^x)\big) -f\big(X_s^x,\uExt(X_s^x)\big) \notag\\
			&\hspace*{2.5cm} + \big(\uExt(X_s^x)-u_\varepsilon^\dir(X_s^x)\big)\Diff_u f\big(X_s^x,\uExt(X_s^x)\big)\Big)\de s.
		\end{align}
		and $\E[M_{\pertExit\wedge\tau^x}]=0$ by Lemma~\ref{lemma:WaldLemmaForBrownianMotion}. Next, we consider the third lines of \eqref{eqn:ProofStrongConvergenceStep1Eq1} and \eqref{eqn:ProofStrongConvergenceStep1Eq2}.
		As before, we use It\=o's formula and the notation introduced in \eqref{def:GAMMAandDTMatrix}, to obtain
		\begin{align}\label{eqn:ProofStrongConvergenceStep1Eq3}
			& \beta_{\pertExit} u(\pertX_{\pertExit}) - \beta_{\pertExit\wedge\tau^x} u\big(\pertX_{\pertExit\wedge\tau^x}\big) = \int_{\pertExit\wedge\tau^x}^{\pertExit} \beta_s \Diff u(\pertX_s) \Diff T^\top (X_s^x) \sigma (X_s^x) \de W_s \notag\\
			& \hspace*{4.5cm} + \int_{\pertExit\wedge\tau^x}^{\pertExit} \beta_s \Big\{\Diff u(\pertX_s) \Big(\Diff T^\top  (X_s^x)\mu (X_s^x)+ \tfrac{1}{2}\varepsilon\Gamma(X_s^x) \Big) \notag\\
			& \hspace*{5.5cm} + \tfrac{1}{2} \trace \Big(\big(\Diff T^\top \sigma\big)^\top (X_s^x) \Diff^2 u(\pertX_s)\big(\Diff T^\top\sigma\big)(X_s^x)\Big) \notag\\
			& \hspace*{6.5cm} + u(\pertX_s)\Diff_u f\big(X_s^x,\uExt(X_s^x)\big)\Big\}\de s
		\end{align}
		and observe that the corresponding difference is given by $\theVanisher + I_{\pertExit}-I_{\tau^x}+M'_{\pertExit}$, where
		\begin{align*}
			\theVanisher \defined & \int_{\pertExit\wedge\tau^x}^{\pertExit} \beta_s \bigg\{\Diff u(\pertX_s)^\top \Big(\Diff T^\top (X_s^x)\mu (X_s^x)+ \tfrac{1}{2}\varepsilon\Gamma (X_s^x)\Big) \notag \\
			& \hspace*{1.5cm} + \tfrac{1}{2} \trace \Big(\big(\Diff T^\top \sigma\big)^\top (X_s^x) \Diff^2 u(\pertX_s)\big(\Diff T^\top\sigma\big)(X_s^x)\Big) \notag \\
			& \hspace*{2.5cm} + \big(u(\pertX_s)-\uExt (X_s^x)\big) \Diff_u f\big(X_s^x,\uExt(X_s^x)\big) \\
			& \hspace*{3.5cm} + f\big(X_s^x, \uExt (X_s^x)\big)\bigg\}\de s
		\end{align*}
		and $\E[M'_{\pertExit}]=0$ by Lemma~\ref{lemma:WaldLemmaForBrownianMotion}. 
		Using the mean value theorem it follows that
		\begin{align*}
			\big|\E[\theItoDifference]\big| = \E\big[|I_{\pertExit}| + |\theVanisher|\big] \leq C \E \Big[\int_0^{\pertExit} \big|u_\varepsilon^\dir (X_s^x) - \uExt (X_s^x) \big|^2 \de s \Big] + \E\big[|\theVanisher|\big] .
		\end{align*}
		Thus to complete the proof it remains to show that
		\begin{align*}
			|\theVanisher| \leq C\varepsilon\big(\pertExit -(\pertExit\wedge\tau^x)\big).
		\end{align*}
		For this, using that $\pertX \in \overline{\Omega}$ until $\pertExit$, we can use that $u$ solves the PDE \eqref{eqnSemilinearPDE} and observe 
		\begin{align}\label{eqn:theVanisherEqn}
			\theVanisher & = \int_{\pertExit\wedge\tau^x}^{\pertExit} \beta_s \bigg\{\Diff u(\pertX_s)^\top \Big(\Diff T^\top (X_s^x)\mu (X_s^x) - \mu(\pertX_s) + \frac{1}{2}\Gamma (X_s^x)\Big) \notag \\
			& \hspace*{1.5cm} + \frac{1}{2} \trace \Big(\big(\Diff T^\top \sigma\big)^\top (X_s^x) \Diff^2 u(\pertX_s)\big(\Diff T^\top\sigma\big)(X_s^x)-\big(\sigma^\top\Diff^2\sigma\big)(\pertX_s)\Big)\notag \\
			& \hspace*{2.5cm} + \big(u(\pertX_s)-\uExt (X_s^x)\big) \Diff_u f\big(X_s^x,\uExt(X_s^x)\big) \notag \\
			& \hspace*{3.5cm} + f\big(X_s^x, \uExt (X_s^x)\big)-f\big(\pertX_s, u(\pertX_s)\big)\bigg\}\de s \notag \\
			& = \int_{\pertExit\wedge\tau^x}^{\pertExit} \beta_s \Big\{\theRemainder_s + \big(u(\pertX_s)-\uExt (X_s^x)\big) \Diff_u f\big(X_s^x,\uExt(X_s^x)\big) \notag \\
			& \hspace*{3.5cm} + f\big(X_s^x, \uExt (X_s^x)\big)-f\big(\pertX_s, u(\pertX_s)\big)\Big\}\de s,
		\end{align}
		where the second identity is due to \eqref{def:theRemainder}. From here we proceed by bounding the integrand of $\theVanisher$ directly. Clearly, $|\beta_s|\leq 1$ and from Lemma~\ref{lemma:ProofStep1}, we have $\sup_{y\in\Omega} \sup_{t\geq 0}|\mathcal{E}^{y,\varepsilon}_t| \leq C\varepsilon$. Moreover, with the Lipschitz continuity of $f$ and $\uExt$ together with boundedness of $\Diff_u f$, we have
		\begin{align*}
			& \big(u(\pertX_s)-\uExt (X_s^x)\big) \Diff_u f\big(X_s^x,\uExt(X_s^x)\big) + f\big(X_s^x, \uExt (X_s^x)\big)-f\big(\pertX_s, u(\pertX_s)\big)\leq C\varepsilon.\qedhere
		\end{align*}
	\end{proof}
	
	We now complete the proof of Theorem~\ref{theorem:StronConvergenceShapeDerivative} by showing that \eqref{eqn:ProofStrongConvergenceUniformConvergenceVanisher} holds, i.e.
	\begin{align}
		\LimSup\tfrac{1}{\varepsilon}\big|\E[\theItoDifference]\big| = 0.
	\end{align}
	Lemma~\ref{lemma:ProofStep4} and Lemma~\ref{lemma:ProofStep3} yield
	\begin{align}\label{eqn:ProofStronConvergenceStep5}
		\big|\E[\mathcal{Z}_\varepsilon]\big| & \leq C\varepsilon\, \E\big[\pertExit -(\pertExit\wedge\tau^x)\big] + C\,\E\Big[\int_0^{\pertExit} \big|u_\varepsilon^\dir (X_s^x)-\uExt (X_s^x)\big|^2\de s\Big] \notag\\
		& \leq C\varepsilon\, \E\big[\pertExit -(\pertExit\wedge\tau^x)\big] + C\varepsilon^2\,\E\Big[\int_0^{\pertExit} \E_s\big[(\pertExit+1)\exp (\pertExit)\big]^2 \de s\Big]
	\end{align}
	where by Tonelli's theorem
	%	UUU		UPDATE
	\MW{
		\begin{equation}
			\begin{aligned}
				\E\Big[\int_0^{\pertExit}\E_s\big[(\pertExit+1)\exp(\pertExit)\big]^2 \de s\Big] & = \E\Big[\int_0^\infty \1_{\{s\leq\pertExit\}} \E_s\big[(\pertExit+1)\exp(\pertExit)\big]^2 \de s\Big] \\
				&\leq \E\Big[\int_0^\infty \1_{\{s\leq\pertExit\}}\E_s\big[(\pertExit+1)^2 \exp(2\pertExit)\big] \de s\Big] \\
				& = \int_0^\infty\E\Big[\1_{\{s\leq\pertExit\}}\E_s\big[(\pertExit+1)^2 \exp(2\pertExit)\big]\Big]\de s \\
				& = \int_0^\infty \E\Big[\1_{\{s\leq\pertExit\}} (\pertExit+1)^2 \exp(2\pertExit)\Big]\de s\\
				& = \E\Big[(\pertExit+1)^2\, \pertExit \exp(2\pertExit) \Big].
			\end{aligned}
		\end{equation}
	}
	%    \begin{align*}
		%        \E\Big[\int_0^{\pertExit}\E_s\big[(\pertExit+1)\exp(\pertExit)\big]^2 \de s\Big] & = \E\Big[\int_0^\infty \1_{\{s\leq\pertExit\}} \E_s\big[(\pertExit+1)\exp(\pertExit)\big]^2 \de s\Big] \\
		%        &\leq \E\Big[\int_0^\infty \1_{\{s\leq\pertExit\}}\E_s\big[(\pertExit+1)^2 \exp(2\pertExit)\big] \de s\Big] \\
		%        & = \int_0^\infty\E\Big[\1_{\{s\leq\pertExit\}}\E_s\big[(\pertExit+1)^2 \exp(2\pertExit)\big]\Big]\de s \\
		%        & = \int_0^\infty \E\Big[\1_{\{s\leq\pertExit\}} (\pertExit+1)^2 \exp(2\pertExit)\Big]\de s\\
		%        & = \E\Big[(\pertExit+1)^2\, \pertExit \exp(2\pertExit) \Big].
		%    \end{align*}
	%	UUU		UPDATE
	\MW{
		Clearly $\pertExit\leq\tau_{\HoldAll}^x$ for all $x\in\HoldAll$, so by Assumption \hyperref[assumption:holdAll_exp]{\emph{(E)}} and an elementary bound (see Lemma~\ref{lemma:elementaryBoundEXP}) we have
		\begin{align}\label{eqn:proofStrongConvergenceUniformExitBound}
			\sup_{\varepsilon\in[-\varepsilon_0,\varepsilon_0]}\,\sup_{x\in\pertDomain}\, \E\Big[(\pertExit+1)^2\, \pertExit \exp(2\pertExit) \Big] 
			%\leq \E\Big[(\tau^0_K+1)^2\, \tau^0_K \exp(2\tau^0_K) \Big] 
			\leq C \E\big[\exp(\rho\tau^x_{\HoldAll})\big] < \infty 
		\end{align}
	}
	%    We set
	%    \begin{equation*}
		%        K \defined \sup \big\{ \|y_1-y_2\| \,\big|\, y_1,y_2 \in \HoldAll \big\} < \infty
		%    \end{equation*}
	%    and let $\tau_K^0$ denote the first exit time of $X^0$ from the cube $[-2K,2K]^d$.
	%    Clearly $\pertExit\leq\tau_K^0$ for all $x\in\HoldAll$ so
	%    \begin{align}\label{eqn:proofStrongConvergenceUniformExitBound}
		%        \sup_{\varepsilon\in[-\varepsilon_0,\varepsilon_0]}\,\sup_{x\in\pertDomain}\, \E\Big[(\pertExit+1)^2\, \pertExit \exp(2\pertExit) \Big] \leq \E\Big[(\tau^0_K+1)^2\, \tau^0_K \exp(2\tau^0_K) \Big] \leq \E\big[\exp(5\tau^0_K)\big].\quad 
		%    \end{align}
	Thus we obtain from \eqref{eqn:ProofStronConvergenceStep5} using Lemma~\ref{lemma:PropertiesPerturbedNew}
	\begin{align*}
		\LimSup \tfrac{1}{\varepsilon} \big|\E[\theItoDifference]\big| \leq C \LimSup \E\big[\pertExit -(\pertExit\wedge\tau^x)\big] + C\varepsilon = 0.
	\end{align*}
	This establishes \eqref{eqn:ProofStrongConvergenceUniformConvergenceVanisher} and therefore completes the proof of Theorem~\ref{theorem:StronConvergenceShapeDerivative}.
\end{proof}
%	UUU		UPDATE --- RUNDE KLAMMERN BEIM ERWARTUNGSWERT ECKIG GEMACHT!!
\begin{lemma}\label{lemma:ExpectationVanish}
	For any $\varepsilon \in[-\varepsilon_0,\varepsilon_0]$, we have
	\begin{align*}
		& \E\Big[(1-\beta_{\pertExit}^\varepsilon)u_\varepsilon^\dir\big(X_{\pertExit}^x\big) - (1-\beta_{0}^\varepsilon)u_\varepsilon^\dir\big(X_{0}^x\big)\Big] \\
		& \hspace*{1.5cm} = \E\Big[\int_{0}^{\pertExit} (1-\beta_s^\varepsilon)\InfGen [u_\varepsilon^\dir](X_s^x) - \beta_s^\varepsilon u_\varepsilon^\dir(X_s^x) \Diff_u f\big(\pertX_s,u(\pertX_s)\big)\de s\Big]. \closeEqn
	\end{align*}
\end{lemma}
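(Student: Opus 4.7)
The plan is to apply It\^o's formula to the product process $t\mapsto(1-\beta_t^\varepsilon)u_\varepsilon^\dir(X_t^x)$ and then argue that the resulting stochastic integral has vanishing expectation when evaluated at the exit time $\pertExit$. Note that $\beta^\varepsilon$ is a continuous process of finite variation with $\beta_0^\varepsilon=1$ and $\de\beta_t^\varepsilon=\beta_t^\varepsilon\Diff_u f(\cdot)\de t$; since $\Diff_u f\leq 0$ by \hyperref[assumption:LadyzhenskayaPardoux]{\emph{(PDE)}} we have $\beta_t^\varepsilon\in(0,1]$, and since $\beta^\varepsilon$ has zero quadratic variation the cross term in It\^o's formula drops out. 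Using that $u_\varepsilon^\dir$ solves \eqref{eqnSemilinearPDE} on $\pertDomain$ and hence satisfies $\InfGen[u_\varepsilon^\dir]=-f(\cdot,u_\varepsilon^\dir)$ there, the standard integration-by-parts computation produces the drift integrand on the right-hand side of the claim plus a stochastic integral $\int_0^{\cdot}(1-\beta_s^\varepsilon)\Diff u_\varepsilon^\dir(X_s^x)\sigma(X_s^x)\de W_s$.

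The key technical point is that Proposition~\ref{PROP:ExistenceLadyzhenskaya} only guarantees $u_\varepsilon^\dir\in\mathcal{C}(\overline{\pertDomain})\cap\mathcal{C}^2(\pertDomain)$, so $\Diff u_\varepsilon^\dir$ need not be bounded up to $\partial\pertDomain$ and It\^o's formula cannot be applied directly at $\pertExit$. I would therefore introduce a localising sequence
\[
    \tau_n\defined\pertExit\wedge\inf\big\{t\geq 0\,\big|\,\dist(X_t^x,\partial\pertDomain)\leq 1/n\big\},
\]
so that $\tau_n\uparrow\pertExit$ a.s., and on $[0,\tau_n]$ the path $X^x$ stays in a compact subset of $\pertDomain$ on which $u_\varepsilon^\dir$, $\Diff u_\varepsilon^\dir$ and $\sigma$ are bounded. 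Applying It\^o's formula up to $\tau_n$ and taking expectations, the stochastic integral is a bounded martingale and therefore has zero expectation, yielding the identity with $\pertExit$ replaced by $\tau_n$.

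It then remains to pass $n\to\infty$. On the boundary terms, continuity of $u_\varepsilon^\dir$ on $\overline{\pertDomain}$ and of $\beta^\varepsilon$ together with $\tau_n\uparrow\pertExit$ give almost-sure convergence, and since $|\beta_t^\varepsilon|\leq 1$ and $\|u_\varepsilon^\dir\|_{\overline{\pertDomain}}<\infty$, dominated convergence applies. For the Lebesgue integral, the integrand is uniformly bounded: $\InfGen[u_\varepsilon^\dir]=-f(\cdot,u_\varepsilon^\dir)$ extends continuously to $\overline{\pertDomain}$ hence is bounded there, $u_\varepsilon^\dir$ is bounded, and $\Diff_u f$ is continuous on the compact hold-all; together with the exponential moment bound $\sup_{x\in\overline{\pertDomain}}\E[\exp(\rho\pertExit)]<\infty$ established in Lemma~\ref{lemma:ProofStep3} (which gives in particular $\E[\pertExit]<\infty$), dominated convergence yields the desired identity in the limit.

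The main obstacle is the localisation step: one must verify that the drift remains integrable uniformly in $n$ and that the localising times exhaust $[0,\pertExit]$ without leaving a residual contribution; both are controlled by the continuity of $u_\varepsilon^\dir$ up to the boundary and the integrability of $\pertExit$, but the argument does rely crucially on the fact that only $u_\varepsilon^\dir$ itself (not its gradient) appears in the surviving boundary terms after expectation, which is precisely what makes localisation compatible with the passage to the limit.
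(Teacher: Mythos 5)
Your proposal follows the same route as the paper's proof: localize via the stopping times at distance $\delta$ (respectively $1/n$) from $\partial\pertDomain$, apply It\^o's formula on the localized interval where $\Diff u_\varepsilon^\dir$, $\sigma$ and $\beta^\varepsilon$ are bounded so that the stochastic integral has zero expectation, substitute $\InfGen[u_\varepsilon^\dir]=-f(\cdot,u_\varepsilon^\dir)$, and pass to the limit $\tau_n\uparrow\pertExit$ by dominated convergence using continuity of $u_\varepsilon^\dir$, $f$, $\Diff_u f$ on $\overline{\pertDomain}$, boundedness of $\beta^\varepsilon$, and integrability of $\pertExit$. The argument is correct and matches the paper's proof in both structure and the technical points it identifies as the crux.
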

\begin{proof}
	Throughout the proof $\varepsilon\in[-\varepsilon_0,\varepsilon_0]$ and $x\in\Omega$ are fixed. Let $\delta > 0$ and define 
	\[
	\Omega_\delta \defined \big(\pertDomain\big)_\delta \defined \big\{y\in\pertDomain \,|\, \dist \big(y,\partial\pertDomain\big) \geq \delta \big\} 
	\]
	as well as
	\[
	\tau_\delta \defined \inf \big\{t\geq 0 \,|\, \dist \big(X_t^x, \partial\pertDomain\big) < \delta \big\}.
	\]
	Then by construction $\overline{\Omega_\delta}\subseteq \Omega$ and $\tau_\delta \leq \pertExit$. With It\=o's formula, we compute
	\begin{align*}
		& (1-\beta_{\tau_\delta}^\varepsilon)u_\varepsilon^\dir\big(X_{\tau_\delta}^x\big) - (1-\beta_{0}^\varepsilon)u_\varepsilon^\dir\big(X_{0}^x\big) \\
		& \hspace*{1.5cm} = \int_{0}^{\tau_\delta} (1-\beta_s^\varepsilon)\InfGen [u_\varepsilon^\dir](X_s^x) - \beta_s^\varepsilon u_\varepsilon^\dir(X_s^x) \Diff_u f\big(\pertX_s,u(\pertX_s)\big)\de s \\
		& \hspace*{2.5cm} + \int_0^{\tau_\delta} (1 -\beta_s^\varepsilon)\Diff u_\varepsilon^\dir (X_s^x)\sigma(X_s^x)\de W_s.
	\end{align*}
	Using that $\sigma$ and $\beta^\varepsilon$ are globally bounded together with $\Diff u_\varepsilon^\dir$, we can invoke Lemma \ref{lemma:WaldLemmaForBrownianMotion} and obtain
	\begin{align*}
		\E\Big[\int_0^{\tau_\delta} (1 -\beta_s^\varepsilon)\Diff u_\varepsilon^\dir (X_s^x)\sigma(X_s^x)\de W_s\Big] = 0.
	\end{align*}
	Since $u_\varepsilon^\dir$ solves the PDE on $\pertDomain$, we have $\InfGen [u_\varepsilon^\dir] = - f(\cdot, u_\varepsilon^\dir)$ and hence
	\begin{align*}
		& \E\Big[(1-\beta_{\tau_\delta}^\varepsilon)u_\varepsilon^\dir\big(X_{\tau_\delta}^x\big) - (1-\beta_{0}^\varepsilon)u_\varepsilon^\dir\big(X_{0}^x\big)\Big] \\
		& \hspace*{1.5cm} =\E\Big[ \int_{0}^{\tau_\delta} (\beta_s^\varepsilon-1)f\big(X_s^x,u_\varepsilon^\dir (X_s^x)\big) - \beta_s^\varepsilon u_\varepsilon^\dir(X_s^x) \Diff_u f\big(\pertX_s,u(\pertX_s)\big)\de s\Big].
	\end{align*}
	Analogously to the proof of Lemma \ref{lemma:PropertiesPerturbedNew}, we can show that $\tau_\delta \rightarrow \pertExit$ a.s.~as $\delta\to 0$. With the continuity of $u_\varepsilon^\dir, f$ and $\Diff_u f$ on $\overline{\pertDomain}$ together with the continuity of $\beta^\varepsilon$ and its boundedness, we are in position to invoke Lebesgue's theorem and the proof is complete. 
\end{proof}

%---------------------------------------------------
% 5. Simulation of Probabilistic Representation
%---------------------------------------------------
\section{Simulation Methodology}\label{Section:SimulationProbShape}
This section provides a simulation methodology based on the probabilistic representation of shape derivatives in Theorem~\ref{theorem:ProbabilisticRepresentationShapeFunctionalDerivative}.
Importantly, this approach does not necessarily require a mesh or discretization of the relevant domain. As a benchmark example, we consider a tracking type shape functional; 
\begin{equation}
    \trackingFunctional\colon u \mapsto \int_{\text{dom}(u)} \tfrac{1}{2} |u(x)-\trackingData(x)|^2 \dLebesgue{d}{x} 
\end{equation} 
where $\trackingData\colon\R^d\to\R$ is a given data map. Theorem~\ref{theorem:ProbabilisticRepresentationShapeFunctionalDerivative} implies that
\begin{multline*}
    \D\trackingFunctional[\dir] = \constant{+} \E\Big[\big\langle \dir,\nabla u-\nabla g\big\rangle \big(\enX^{X_0^+}\big)\Big] -\constant{-}\E\Big[\big\langle\dir ,\nabla u-\nabla g \big\rangle\big(\enX^{X_0^-}\big)\Big]\\
    - \int_{\partial\Omega}\tfrac{1}{2}\big|g(y)-\trackingData(y)\big|^2\,\big\langle\dir,n\big\rangle(y)\,\dSphere{d-1}{y} ,
\end{multline*}
where
\[
    \constant{\pm} = \pm\int_{\Omega^\pm} u(x)-\trackingData(x)\dLebesgue{d}{x}, \quad \Omega^+ = \bigl\{x\in\Omega \,\big|\, u(x)\geq \trackingData(x)\bigr\}, \quad \Omega^- = \Omega\setminus\Omega^+
\]
and
\[
    \mu^\pm [\de x] = \frac{1}{\pm\constant{\pm}} \big(u(x)-\trackingData(x)\big) \dLebesgue{d}{x}.
\]
The exit-kill random variables $\enX^{X_0^\pm}$ can be simulated via Algorithm \ref{Algorithm:ExitOrKilled}. Initial points $X_0^\pm$ can, for instance, be sampled using the acceptance-rejection method, see e.g.\ \cite[Section 2.2.2]{Glasserman}; the same method can also be employed for a mesh-free computation of the constants $\constant{\pm}$. We further wish to emphasize that the simulations of $X_0^\pm$ and, in fact, the exit-kill variables $\enX^{X_0^\pm}$ do not depend on the choice of distortion $\dir\in\pertSpace$, hence have to be carried out only once for each domain.

\medskip
\begin{algorithm}[H]
    \SetAlgoLined 
    \textbf{initialization} Choose time step size $\Delta$, simulate initial points $X_0^\pm$ and $E\sim \text{Exp}(1)$\\
    $x_0 = X_0^\pm$ \\
    $\Lambda_0 = 0$ \\
    \While{$x_k \in \Omega$}{
    	1. Update $\Lambda_k = \Lambda_{k-1} + \Diff_u f\big(x_{k-1}, u(x_{k-1})\big) \Delta$\\
        \If{$\Lambda_k\geq E$}{
            kill process, i.e.\ $x_k = \dagger$\\ 
            \textbf{break while}
        }
        2. Update $x_k$ [Euler-Maruyama Scheme]\\
    }
    \Return{$\widehat{x}^{x_0}$ that is the closest point to $x_k$ contained in $\partial\Omega$, i.e. $$\widehat{x}^{x_0} \defined \arg\min \big\{\|y-x_k\|\,\big|\, y\in\conv(x_k, x_{k-1})\cap\overline{\Omega}\big\}$$}
    \caption{Random Start Exit-kill Random Variables}\label{Algorithm:ExitOrKilled}
\end{algorithm}
\medskip

Several comments and remarks concerning Algorithm \ref{Algorithm:ExitOrKilled} are in order.
First, note that the decomposition of $\Omega$ is merely required for the simulation of the initial points. Second, while in continuous time killing is triggered when the integrated intensity exceeds the exponentially drawn threshold, in Algorithm \ref{Algorithm:ExitOrKilled} this quantity is approximated and satisfies\footnote{The last estimate is due to the definition of the Euler-Maruyama scheme, and holds under weaker assumptions than \hyperref[assumption:LadyzhenskayaPardoux]{\emph{(PDE)}}, see e.g.\ \cite[Theorem 10.2.2]{KloedenPlaten}.}

\begin{align*}
    &\E\bigg[\Big|\Lambda_k - \int_{0}^{\Delta k} \Diff_u f\big(X_s^{x_0}, u(X_s^{x_0})\big) \de s \Big|\bigg] \\
    & \hspace*{1.0cm} \leq \sum_{j=1}^k \int_{\Delta(j-1)}^{\Delta j} \E\Big[\big|\Diff_u f \big(x_{j-1},u(x_{j-1})\big) - \Diff_u f \big(X_s^{x_0},u(X_s^{x_0})\big)\big|\Big]\de s \\
    & \hspace*{1.0cm} \leq k\Delta \max_{\ell\in\{1\dots k\}}\,\E\Big[\sup_{s\in\big[\Delta (\ell-1),\Delta\ell\big]} \big|\Diff_u f \big(x_{\ell-1},u(x_{\ell-1})\big) - \Diff_u f \big(X_s^{x_0},u(X_s^{x_0})\big)\big|\Big] \\
    & \hspace*{1.0cm} \leq Ck \Delta^\frac{3}{2},
\end{align*}
where $C>0$ is a constant, for any $k\geq 1$. Finally, note that even in the special case when the Euler-Maruyama approximation is exact, the corresponding exit times in general do not coincide. Such issues can be studied via excursion theory, see \cite{YorExcursion}; here we refer to research on convergence rates of approximation schemes \cite{BGG_forward_exit, GobetMenozzi2010, Milstein1996}, and for possibilities to improve the simulation accuracy of first exit times, see e.g.\ \cite{Baldi1995, Buchmann2005, BuchmannPetersen2006, Yang2018}. Nevertheless, for the purposes of simulating the probabilistic representation of the shape derivative in Theorem~\ref{theorem:ProbabilisticRepresentationShapeFunctionalDerivative}, these issues can be minimized by choosing a sufficiently small step size $\Delta$. 

%---------------------------------------------------
% 6. Accuracy Verification via Taylor Test
%---------------------------------------------------
\section{Numerical Verification}\label{Section:TaylorTest}
In this section we present numerical results for a benchmark example with different perturbations.
Specifically, we compare a mesh-free simulation method based on our probabilistic representation of $\D\Phi$ as in Section~\ref{Section:SimulationProbShape} with classical methods based on finite elements.
Moreover, in accordance with the literature on shape calculus, we perform corresponding Taylor tests.
The code used for the numerical results of this section is publicly available on GitHub at {\footnotesize\url{https://github.com/max-wuer/ProbabilisticShape}}.

Specifically, similarly as in \cite{Gangl_Sturm_Taylor_Test}, we consider the unit sphere in $\R^2$ and the tracking type functional with target
\[
\trackingData\colon\R^2\rightarrow\R;\qquad (x_1,x_2)\mapsto x_1(1-x_1)x_2(1-x_2).
\]
The state equation is given by the PDE~\ref{eqnSemilinearPDE} with coefficients
\begin{align*}
	\mu = 0,\quad \sigma = \sqrt{2}\mathcal{I},\quad f = 1,\quad g = 0.
\end{align*}
We next describe the numerical representations of the shape derivative $\D\Phi$. We provide (a)~a mesh-free representation as in Section~\ref{Section:SimulationProbShape}, (b)~a classical volume formulation based on finite elements, (c)~a classical boundary formulation based on finite elements, and (d)~a hybrid representation based on Theorem~\ref{theorem:ProbabilisticRepresentationShapeFunctionalDerivative} and an adjoint PDE.

\textbf{(a) Mesh-free representation.} 
According to Theorem~\ref{theorem:ProbabilisticRepresentationShapeFunctionalDerivative} the shape derivative is given by 
\begin{equation}
	\D\Phi_{\text{free}}[\dir] = \constant{+} \E\Big[\big\langle \dir,\nabla u\big\rangle \big(\enX^{X_0^+}\big)\Big] -\constant{-}\E\Big[\big\langle\dir ,\nabla u\big\rangle \big(\enX^{X_0^-}\big)\Big]\\
	- \int_{\partial\Omega}\big\langle\dir,\phi\big(\cdot,u\big)n\big\rangle\dSph{d-1} .
\end{equation}
We achieve a mesh-free evaluation of this representation as in Section~\ref{Section:SimulationProbShape} by employing a neural PDE solver and Monte Carlo simulations to obtain the constants $\constant{\pm}$, the expected values of the exit-kill random variables, and the surface integral in the representation of $\D\Phi$.\footnote{The implementation is based on \textsc{NumPy} and \textsc{PyTorch}.}

\textbf{(b) Volume formulation.} The classical volume representation of $\D\Phi$ is given by, see e.g.\ \cite[Section 3.2]{SturmDissertation});
\begin{multline}\label{EQ:Tracking_Type_Volume_Formulation}
	\D \trackingFunctional_{\text{vol}}[\dir]=\int_\Omega \tfrac{1}{2}(u-\trackingData)^2\divergence(\dir) - \divergence (\dir)p - (u-\trackingData)(\nabla\trackingData)^\top\dir\dLeb{d}\\
	-\int_\Omega (\nabla u)^T\left[ \divergence(\dir)\mathbf{I}-\Diff \dir-\Diff \dir^T\right]\nabla p \dLeb{d}
\end{multline}
where the state equation is understood in a weak sense, i.e.\footnote{$H_0^1(\Omega)$ denotes the Lebesgue-Sobolev space of weakly differentiable functions compactly supported within $\Omega$.}
\begin{equation}
	\text{Find }u\in H_0^1(\Omega),\quad\text{s.t. }-\int_\Omega\nabla u^T\nabla v \de x = \int_\Omega v\de x \quad \text{for every } v\in H_0^1(\Omega)
\end{equation}
and the adjoint state $p\in H_0^1(\Omega)$ is the solution of
\begin{equation}
	\text{Find }p\in H_0^1(\Omega),\quad\text{s.t. }-\int_\Omega\nabla p^T\nabla v \dLeb{d} = -\int_\Omega (u-\trackingData)v\dLeb{d} \quad \text{for all } v\in H_0^1(\Omega).
\end{equation}
The relevant PDEs are solved numerically using finite element methods.\footnote{Implementations of finite element methods are based on \textsc{FEniCS}.}

\textbf{(c) Boundary formulation.} The classical boundary formulation is obtained via Hadamard's structure theorem, see e.g.\ \cite[Theorem 2.27]{Sokolowski_Shape};
\begin{equation}\label{EQ:Tracking_Type_Boundary_Formulation}
	\D \trackingFunctional_{\text{bdry}}[\dir]=\int_{\partial\Omega}\langle\nabla u, n \rangle \langle\nabla p, n \rangle \langle \dir, n \rangle \dSph{d-1} + \tfrac{1}{2} \int_{\partial\Omega} \trackingData^2 \langle\dir, n\rangle \dSph{d-1}.
\end{equation}
As in the volume formulation, the relevant PDEs are solved by finite element methods.

\textbf{(d) Feynman-Kac representation.} The Feynman-Kac formulation of the shape derivative is given by
\begin{equation*}
	\D\trackingFunctional_{FK}[\dir] = \int_\Omega\Dphiu\phi(\cdot,u) \adjointFK \dLeb{d} - \int_{\partial\Omega} \big\langle\dir, \phi(\cdot,u)n \big\rangle\dSph{d-1}
\end{equation*}
where $\adjointFK$ denotes the $\C^2(\Omega)$ solution\footnote{Existence and uniqueness are ensured due to e.g.\ \cite[Theorem 6.13]{GilbargTrudinger}.} of the probabilistic adjoint equation
\begin{equation}\label{eqn:ProbabilisticAdjointPDE}
	\Delta \adjointFK = 0 \quad\text{on }\Omega,\qquad \adjointFK = \langle \nabla u -\nabla g, \dir\rangle \quad\text{on }\partial\Omega .
\end{equation}
This formulation is justified by Theorem~\ref{theorem:ProbabilisticRepresentationShapeDerivative} and, importantly, makes it possible to evaluate the probabilistic representation of Theorem~\ref{theorem:ProbabilisticRepresentationShapeDerivative} using purely deterministic means.
As such, it represents a hybrid between the mesh-free probabilistic and the classical volume formulation.\footnote{Note that the Feynman-Kac formulation merely applies in the absence of killing, i.e.\ whenever $\Diff_u f=0$. Nonetheless, the formulation can be helpful to validate the results of Monte Carlo simulations since
	\begin{equation}
		\E\Big[\big\langle \dir,\nabla u-\nabla g\big\rangle \big(\enX^{X_0^\pm}\big)\Big] = \int_{\Omega} \tfrac{1}{\pm\constant{\pm}} (u-\trackingData) \adjointFK \dLeb{d} .
\end{equation}}
As in the classical formulations, the adjoint equation is solved numerically using finite elements.
%	UUU		UPDATE
%\newpage
\begin{table}[H]
    \centering
    \def\arraystretch{1.1}
    \begin{tabular}{ 
                        l | % perturbations
                        S[table-format = 3.4] % mesh-free 
                        l | % stdd
                        S[table-format = 3.4] % feynman
                        S[table-format = 3.4] % volume
                        S[table-format = 3.4] % boundary
                    }
    \toprule
     & \multicolumn{2}{c}{$\D\trackingFunctional_{\text{free}}$} & \multicolumn{1}{c}{$\D\trackingFunctional_{\text{FK}}$} & \multicolumn{1}{c}{$\D\trackingFunctional_{\text{vol}}$} & \multicolumn{1}{c}{$\D\trackingFunctional_{\text{bdry}}$} \\
    \midrule
    $V_1 (x) = x$ & -0.9883820192222084 & \hspace*{-0.25cm}(0.0013) & -0.9897413057026561 & 0.989603598769655 & 0.9899758739058729 \\
    $V_2 (x) = (x_1-x_2, x_2-x_1)^\top$ & -0.5224728727378143 & \hspace*{-0.25cm}(0.0008) & -0.5319122640686904 & 0.5316396030934368 & 0.5322138452070018 \\
    $V_3 (x) = \big(\cos(x_1), \sin(x_2)\big)^\top$ & -0.10060805778779433 & \hspace*{-0.25cm}(0.0005) & -0.10275440391804858 & 0.10264456106433673 & 0.10287157268894756 \\
    $V_4 (x) = (x_1x_2,x_2)^\top$ & -0.29920424342449126 & \hspace*{-0.25cm}(0.0006) & -0.3027205231289931 & 0.3026404133170715 & 0.3028343473683559 \\
    $V_5 (x) = (1,0)^\top$ & 0.47721721024256303 & \hspace*{-0.25cm}(0.0010) & 0.4742946966853197 & -0.47436324746592073 & -0.47426996884789974\\
    $V_6 (x) = x_1\ x$ & 0.4760821965242726 & \hspace*{-0.25cm}(0.0010) & 0.4742499559034586 & -0.4743185430429145 & -0.474225251057891 \\
    $V_7 (x) = (0.3, 0.2)^\top - x$ & 1.2295162349459852 & \hspace*{-0.25cm}(0.0016) & 1.2268886540453143 & -1.2267852225026052 & -1.2271108583298223 \\
    $V_8 (x) = \sin\big(6\arctan(\tfrac{x_1}{x_2})\big) x$ & 0.196902122225113 & \hspace*{-0.25cm}(0.0008) & 0.19627531858764263 & -0.19621570730989138 & -0.19627531858764666 \\
    \bottomrule
    \end{tabular}
    \caption{Shape Derivative Value Comparison}
    \label{tab:TestResults}
\end{table}

Table~\ref{tab:TestResults} presents a comparison between the mesh-free computed values of the probabilistic shape derivative $\D\Phi_{\text{free}}$ and the finite element method based formulations $\D\Phi_{\text{vol}}, \D\Phi_{\text{bdry}}$ and $\D\Phi_{\text{FK}}$. The shape derivative is evaluated for a variety of directions, including classical perturbations in outer normal direction, obliquely pointing outward, contraction to a single point, and perturbations pointing in- and outward.

%	UUU		UPDATE
%\medskip

Adapting\footnote{Differences in signs are due to our definition of $\pertDomain$ as a pre-image. Our definition can be identified with that in \cite{Gangl_Sturm_Taylor_Test} for an appropriate $\widetilde{\dir}\in\pertSpace$ satisfying $\pertDomain = T_\varepsilon^{\widetilde{\dir}}(\Omega)$.} \cite[Section 7.1]{Gangl_Sturm_Taylor_Test} to our setting, a successful test result can be defined as follows.
\begin{definition}[Taylor test]
	Let $\Omega\subset\R^d$ be a bounded domain and $\dir\in\pertSpace$. Let $\mathcal{J}$ denote a shape functional as given in \eqref{Def:Shape_Functional} with shape derivative $\D \mathcal{J}$ as in Definition \ref{Def:ShapeDerivative} and set
	\begin{align}
		\mathcal{E}(\mathcal{J};\varepsilon)\defined\big|\mathcal{J}\big(\Omega_\epsilon^\dir\big)-\mathcal{J}(\Omega)+\varepsilon \D \mathcal{J}(\Omega)[\dir]\big|\quad\text{for }\varepsilon \in [-\varepsilon_0,\varepsilon_0].
	\end{align}
	Then $\D \mathcal{J}$ is said to satisfy the Taylor test for $\Omega$ and $\dir$ if 
	\[
	\mathcal{E}(\mathcal{J}; \varepsilon) = \mathcal{O}(\varepsilon^2) \quad\text{as }\varepsilon\to 0. \closeEqn
	\]
\end{definition}

Figure~\ref{Figure:TaylorTest} displays the results of the Taylor test for the directions in Table~\ref{tab:TestResults}, as well as the domain partition induced by the support of the initial point distributions. 
%
%	UUU		UPDATE		--	ANORDNUNG FIG + TEXT GETAUSCHT
%The results show that the mesh-free representation is consistent and competitive with classical approaches and generally performs similarly as the corresponding boundary formulation.
%We wish to stress that our benchmark implementation is limited in the sense that $\D\Phi_{\text{free}}$ is evaluated mesh-free on the (exact) unit sphere, whereas the representations $\D\Phi_{\text{vol}}, \D\Phi_{\text{bdry}}$ and $\D\Phi_{\text{FK}}$ as well as the shape functional differences $\Phi\big(\Omega_\varepsilon^\dir\big)-\Phi(\Omega)$ are evaluated using finite elements and, in particular, a mesh discretization of the unit sphere. This leads to a systematic numerical error that becomes increasingly significant for smaller distortion factors ($\varepsilon\to 0$).
%
%Further analysis of possible improvements in the implementation of the probabilistic representation of the shape derivative, as well as possible applications in the context of a stochastic gradient scheme for shape optimization, are left for future research.
%
%
%	UUU		UPDATE
%\medskip
%
\begin{figure}[H]
	\begin{subfigure}{0.32\textwidth}
		\resizebox{\textwidth}{\textwidth}{
			% This file was created with tikzplotlib v0.10.1.
\begin{tikzpicture}

\definecolor{darkgray176}{RGB}{176,176,176}
\definecolor{gray}{RGB}{128,128,128}
\definecolor{lightgray204}{RGB}{204,204,204}
\definecolor{limegreen}{RGB}{50,205,50}
\definecolor{yellow}{RGB}{255,255,0}

\begin{axis}[
legend cell align={left},
legend style={
  fill opacity=0.8,
  draw opacity=1,
  text opacity=1,
  at={(0.03,0.97)},
  anchor=north west,
  draw=lightgray204
},
log basis x={10},
log basis y={10},
tick align=outside,
tick pos=left,
title={},
x grid style={darkgray176},
xlabel={\(\displaystyle \varepsilon\)},
xmajorgrids,
xmin=3.33505922059178e-05, xmax=0.146408569594563,
xmode=log,
xtick style={color=black},
xtick={1e-06,1e-05,0.0001,0.001,0.01,0.1,1,10},
xticklabel style={rotate=30.0,anchor=east},
xticklabels={
  \(\displaystyle {10^{-6}}\),
  \(\displaystyle {10^{-5}}\),
  \(\displaystyle {10^{-4}}\),
  \(\displaystyle {10^{-3}}\),
  \(\displaystyle {10^{-2}}\),
  \(\displaystyle {10^{-1}}\),
  \(\displaystyle {10^{0}}\),
  \(\displaystyle {10^{1}}\)
},
y grid style={darkgray176},
ylabel={\(\displaystyle \varepsilon^2\)},
ymajorgrids,
ymin=1.21645534749323e-10, ymax=0.089492176345068,
ymode=log,
ytick style={color=black},
ytick={1e-11,1e-10,1e-09,1e-08,1e-07,1e-06,1e-05,0.0001,0.001,0.01,0.1,1},
yticklabels={
  \(\displaystyle {10^{-11}}\),
  \(\displaystyle {10^{-10}}\),
  \(\displaystyle {10^{-9}}\),
  \(\displaystyle {10^{-8}}\),
  \(\displaystyle {10^{-7}}\),
  \(\displaystyle {10^{-6}}\),
  \(\displaystyle {10^{-5}}\),
  \(\displaystyle {10^{-4}}\),
  \(\displaystyle {10^{-3}}\),
  \(\displaystyle {10^{-2}}\),
  \(\displaystyle {10^{-1}}\),
  \(\displaystyle {10^{0}}\)
}
]
\addplot [line width=0.44pt, gray, dash pattern=on 7.04pt off 1.76pt on 1.1pt off 1.76pt]
table {%
0.1 0.01
0.05 0.0025
0.025 0.000625
0.0125 0.00015625
0.00625 3.90625e-05
0.003125 9.765625e-06
0.0015625 2.44140625e-06
0.00078125 6.103515625e-07
0.000390625 1.52587890625e-07
0.0001953125 3.814697265625e-08
9.765625e-05 9.5367431640625e-09
4.8828125e-05 2.38418579101563e-09
};
\addlegendentry{$\varepsilon^2$}
\addplot [line width=0.7pt, blue]
table {%
0.1 0.0352437395961115
0.05 0.00804420661325708
0.025 0.00192185640868384
0.0125 0.000469131508596066
0.00625 0.000115571517017174
0.003125 2.85193372286455e-05
0.0015625 7.00258052233407e-06
0.00078125 1.69441145559009e-06
0.000390625 3.96407289785829e-07
0.0001953125 8.56190140248632e-08
9.765625e-05 1.46777103841259e-08
4.8828125e-05 3.07699992330331e-10
};
\addlegendentry{$\D\Phi_{\text{FK}}$}
\addplot [line width=0.7pt, red, dash pattern=on 17.5pt off 8.75pt]
table {%
0.1 0.0352575102894116
0.05 0.00805109195990714
0.025 0.00192529908200886
0.0125 0.000470852845258579
0.00625 0.000116432185348431
0.003125 2.89496713942738e-05
0.0015625 7.21774760514826e-06
0.00078125 1.80199499699718e-06
0.000390625 4.50199060489376e-07
0.0001953125 1.12514899376637e-07
9.765625e-05 2.81256530600127e-08
4.8828125e-05 7.03167133027371e-09
};
\addlegendentry{$\D\Phi_{\text{vol}}$}
\addplot [line width=0.7pt, yellow, dash pattern=on 8.75pt off 17.5pt]
table {%
0.1 0.0352202827757898
0.05 0.00803247820309624
0.025 0.00191599220360341
0.0125 0.000466199406055854
0.00625 0.000114105465747068
0.003125 2.77863115935926e-05
0.0015625 6.63606770480766e-06
0.00078125 1.51115504682688e-06
0.000390625 3.04779085404225e-07
0.0001953125 3.98049118340612e-08
9.765625e-05 8.22934071127506e-09
4.8828125e-05 1.11458255553702e-08
};
\addlegendentry{$\D\Phi_{\text{bdry}}$}
\addplot [draw=limegreen, fill=limegreen, mark=triangle*, only marks]
table{%
x  y
0.1 0.0353796682441562
0.05 0.00811217093727946
0.025 0.00195583857069503
0.0125 0.000486122589601661
0.00625 0.000124067057519972
0.003125 3.27671074800443e-05
0.0015625 9.12646564803351e-06
0.00078125 2.75635401843981e-06
0.000390625 9.27378571210689e-07
0.0001953125 3.51104654737293e-07
9.765625e-05 1.47420530740341e-07
4.8828125e-05 6.66791101704378e-08
};
\addlegendentry{$\D\Phi_{\text{free}}$}
\end{axis}

\end{tikzpicture}
		}
		\subcaption*{$V_1$ -- outer normal}
	\end{subfigure}
	\hfill
	\begin{subfigure}{0.32\textwidth}
		\centering
		\resizebox{\textwidth}{\textwidth}{
			% This file was created with tikzplotlib v0.10.1.
\begin{tikzpicture}

\definecolor{darkgray176}{RGB}{176,176,176}
\definecolor{gray}{RGB}{128,128,128}
\definecolor{lightgray204}{RGB}{204,204,204}
\definecolor{limegreen}{RGB}{50,205,50}
\definecolor{yellow}{RGB}{255,255,0}

\begin{axis}[
legend cell align={left},
legend style={
  fill opacity=0.8,
  draw opacity=1,
  text opacity=1,
  at={(0.03,0.97)},
  anchor=north west,
  draw=lightgray204
},
log basis x={10},
log basis y={10},
tick align=outside,
tick pos=left,
title={},
x grid style={darkgray176},
xlabel={\(\displaystyle \varepsilon\)},
xmajorgrids,
xmin=3.33505922059178e-05, xmax=0.146408569594563,
xmode=log,
xtick style={color=black},
xtick={1e-06,1e-05,0.0001,0.001,0.01,0.1,1,10},
xticklabel style={rotate=30.0,anchor=east},
xticklabels={
  \(\displaystyle {10^{-6}}\),
  \(\displaystyle {10^{-5}}\),
  \(\displaystyle {10^{-4}}\),
  \(\displaystyle {10^{-3}}\),
  \(\displaystyle {10^{-2}}\),
  \(\displaystyle {10^{-1}}\),
  \(\displaystyle {10^{0}}\),
  \(\displaystyle {10^{1}}\)
},
y grid style={darkgray176},
ylabel={\(\displaystyle \varepsilon^2\)},
ymajorgrids,
ymin=1.03370707433198e-09, ymax=0.0215103629614632,
ymode=log,
ytick style={color=black},
ytick={1e-10,1e-09,1e-08,1e-07,1e-06,1e-05,0.0001,0.001,0.01,0.1,1},
yticklabels={
  \(\displaystyle {10^{-10}}\),
  \(\displaystyle {10^{-9}}\),
  \(\displaystyle {10^{-8}}\),
  \(\displaystyle {10^{-7}}\),
  \(\displaystyle {10^{-6}}\),
  \(\displaystyle {10^{-5}}\),
  \(\displaystyle {10^{-4}}\),
  \(\displaystyle {10^{-3}}\),
  \(\displaystyle {10^{-2}}\),
  \(\displaystyle {10^{-1}}\),
  \(\displaystyle {10^{0}}\)
}
]
\addplot [line width=0.44pt, gray, dash pattern=on 7.04pt off 1.76pt on 1.1pt off 1.76pt]
table {%
0.1 0.01
0.05 0.0025
0.025 0.000625
0.0125 0.00015625
0.00625 3.90625e-05
0.003125 9.765625e-06
0.0015625 2.44140625e-06
0.00078125 6.103515625e-07
0.000390625 1.52587890625e-07
0.0001953125 3.814697265625e-08
9.765625e-05 9.5367431640625e-09
4.8828125e-05 2.38418579101563e-09
};
\addlegendentry{$\varepsilon^2$}
\addplot [line width=0.7pt, blue]
table {%
0.1 0.00894379795784506
0.05 0.00228797916483724
0.025 0.000573341222251188
0.0125 0.000142030871878637
0.00625 3.46927881762615e-05
0.003125 8.25099494833944e-06
0.0015625 1.85016351477189e-06
0.00078125 3.56085332561948e-07
0.000390625 3.57746120408604e-08
0.0001953125 1.76819874623765e-08
9.765625e-05 1.77335961078749e-08
4.8828125e-05 1.10899827458338e-08
};
\addlegendentry{$\D\Phi_{\text{FK}}$}
\addplot [line width=0.7pt, red, dash pattern=on 17.5pt off 8.75pt]
table {%
0.1 0.00897106405537042
0.05 0.00230161221359992
0.025 0.000580157746632528
0.0125 0.000145439134069307
0.00625 3.63969192715965e-05
0.003125 9.10306049600696e-06
0.0015625 2.27619628860565e-06
0.00078125 5.69101719478829e-07
0.000390625 1.42282805499301e-07
0.0001953125 3.55721092668438e-08
9.765625e-05 8.89345225673526e-09
4.8828125e-05 2.22354143647132e-09
};
\addlegendentry{$\D\Phi_{\text{vol}}$}
\addplot [line width=0.7pt, yellow, dash pattern=on 8.75pt off 17.5pt]
table {%
0.1 0.00891363984401392
0.05 0.00227290010792167
0.025 0.000565801693793404
0.0125 0.000138261107649745
0.00625 3.28079060618156e-05
0.003125 7.30855389111649e-06
0.0015625 1.37894298616042e-06
0.00078125 1.2047506825621e-07
0.000390625 8.20305201120082e-08
0.0001953125 7.65845535388108e-08
9.765625e-05 4.71848791460921e-08
4.8828125e-05 2.58156242649423e-08
};
\addlegendentry{$\D\Phi_{\text{bdry}}$}
\addplot [draw=limegreen, fill=limegreen, mark=triangle*, only marks]
table{%
x  y
0.1 0.00988773709093267
0.05 0.00275994873138104
0.025 0.000809326005523092
0.0125 0.000260023263514588
0.00625 9.36889839942374e-05
0.003125 3.77490928573274e-05
0.0015625 1.65992124692659e-05
0.00078125 7.73060980980894e-06
0.000390625 3.72303685066436e-06
0.0001953125 1.82594913184937e-06
9.765625e-05 9.04081963547999e-07
4.8828125e-05 4.49817797082103e-07
};
\addlegendentry{$\D\Phi_{\text{free}}$}
\end{axis}
\end{tikzpicture}
		}
		\subcaption*{$V_2$ -- outward along diagonal}
	\end{subfigure}
	\hfill
	\begin{subfigure}{0.32\textwidth}
		\centering
		\resizebox{\textwidth}{\textwidth}{
			% This file was created with tikzplotlib v0.10.1.
\begin{tikzpicture}

\definecolor{darkgray176}{RGB}{176,176,176}
\definecolor{gray}{RGB}{128,128,128}
\definecolor{lightgray204}{RGB}{204,204,204}
\definecolor{limegreen}{RGB}{50,205,50}
\definecolor{yellow}{RGB}{255,255,0}

\begin{axis}[
legend cell align={left},
legend style={
  fill opacity=0.8,
  draw opacity=1,
  text opacity=1,
  at={(0.03,0.97)},
  anchor=north west,
  draw=lightgray204
},
log basis x={10},
log basis y={10},
tick align=outside,
tick pos=left,
title={},
x grid style={darkgray176},
xlabel={\(\displaystyle \varepsilon\)},
xmajorgrids,
xmin=3.33505922059178e-05, xmax=0.146408569594563,
xmode=log,
xtick style={color=black},
xtick={1e-06,1e-05,0.0001,0.001,0.01,0.1,1,10},
xticklabel style={rotate=30.0,anchor=east},
xticklabels={
  \(\displaystyle {10^{-6}}\),
  \(\displaystyle {10^{-5}}\),
  \(\displaystyle {10^{-4}}\),
  \(\displaystyle {10^{-3}}\),
  \(\displaystyle {10^{-2}}\),
  \(\displaystyle {10^{-1}}\),
  \(\displaystyle {10^{0}}\),
  \(\displaystyle {10^{1}}\)
},
y grid style={darkgray176},
ylabel={\(\displaystyle \varepsilon^2\)},
ymajorgrids,
ymin=3.32113908383791e-11, ymax=0.0253365941654032,
ymode=log,
ytick style={color=black},
ytick={1e-13,1e-11,1e-09,1e-07,1e-05,0.001,0.1,10},
yticklabels={
  \(\displaystyle {10^{-13}}\),
  \(\displaystyle {10^{-11}}\),
  \(\displaystyle {10^{-9}}\),
  \(\displaystyle {10^{-7}}\),
  \(\displaystyle {10^{-5}}\),
  \(\displaystyle {10^{-3}}\),
  \(\displaystyle {10^{-1}}\),
  \(\displaystyle {10^{1}}\)
}
]
\addplot [line width=0.44pt, gray, dash pattern=on 7.04pt off 1.76pt on 1.1pt off 1.76pt]
table {%
0.1 0.01
0.05 0.0025
0.025 0.000625
0.0125 0.00015625
0.00625 3.90625e-05
0.003125 9.765625e-06
0.0015625 2.44140625e-06
0.00078125 6.103515625e-07
0.000390625 1.52587890625e-07
0.0001953125 3.814697265625e-08
9.765625e-05 9.5367431640625e-09
4.8828125e-05 2.38418579101563e-09
};
\addlegendentry{$\varepsilon^2$}
\addplot [line width=0.7pt, blue]
table {%
0.1 0.000600019818797265
0.05 0.00011476676083812
0.025 2.32227296851849e-05
0.0125 4.60696601811412e-06
0.00625 7.44532189537995e-07
0.003125 6.53051865291991e-09
0.0015625 8.51702583463314e-08
0.00078125 6.43193568962793e-08
0.000390625 3.75464134090057e-08
0.0001953125 2.01140379838423e-08
9.765625e-05 1.03914958270388e-08
4.8828125e-05 5.27927423826504e-09
};
\addlegendentry{$\D\Phi_{\text{FK}}$}
\addplot [line width=0.7pt, red, dash pattern=on 17.5pt off 8.75pt]
table {%
0.1 0.00061100410416845
0.05 0.000120258903523712
0.025 2.59688010279813e-05
0.0125 5.98000168951229e-06
0.00625 1.43105002523708e-06
0.003125 3.49789436502462e-07
0.0015625 8.64592005784398e-08
0.00078125 2.14953725661063e-08
0.000390625 5.36095132218709e-09
0.0001953125 1.33964438175406e-09
9.765625e-05 3.3534535575936e-10
4.8828125e-05 8.41463531340604e-11
};
\addlegendentry{$\D\Phi_{\text{vol}}$}
\addplot [line width=0.7pt, yellow, dash pattern=on 8.75pt off 17.5pt]
table {%
0.1 0.000588302941707365
0.05 0.00010890832229317
0.025 2.02935104127101e-05
0.0125 3.14235638187673e-06
0.00625 1.2227371419299e-08
0.003125 3.59621890406428e-07
0.0015625 2.68246462876005e-07
0.00078125 1.55857459161116e-07
0.000390625 8.33154645414242e-08
0.0001953125 4.29985635500516e-08
9.765625e-05 2.18337586101435e-08
4.8828125e-05 1.10004056298173e-08
};
\addlegendentry{$\D\Phi_{\text{bdry}}$}
\addplot [draw=limegreen, fill=limegreen, mark=triangle*, only marks]
table{%
x  y
0.1 0.000814654431822688
0.05 0.000222084067350832
0.025 7.68813829415409e-05
0.0125 3.14362926462921e-05
0.00625 1.4159195503627e-05
0.003125 6.71386217569741e-06
0.0015625 3.26849557017592e-06
0.00078125 1.61251355736484e-06
0.000390625 8.00870043721556e-07
0.0001953125 3.99094190581439e-07
9.765625e-05 1.99212618455602e-07
4.8828125e-05 9.95227829030552e-08
};
\addlegendentry{$\D\Phi_{\text{free}}$}
\end{axis}

\end{tikzpicture}
		}
		\subcaption*{$V_3$ -- in- and outward pointing }
	\end{subfigure}
	
	\smallskip
	
	\begin{subfigure}{0.32\textwidth}
		\centering
		\resizebox{\textwidth}{\textwidth}{
			 

% This file was created with tikzplotlib v0.10.1.
\begin{tikzpicture}

\definecolor{darkgray176}{RGB}{176,176,176}
\definecolor{gray}{RGB}{128,128,128}
\definecolor{lightgray204}{RGB}{204,204,204}
\definecolor{limegreen}{RGB}{50,205,50}
\definecolor{yellow}{RGB}{255,255,0}

\begin{axis}[
legend cell align={left},
legend style={
  fill opacity=0.8,
  draw opacity=1,
  text opacity=1,
  at={(0.03,0.97)},
  anchor=north west,
  draw=lightgray204
},
log basis x={10},
log basis y={10},
tick align=outside,
tick pos=left,
title={},
x grid style={darkgray176},
xlabel={\(\displaystyle \varepsilon\)},
xmajorgrids,
xmin=3.33505922059178e-05, xmax=0.146408569594563,
xmode=log,
xtick style={color=black},
xtick={1e-06,1e-05,0.0001,0.001,0.01,0.1,1,10},
xticklabel style={rotate=30.0,anchor=east},
xticklabels={
  \(\displaystyle {10^{-6}}\),
  \(\displaystyle {10^{-5}}\),
  \(\displaystyle {10^{-4}}\),
  \(\displaystyle {10^{-3}}\),
  \(\displaystyle {10^{-2}}\),
  \(\displaystyle {10^{-1}}\),
  \(\displaystyle {10^{0}}\),
  \(\displaystyle {10^{1}}\)
},
y grid style={darkgray176},
ylabel={\(\displaystyle \varepsilon^2\)},
ymajorgrids,
ymin=8.5886140502052e-12, ymax=0.0270220285334539,
ymode=log,
ytick style={color=black},
ytick={1e-14,1e-12,1e-10,1e-08,1e-06,0.0001,0.01,1,100},
yticklabels={
  \(\displaystyle {10^{-14}}\),
  \(\displaystyle {10^{-12}}\),
  \(\displaystyle {10^{-10}}\),
  \(\displaystyle {10^{-8}}\),
  \(\displaystyle {10^{-6}}\),
  \(\displaystyle {10^{-4}}\),
  \(\displaystyle {10^{-2}}\),
  \(\displaystyle {10^{0}}\),
  \(\displaystyle {10^{2}}\)
}
]
\addplot [line width=0.44pt, gray, dash pattern=on 7.04pt off 1.76pt on 1.1pt off 1.76pt]
table {%
0.1 0.01
0.05 0.0025
0.025 0.000625
0.0125 0.00015625
0.00625 3.90625e-05
0.003125 9.765625e-06
0.0015625 2.44140625e-06
0.00078125 6.103515625e-07
0.000390625 1.52587890625e-07
0.0001953125 3.814697265625e-08
9.765625e-05 9.5367431640625e-09
4.8828125e-05 2.38418579101563e-09
};
\addlegendentry{$\varepsilon^2$}
\addplot [line width=0.7pt, blue]
table {%
0.1 0.000151977920402752
0.05 1.05605947748333e-05
0.025 1.69109871439588e-07
0.0125 2.43841570685133e-08
0.00625 1.83540614200062e-07
0.003125 1.634443252789e-07
0.0015625 1.0249215712764e-07
0.00078125 5.67955450751431e-08
0.000390625 2.98297923181055e-08
0.0001953125 1.52784846239439e-08
9.765625e-05 7.73083732132692e-09
4.8828125e-05 3.88840373284237e-09
};
\addlegendentry{$\D\Phi_{\text{FK}}$}
\addplot [line width=0.7pt, red, dash pattern=on 17.5pt off 8.75pt]
table {%
0.1 0.00014396693921059
0.05 6.55510417875253e-06
0.025 2.17185516947997e-06
0.0125 1.0257568060887e-06
0.00625 3.17145710310033e-07
0.003125 8.68988369761474e-08
0.0015625 2.26794239998837e-08
0.00078125 5.79024548861867e-09
0.000390625 1.46310296377536e-09
0.0001953125 3.67963016996529e-10
9.765625e-05 9.23864991433032e-11
4.8828125e-05 2.32081773927467e-11
};
\addlegendentry{$\D\Phi_{\text{vol}}$}
\addplot [line width=0.7pt, yellow, dash pattern=on 8.75pt off 17.5pt]
table {%
0.1 0.000163360344339031
0.05 1.62518067429729e-05
0.025 2.67649611263021e-06
0.0125 1.39841883496639e-06
0.00625 8.94942110217511e-07
0.003125 5.19145073287625e-07
0.0015625 2.80342531132002e-07
0.00078125 1.45720732077324e-07
0.000390625 7.42923858191961e-08
0.0001953125 3.75097813744892e-08
9.765625e-05 1.88464856965996e-08
4.8828125e-05 9.44622792047869e-09
};
\addlegendentry{$\D\Phi_{\text{bdry}}$}
\addplot [draw=limegreen, fill=limegreen, mark=triangle*, only marks]
table{%
x  y
0.1 0.000199650050047434
0.05 0.00016525339045026
0.025 8.8076102483986e-05
0.0125 4.39778804633417e-05
0.00625 2.17932075389365e-05
0.003125 1.08249297512894e-05
0.0015625 5.39169488115651e-06
0.00078125 2.69029797406693e-06
0.000390625 1.34371696725293e-06
0.0001953125 6.71494895161575e-07
9.765625e-05 3.35655852571433e-07
4.8828125e-05 1.67804941213537e-07
};
\addlegendentry{$\D\Phi_{\text{free}}$}
\end{axis}

\end{tikzpicture}
		}
		\subcaption*{$V_4$ -- test direction from \cite{Gangl_Sturm_Taylor_Test}}
	\end{subfigure}
	\hfill
	\begin{subfigure}{0.32\textwidth}
		\centering
		\resizebox{\textwidth}{\textwidth}{
			% This file was created with tikzplotlib v0.10.1.
\begin{tikzpicture}

\definecolor{darkgray176}{RGB}{176,176,176}
\definecolor{gray}{RGB}{128,128,128}
\definecolor{lightgray204}{RGB}{204,204,204}
\definecolor{limegreen}{RGB}{50,205,50}
\definecolor{yellow}{RGB}{255,255,0}

\begin{axis}[
legend cell align={left},
legend style={
  fill opacity=0.8,
  draw opacity=1,
  text opacity=1,
  at={(0.03,0.97)},
  anchor=north west,
  draw=lightgray204
},
log basis x={10},
log basis y={10},
tick align=outside,
tick pos=left,
title={},
x grid style={darkgray176},
xlabel={\(\displaystyle \varepsilon\)},
xmajorgrids,
xmin=3.33505922059178e-05, xmax=0.146408569594563,
xmode=log,
xtick style={color=black},
xtick={1e-06,1e-05,0.0001,0.001,0.01,0.1,1,10},
xticklabel style={rotate=30.0,anchor=east},
xticklabels={
  \(\displaystyle {10^{-6}}\),
  \(\displaystyle {10^{-5}}\),
  \(\displaystyle {10^{-4}}\),
  \(\displaystyle {10^{-3}}\),
  \(\displaystyle {10^{-2}}\),
  \(\displaystyle {10^{-1}}\),
  \(\displaystyle {10^{0}}\),
  \(\displaystyle {10^{1}}\)
},
y grid style={darkgray176},
ylabel={\(\displaystyle \varepsilon^2\)},
ymajorgrids,
ymin=3.59279772007365e-10, ymax=0.02262055215466,
ymode=log,
ytick style={color=black},
ytick={1e-11,1e-10,1e-09,1e-08,1e-07,1e-06,1e-05,0.0001,0.001,0.01,0.1,1},
yticklabels={
  \(\displaystyle {10^{-11}}\),
  \(\displaystyle {10^{-10}}\),
  \(\displaystyle {10^{-9}}\),
  \(\displaystyle {10^{-8}}\),
  \(\displaystyle {10^{-7}}\),
  \(\displaystyle {10^{-6}}\),
  \(\displaystyle {10^{-5}}\),
  \(\displaystyle {10^{-4}}\),
  \(\displaystyle {10^{-3}}\),
  \(\displaystyle {10^{-2}}\),
  \(\displaystyle {10^{-1}}\),
  \(\displaystyle {10^{0}}\)
}
]
\addplot [line width=0.44pt, gray, dash pattern=on 7.04pt off 1.76pt on 1.1pt off 1.76pt]
table {%
0.1 0.01
0.05 0.0025
0.025 0.000625
0.0125 0.00015625
0.00625 3.90625e-05
0.003125 9.765625e-06
0.0015625 2.44140625e-06
0.00078125 6.103515625e-07
0.000390625 1.52587890625e-07
0.0001953125 3.814697265625e-08
9.765625e-05 9.5367431640625e-09
4.8828125e-05 2.38418579101563e-09
};
\addlegendentry{$\varepsilon^2$}
\addplot [line width=0.7pt, blue]
table {%
0.1 0.00950701953457835
0.05 0.00251122838489344
0.025 0.000644663677002852
0.0125 0.000162994752460728
0.00625 4.08193015753572e-05
0.003125 1.01334772948717e-05
0.0015625 2.48428970530139e-06
0.00078125 5.94852314733739e-07
0.000390625 1.35392696454278e-07
0.0001953125 2.71616754976746e-08
9.765625e-05 3.44387933727102e-09
4.8828125e-05 8.12710682080695e-10
};
\addlegendentry{$\D\Phi_{\text{FK}}$}
\addplot [line width=0.7pt, red, dash pattern=on 17.5pt off 8.75pt]
table {%
0.1 0.00951387461263845
0.05 0.00251465592392349
0.025 0.000646377446517877
0.0125 0.00016385163721824
0.00625 4.12477439541135e-05
0.003125 1.03476984842498e-05
0.0015625 2.59140029999045e-06
0.00078125 6.48407612078271e-07
0.000390625 1.62170345126544e-07
0.0001953125 4.05504998338076e-08
9.765625e-05 1.01382915053375e-08
4.8828125e-05 2.53449540195256e-09
};
\addlegendentry{$\D\Phi_{\text{vol}}$}
\addplot [line width=0.7pt, yellow, dash pattern=on 8.75pt off 17.5pt]
table {%
0.1 0.00950454675083635
0.05 0.00250999199302244
0.025 0.000644045481067352
0.0125 0.000162685654492978
0.00625 4.06647525914823e-05
0.003125 1.00562028029342e-05
0.0015625 2.44565245933266e-06
0.00078125 5.75533691749377e-07
0.000390625 1.25733384962097e-07
0.0001953125 2.23320197515842e-08
9.765625e-05 1.02905146422583e-09
4.8828125e-05 2.02012461860329e-09
};
\addlegendentry{$\D\Phi_{\text{bdry}}$}
\addplot [draw=limegreen, fill=limegreen, mark=triangle*, only marks]
table{%
x  y
0.1 0.00979927089030268
0.05 0.0026573540627556
0.025 0.000717726515933935
0.0125 0.000199526171926269
0.00625 5.9085011308128e-05
0.003125 1.9266332161257e-05
0.0015625 7.05071713849408e-06
0.00078125 2.87806603133008e-06
0.000390625 1.27699955475245e-06
0.0001953125 5.97965104646761e-07
9.765625e-05 2.88845593911814e-07
4.8828125e-05 1.41888146605191e-07
};
\addlegendentry{$\D\Phi_{\text{free}}$}
\end{axis}

\end{tikzpicture}
		}
		\subcaption*{$V_5$ -- constant shift }
	\end{subfigure}
	\hfill    
	\begin{subfigure}{0.32\textwidth}
		\centering
		\resizebox{\textwidth}{\textwidth}{
			% This file was created with tikzplotlib v0.10.1.
\begin{tikzpicture}

\definecolor{darkgray176}{RGB}{176,176,176}
\definecolor{gray}{RGB}{128,128,128}
\definecolor{lightgray204}{RGB}{204,204,204}
\definecolor{limegreen}{RGB}{50,205,50}
\definecolor{yellow}{RGB}{255,255,0}

\begin{axis}[
legend cell align={left},
legend style={
  fill opacity=0.8,
  draw opacity=1,
  text opacity=1,
  at={(0.03,0.97)},
  anchor=north west,
  draw=lightgray204
},
log basis x={10},
log basis y={10},
tick align=outside,
tick pos=left,
title={},
x grid style={darkgray176},
xlabel={\(\displaystyle \varepsilon\)},
xmajorgrids,
xmin=3.33505922059178e-05, xmax=0.146408569594563,
xmode=log,
xtick style={color=black},
xtick={1e-06,1e-05,0.0001,0.001,0.01,0.1,1,10},
xticklabel style={rotate=30.0,anchor=east},
xticklabels={
  \(\displaystyle {10^{-6}}\),
  \(\displaystyle {10^{-5}}\),
  \(\displaystyle {10^{-4}}\),
  \(\displaystyle {10^{-3}}\),
  \(\displaystyle {10^{-2}}\),
  \(\displaystyle {10^{-1}}\),
  \(\displaystyle {10^{0}}\),
  \(\displaystyle {10^{1}}\)
},
y grid style={darkgray176},
ylabel={\(\displaystyle \varepsilon^2\)},
ymajorgrids,
ymin=9.31933952382929e-11, ymax=0.0282679889551096,
ymode=log,
ytick style={color=black},
ytick={1e-13,1e-11,1e-09,1e-07,1e-05,0.001,0.1,10},
yticklabels={
  \(\displaystyle {10^{-13}}\),
  \(\displaystyle {10^{-11}}\),
  \(\displaystyle {10^{-9}}\),
  \(\displaystyle {10^{-7}}\),
  \(\displaystyle {10^{-5}}\),
  \(\displaystyle {10^{-3}}\),
  \(\displaystyle {10^{-1}}\),
  \(\displaystyle {10^{1}}\)
}
]
\addplot [line width=0.44pt, gray, dash pattern=on 7.04pt off 1.76pt on 1.1pt off 1.76pt]
table {%
0.1 0.01
0.05 0.0025
0.025 0.000625
0.0125 0.00015625
0.00625 3.90625e-05
0.003125 9.765625e-06
0.0015625 2.44140625e-06
0.00078125 6.103515625e-07
0.000390625 1.52587890625e-07
0.0001953125 3.814697265625e-08
9.765625e-05 9.5367431640625e-09
4.8828125e-05 2.38418579101563e-09
};
\addlegendentry{$\varepsilon^2$}
\addplot [line width=0.7pt, blue]
table {%
0.1 0.0114514322669706
0.05 0.00305685288244662
0.025 0.000789668848257987
0.0125 0.000200399389373663
0.00625 5.03196197597787e-05
0.003125 1.25274604966879e-05
0.0015625 3.08513496147602e-06
0.00078125 7.4534152998308e-07
0.000390625 1.73041282158288e-07
0.0001953125 3.65728419740894e-08
9.765625e-05 5.79441447450137e-09
4.8828125e-05 2.26426100847268e-10
};
\addlegendentry{$\D\Phi_{\text{FK}}$}
\addplot [line width=0.7pt, red, dash pattern=on 17.5pt off 8.75pt]
table {%
0.1 0.0114582909809162
0.05 0.00306028223941941
0.025 0.000791383526744384
0.0125 0.000201256728616862
0.00625 5.0748289381378e-05
0.003125 1.27417953074875e-05
0.0015625 3.19230236687585e-06
0.00078125 7.98925232682992e-07
0.000390625 1.99833133508243e-07
0.0001953125 4.99687676490672e-08
9.765625e-05 1.24923773119903e-08
4.8828125e-05 3.12255531789718e-09
};
\addlegendentry{$\D\Phi_{\text{vol}}$}
\addplot [line width=0.7pt, yellow, dash pattern=on 8.75pt off 17.5pt]
table {%
0.1 0.0114489617824138
0.05 0.00305561764016824
0.025 0.000789051227118797
0.0125 0.000200090578804068
0.00625 5.01652144749812e-05
0.003125 1.24502578542891e-05
0.0015625 3.04653364027665e-06
0.00078125 7.26040869383391e-07
0.000390625 1.63390951858443e-07
0.0001953125 3.17476768241671e-08
9.765625e-05 3.38183189954019e-09
4.8828125e-05 1.43271738832786e-09
};
\addlegendentry{$\D\Phi_{\text{bdry}}$}
\addplot [draw=limegreen, fill=limegreen, mark=triangle*, only marks]
table{%
x  y
0.1 0.011634656329052
0.05 0.00314846491348732
0.025 0.000835474863778336
0.0125 0.000223302397133838
0.00625 6.17711236398661e-05
0.003125 1.82532124367316e-05
0.0015625 5.94801093149787e-06
0.00078125 2.176779514994e-06
0.000390625 8.8876027466375e-07
0.0001953125 3.9443233822682e-07
9.765625e-05 1.84724162600867e-07
4.8828125e-05 8.92384479623355e-08
};
\addlegendentry{$\D\Phi_{\text{free}}$}
\end{axis}

\end{tikzpicture}
		}
		\subcaption*{$V_6$ -- scaled outer normal}
	\end{subfigure}
	
	\smallskip
	
	\begin{subfigure}{0.32\textwidth}
		\centering
		\resizebox{\textwidth}{\textwidth}{
			% This file was created with tikzplotlib v0.10.1.
\begin{tikzpicture}

\definecolor{darkgray176}{RGB}{176,176,176}
\definecolor{gray}{RGB}{128,128,128}
\definecolor{lightgray204}{RGB}{204,204,204}
\definecolor{limegreen}{RGB}{50,205,50}
\definecolor{yellow}{RGB}{255,255,0}

\begin{axis}[
legend cell align={left},
legend style={
  fill opacity=0.8,
  draw opacity=1,
  text opacity=1,
  at={(0.03,0.97)},
  anchor=north west,
  draw=lightgray204
},
log basis x={10},
log basis y={10},
tick align=outside,
tick pos=left,
title={},
x grid style={darkgray176},
xlabel={\(\displaystyle \varepsilon\)},
xmajorgrids,
xmin=3.33505922059178e-05, xmax=0.146408569594563,
xmode=log,
xtick style={color=black},
xtick={1e-06,1e-05,0.0001,0.001,0.01,0.1,1,10},
xticklabel style={rotate=30.0,anchor=east},
xticklabels={
  \(\displaystyle {10^{-6}}\),
  \(\displaystyle {10^{-5}}\),
  \(\displaystyle {10^{-4}}\),
  \(\displaystyle {10^{-3}}\),
  \(\displaystyle {10^{-2}}\),
  \(\displaystyle {10^{-1}}\),
  \(\displaystyle {10^{0}}\),
  \(\displaystyle {10^{1}}\)
},
y grid style={darkgray176},
ylabel={\(\displaystyle \varepsilon^2\)},
ymajorgrids,
ymin=1.04003477871596e-09, ymax=0.0877964568545007,
ymode=log,
ytick style={color=black},
ytick={1e-10,1e-09,1e-08,1e-07,1e-06,1e-05,0.0001,0.001,0.01,0.1,1},
yticklabels={
  \(\displaystyle {10^{-10}}\),
  \(\displaystyle {10^{-9}}\),
  \(\displaystyle {10^{-8}}\),
  \(\displaystyle {10^{-7}}\),
  \(\displaystyle {10^{-6}}\),
  \(\displaystyle {10^{-5}}\),
  \(\displaystyle {10^{-4}}\),
  \(\displaystyle {10^{-3}}\),
  \(\displaystyle {10^{-2}}\),
  \(\displaystyle {10^{-1}}\),
  \(\displaystyle {10^{0}}\)
}
]
\addplot [line width=0.44pt, gray, dash pattern=on 7.04pt off 1.76pt on 1.1pt off 1.76pt]
table {%
0.1 0.01
0.05 0.0025
0.025 0.000625
0.0125 0.00015625
0.00625 3.90625e-05
0.003125 9.765625e-06
0.0015625 2.44140625e-06
0.00078125 6.103515625e-07
0.000390625 1.52587890625e-07
0.0001953125 3.814697265625e-08
9.765625e-05 9.5367431640625e-09
4.8828125e-05 2.38418579101563e-09
};
\addlegendentry{$\varepsilon^2$}
\addplot [line width=0.7pt, blue]
table {%
0.1 0.0380360057566123
0.05 0.0106309333313251
0.025 0.00281638447840771
0.0125 0.000725612261107005
0.00625 0.000184413609018146
0.003125 4.66059212179456e-05
0.0015625 1.17751951513199e-05
0.00078125 2.98957381653911e-06
0.000390625 7.68261906165758e-07
0.0001953125 2.02246769840492e-07
9.765625e-05 5.56206885177798e-08
4.8828125e-05 1.64307117309374e-08
};
\addlegendentry{$\D\Phi_{\text{FK}}$}
\addplot [line width=0.7pt, red, dash pattern=on 17.5pt off 8.75pt]
table {%
0.1 0.0380256626023414
0.05 0.0106257617541897
0.025 0.00281379868983998
0.0125 0.000724319366823141
0.00625 0.000183767161876214
0.003125 4.62826976469796e-05
0.0015625 1.16135833658369e-05
0.00078125 2.90876792379761e-06
0.000390625 7.2785895979501e-07
0.0001953125 1.82045296655118e-07
9.765625e-05 4.55199519250928e-08
4.8828125e-05 1.13803434345939e-08
};
\addlegendentry{$\D\Phi_{\text{vol}}$}
\addplot [line width=0.7pt, yellow, dash pattern=on 8.75pt off 17.5pt]
table {%
0.1 0.0380582261850631
0.05 0.0106420435455505
0.025 0.00282193958552041
0.0125 0.000728389814663354
0.00625 0.000185802385796321
0.003125 4.73003096070328e-05
0.0015625 1.21223893458635e-05
0.00078125 3.16317091381092e-06
0.000390625 8.55060454801665e-07
0.0001953125 2.45646044158446e-07
9.765625e-05 7.73203256767565e-08
4.8828125e-05 2.72805303104258e-08
};
\addlegendentry{$\D\Phi_{\text{bdry}}$}
\addplot [draw=limegreen, fill=limegreen, mark=triangle*, only marks]
table{%
x  y
0.1 0.0382987638466794
0.05 0.0107623123763587
0.025 0.00288207400092448
0.0125 0.00075845702236539
0.00625 0.000200835989647339
0.003125 5.48171115325419e-05
0.0015625 1.5880790308618e-05
0.00078125 5.04237139518817e-06
0.000390625 1.79466069549029e-06
0.0001953125 7.15446164502759e-07
9.765625e-05 3.12220385848913e-07
4.8828125e-05 1.44730560396504e-07
};
\addlegendentry{$\D\Phi_{\text{free}}$}
\end{axis}

\end{tikzpicture}
		}
		\subcaption*{$V_7$ -- contraction to a point}
	\end{subfigure}
	\hfill
	\begin{subfigure}{0.32\textwidth}
		\centering
		\resizebox{\textwidth}{\textwidth}{
			% This file was created with tikzplotlib v0.10.1.
\begin{tikzpicture}

\definecolor{darkgray176}{RGB}{176,176,176}
\definecolor{gray}{RGB}{128,128,128}
\definecolor{lightgray204}{RGB}{204,204,204}
\definecolor{limegreen}{RGB}{50,205,50}
\definecolor{yellow}{RGB}{255,255,0}

\begin{axis}[
legend cell align={left},
legend style={
  fill opacity=0.8,
  draw opacity=1,
  text opacity=1,
  at={(0.03,0.97)},
  anchor=north west,
  draw=lightgray204
},
log basis x={10},
log basis y={10},
tick align=outside,
tick pos=left,
title={},
x grid style={darkgray176},
xlabel={\(\displaystyle \varepsilon\)},
xmajorgrids,
xmin=3.33505922059178e-05, xmax=0.146408569594563,
xmode=log,
xtick style={color=black},
xtick={1e-06,1e-05,0.0001,0.001,0.01,0.1,1,10},
xticklabel style={rotate=30.0,anchor=east},
xticklabels={
  \(\displaystyle {10^{-6}}\),
  \(\displaystyle {10^{-5}}\),
  \(\displaystyle {10^{-4}}\),
  \(\displaystyle {10^{-3}}\),
  \(\displaystyle {10^{-2}}\),
  \(\displaystyle {10^{-1}}\),
  \(\displaystyle {10^{0}}\),
  \(\displaystyle {10^{1}}\)
},
y grid style={darkgray176},
ylabel={\(\displaystyle \varepsilon^2\)},
ymajorgrids,
ymin=1.01056645869231e-09, ymax=0.021533566115542,
ymode=log,
ytick style={color=black},
ytick={1e-10,1e-09,1e-08,1e-07,1e-06,1e-05,0.0001,0.001,0.01,0.1,1},
yticklabels={
  \(\displaystyle {10^{-10}}\),
  \(\displaystyle {10^{-9}}\),
  \(\displaystyle {10^{-8}}\),
  \(\displaystyle {10^{-7}}\),
  \(\displaystyle {10^{-6}}\),
  \(\displaystyle {10^{-5}}\),
  \(\displaystyle {10^{-4}}\),
  \(\displaystyle {10^{-3}}\),
  \(\displaystyle {10^{-2}}\),
  \(\displaystyle {10^{-1}}\),
  \(\displaystyle {10^{0}}\)
}
]
\addplot [line width=0.44pt, gray, dash pattern=on 7.04pt off 1.76pt on 1.1pt off 1.76pt]
table {%
0.1 0.01
0.05 0.0025
0.025 0.000625
0.0125 0.00015625
0.00625 3.90625e-05
0.003125 9.765625e-06
0.0015625 2.44140625e-06
0.00078125 6.103515625e-07
0.000390625 1.52587890625e-07
0.0001953125 3.814697265625e-08
9.765625e-05 9.5367431640625e-09
4.8828125e-05 2.38418579101563e-09
};
\addlegendentry{$\varepsilon^2$}
\addplot [line width=0.7pt, blue]
table {%
0.1 0.0088035242166131
0.05 0.00220294818199391
0.025 0.000558736956626131
0.0125 0.000141499629533062
0.00625 3.57767729605341e-05
0.003125 9.06653080582688e-06
0.0015625 2.31700202767582e-06
0.00078125 6.03024026555601e-07
0.000390625 1.6246254210442e-07
0.0001953125 4.64461357244794e-08
9.765625e-05 1.45239233463951e-08
4.8828125e-05 5.08681688668827e-09
};
\addlegendentry{$\D\Phi_{\text{FK}}$}
\addplot [line width=0.7pt, red, dash pattern=on 17.5pt off 8.75pt]
table {%
0.1 0.00879756308883798
0.05 0.00219996761810634
0.025 0.00055724667468235
0.0125 0.000140754488561171
0.00625 3.54042024745887e-05
0.003125 8.88024556285419e-06
0.0015625 2.22385940618948e-06
0.00078125 5.56452715812428e-07
0.000390625 1.39176886732834e-07
0.0001953125 3.48033080386863e-08
9.765625e-05 8.70250950349853e-09
4.8828125e-05 2.17610996524e-09
};
\addlegendentry{$\D\Phi_{\text{vol}}$}
\addplot [line width=0.7pt, yellow, dash pattern=on 8.75pt off 17.5pt]
table {%
0.1 0.0088035242166135
0.05 0.00220294818199411
0.025 0.000558736956626232
0.0125 0.000141499629533112
0.00625 3.57767729605592e-05
0.003125 9.06653080583945e-06
0.0015625 2.31700202768211e-06
0.00078125 6.03024026558745e-07
0.000390625 1.62462542105992e-07
0.0001953125 4.64461357252655e-08
9.765625e-05 1.45239233467881e-08
4.8828125e-05 5.08681688688478e-09
};
\addlegendentry{$\D\Phi_{\text{bdry}}$}
\addplot [draw=limegreen, fill=limegreen, mark=triangle*, only marks]
table{%
x  y
0.1 0.00886620458036014
0.05 0.00223428836386742
0.025 0.00057440704756289
0.0125 0.000149334675001441
0.00625 3.96942956947238e-05
0.003125 1.10252921729217e-05
0.0015625 3.29638271122324e-06
0.00078125 1.09271436832931e-06
0.000390625 4.07307712991275e-07
0.0001953125 1.68868721167907e-07
9.765625e-05 7.57352160681089e-08
4.8828125e-05 3.56924632475452e-08
};
\addlegendentry{$\D\Phi_{\text{free}}$}
\end{axis}

\end{tikzpicture}
		}
		\subcaption*{$V_8$ -- star-shaped domain}
	\end{subfigure}
	\hfill
	\begin{subfigure}{0.32\textwidth}
		\centering
		\resizebox{\textwidth}{\textwidth}{
			\includegraphics{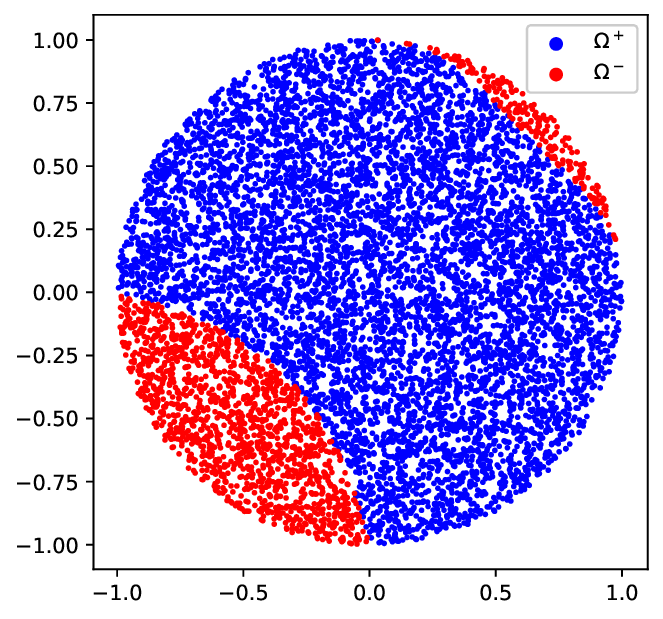}
		}
		\subcaption*{domain partition}
	\end{subfigure}
	\caption{
		%	UUU		UPDATE
		\small{
			Taylor Test Results -- Perturbations are introduced in Table~\ref{tab:TestResults}. For readability the legends simply indicate the type of shape derivative used to compute the error $\mathcal{E}(\D\Phi;\varepsilon)$.
		}
	}
	\label{Figure:TaylorTest}%
\end{figure}

The results show that the mesh-free representation is consistent and competitive with classical approaches and generally performs similarly as the corresponding boundary formulation.
We wish to stress that our benchmark implementation is limited in the sense that $\D\Phi_{\text{free}}$ is evaluated mesh-free on the (exact) unit sphere, whereas the representations $\D\Phi_{\text{vol}}, \D\Phi_{\text{bdry}}$ and $\D\Phi_{\text{FK}}$ as well as the shape functional differences $\Phi\big(\Omega_\varepsilon^\dir\big)-\Phi(\Omega)$ are evaluated using finite elements and, in particular, a mesh discretization of the unit sphere. This leads to a systematic numerical error that becomes increasingly significant for smaller distortion factors ($\varepsilon\to 0$).

Further analysis of possible improvements in the implementation of the probabilistic representation of the shape derivative, as well as possible applications in the context of a stochastic gradient scheme for shape optimization, are left for future research.
%---------------------------------------------------
% References
%---------------------------------------------------
\bibliographystyle{plain}
\bibliography{references}

%---------------------------------------------------
% Appendix
%---------------------------------------------------
\appendix
%---------------------------------------------------
% Appendix: Stochastic Gronwall for Random Times
%---------------------------------------------------
%\newpage
\section{Stochastic Gronwall Inequality with Stopping Times}\label{section:StochasticGronwall}
This Appendix provides a stochastic Gronwall inequality for random time horizons, i.e.\ up to a stopping time. The result can be obtained as a special case of \cite[Theorem 1]{korea_gronwall}; since for the applications in this paper slightly stronger integrability conditions hold, we can give a more direct and shorter proof. For deterministic time horizons, stochastic Gronwall inequalities on general (not necessarily Brownian) filtered probability spaces can be found in \cite[Corollary B1]{Duffie_Stochastic_Differential_Utility} and \cite[Theorem 1.8]{Antonelli_Stability_of_BSDEs}.

\begin{lemma}[Stochastic Gronwall Inequality for Stopping Times]\label{lemma:StochasticGronwallRandomTimes}
    Let $p>1$.
    Suppose that $\alpha$ and $Y$ are non-negative progressively measurable processes with
    \begin{align*}
        \E\bigg[\Big(\int_0^\tau |\alpha_s Y_s|^p \de s\Big)^2 \bigg] < \infty \qquad\text{and}\qquad \E\bigg[\exp\Big(\frac{2p}{2p-1}\int_0^\tau \alpha_s\de s\Big)\bigg] < \infty,
    \end{align*}
    and that $\tau$ is a stopping time satisfying $\E[\exp(\rho\tau)]<\infty$ for some $\rho>0$.\footnote{The proof shows that it would be sufficient to require $\E[\tau^{2(p-1)}]<\infty$.}
    If $\xi\in L^p (\F_\tau)$ and
    \begin{equation*}
        Y_t \leq \E \Big[\xi + \int_{t}^{\tau} \alpha_s Y_s \de s \condExpectation{\F_t}\Big]\quad\text{on }\{t\leq\tau\}
    \end{equation*}
    then
    \begin{align*}
        Y_t \leq \E\Big[\exp\Big(\int_t^\tau \alpha_s \de s\Big)\xi \Big| \F_t\Big]\quad\text{on }\{t\leq\tau\}.\closeEqn
    \end{align*}
\end{lemma}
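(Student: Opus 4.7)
The strategy is to iterate the defining inequality in the spirit of Picard--Gr\"{o}nwall. Writing $A_t \defined \int_0^t \alpha_s \, ds$, substituting the hypothesis into itself $n$ times via the tower property and collapsing the resulting nested iterated integrals via
\[
    \int\cdots\int_{t\leq s_1\leq\cdots\leq s_k\leq\tau}\alpha_{s_1}\cdots\alpha_{s_k}\,ds_1\cdots ds_k = \frac{(A_\tau - A_t)^k}{k!}
\]
yields, for every $n\geq 1$ and on $\{t\leq\tau\}$,
\[
    Y_t \leq \E\Big[\xi \sum_{k=0}^{n-1}\tfrac{(A_\tau - A_t)^k}{k!}\,\Big|\,\F_t\Big] + \E\big[R_n \,\big|\, \F_t\big],\qquad R_n \defined \tfrac{1}{(n-1)!}\int_t^\tau (A_s - A_t)^{n-1}\alpha_s Y_s\,ds.
\]
Since $Y\geq 0$ and the inequality is monotone in $\xi$, I may replace $\xi$ by $\xi^+$ throughout; monotone convergence then identifies the limit of the first term with $\E[\xi^+ e^{A_\tau - A_t}\mid\F_t]$, so passing $n\to\infty$ and undoing the replacement gives the claim provided $\E[R_n\mid\F_t]\to 0$ in an appropriate sense (e.g.\ $L^1$).

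The core technical task is therefore the vanishing of the remainder. Bounding $(A_s - A_t)^{n-1}\leq (A_\tau - A_t)^{n-1}$ pointwise gives $R_n \leq K_n J$ with $K_n \defined (A_\tau - A_t)^{n-1}/(n-1)!$ and $J \defined \int_t^\tau \alpha_s Y_s\,ds$. H\"{o}lder's inequality inside the integral with exponents $p$ and $q=p/(p-1)$ yields $J\leq(\int_0^\tau (\alpha_s Y_s)^p\,ds)^{1/p}\,\tau^{1/q}$, so the integrability assumption on $\alpha Y$ and the moment assumption on $\tau$ combine to give $J\in L^r$ for a suitable $r>1$. On the other hand, the exponential moment $\E[e^{\lambda A_\tau}]<\infty$ with $\lambda=\tfrac{2p}{2p-1}$ controls all polynomial moments via the standard bound $\E[A_\tau^m]\leq \Gamma(m+1)\lambda^{-m}\E[e^{\lambda A_\tau}]$; via Stirling this translates into an estimate $\|K_n\|_{r'}\leq C\lambda^{-(n-1)}\cdot\text{poly}(n)$ for the H\"{o}lder-conjugate exponent $r'$. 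An application of H\"{o}lder's inequality gives $\E[R_n]\leq \|K_n\|_{r'}\|J\|_r$, and the specific hypothesis $\lambda=\tfrac{2p}{2p-1}$ is calibrated precisely so that this product tends to zero.

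The main obstacle is exactly this remainder estimate. The polynomial moments $\E[A_\tau^{n-1}]$ grow essentially like $(n-1)!\lambda^{-(n-1)}$, which is of the same order as the factorial normalisation $(n-1)!$ in $K_n$; a crude Cauchy--Schwarz split would demand an exponential moment of $A_\tau$ with exponent strictly greater than $2$, which the hypothesis does not provide. The careful H\"{o}lder decomposition above, matching the $L^p$-integrability of $\alpha Y$ with the exponent $\lambda=\tfrac{2p}{2p-1}$ of the exponential moment, is exactly what closes the gap and forces $\E[R_n]\to 0$. With this in place, passing to the limit $n\to\infty$ in the iterated inequality completes the proof; the footnote's claim that the weaker condition $\E[\tau^{2(p-1)}]<\infty$ suffices follows from the fact that only a finite polynomial moment of $\tau$ enters the bound on $\|J\|_r$.
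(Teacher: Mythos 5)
Your approach is genuinely different from the paper's. The paper does not iterate the inequality: it applies the martingale representation theorem to $\eta = \xi + \int_0^\tau \alpha_s Y_s\,ds$, shows the auxiliary process $B_t = \E[\xi + \int_{t\wedge\tau}^\tau \alpha_s Y_s\,ds\,|\,\F_t]$ is a semimartingale, and then applies It\^o's product rule to $\beta_t B_t$ with $\beta_t = \exp(\int_0^{t\wedge\tau}\alpha_s\,ds)$. This way the exponential weight enters the estimate exactly once, so only a single (low-order) exponential moment $\E[\beta_\tau^q]$ is ever needed.

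Your remainder estimate, however, contains a genuine gap. The key claim is that Stirling gives $\|K_n\|_{r'} \leq C\,\lambda^{-(n-1)}\,\mathrm{poly}(n)$ with $K_n = (A_\tau - A_t)^{n-1}/(n-1)!$. That is off by an exponential factor. Writing $N = n-1$ and using $\E[A_\tau^{N r'}] \leq \Gamma(N r' + 1)\,\lambda^{-N r'}\,\E[e^{\lambda A_\tau}]$, Stirling yields
\[
\|K_n\|_{r'} \;\leq\; \frac{\Gamma(N r'+1)^{1/r'}}{N!}\,\lambda^{-N}\,\E[e^{\lambda A_\tau}]^{1/r'}
\;\sim\; \mathrm{poly}(N)\,\Bigl(\tfrac{r'}{\lambda}\Bigr)^{N},
\]
because $\Gamma(Nr'+1)^{1/r'}/N! \sim \mathrm{poly}(N)\,(r')^N$, not $\mathrm{poly}(N)$. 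So the quantity you need to vanish instead behaves like $(r'/\lambda)^N$. With your pairing $J\in L^p$, $r' = p/(p-1)$, and the hypothesis gives $\lambda = 2p/(2p-1)$; since $p/(p-1) > 2p/(2p-1)$ for all $p>1$ (equivalently $2p-1 > 2p-2$), the ratio $r'/\lambda$ is strictly greater than $1$ and $\|K_n\|_{r'} \to \infty$. The calibration you describe is therefore not achieved. Optimizing the H\"older split does not save it either: by pushing the exponents you can get $J\in L^r$ for any $r < 2p$, driving $r'\to 2p/(2p-1)^+ = \lambda^+$, but the inequality $r'/\lambda > 1$ persists, so the geometric factor never drops below $1$. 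In short, the iteration scheme burns a fresh exponential weight at every step and thus demands an exponential moment of $A_\tau$ of order strictly larger than $p/(p-1)$ (or, after optimization, strictly larger than $2p/(2p-1)$), which the hypothesis does not provide; this is precisely the obstruction the paper's martingale-representation-plus-It\^o argument is designed to avoid. (A secondary issue: the replacement $\xi\mapsto\xi^+$ is not freely ``undone'' if $\xi$ changes sign, whereas the paper's argument handles signed $\xi$ directly.)
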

\begin{proof}
    Define
    \begin{align*}
        \eta \defined \xi + \int_0^\tau \alpha_sY_s \de s.
    \end{align*}
    Using the elementary inequality $|a+b|^p\leq 2^{p-1}(|a|^p+|b|^p)$ for $a,b\in\R$ and the Jensen and H\"older inequalities we obtain
    \begin{align*}
        \E \big[|\eta|^p\big] &\leq 2^{p-1} \E\Big[|\xi|^p + \tau^{p-1}\int_0^\tau {|\alpha_s Y_s|}^p\de s\Big]\\
        &\leq 2^{p-1}\E\big[|\xi|^p\big] + 2^{p-1} {\E\big[\tau^{2(p-1)}\big]}^{\frac{1}{2}} {\E\bigg[\Big(\int_0^\tau |\alpha_s Y_s|^p \de s\Big)^2 \bigg]}^{\frac{1}{2}} < \infty
    \end{align*}
    and conclude that $\eta \in L^p(\F_\tau)$. By the martingale representation theorem, see \cite[Corollary 2.44]{Pardoux2014}, there exists a progressively measurable process $Z$ such that 
    \begin{align*}
        \eta = \E[\eta] + \int_0^\tau Z_s\de W_s
    \end{align*}
    with $Z_t=0$ on $\{t\geq\tau\}$ and
    \begin{align}\label{eqn:ProofStochasticGronwallNormOfRepresentant}
        \E\bigg[\Big(\int_0^\infty Z_s^2\de s\Big)^{\frac{p}{2}}\bigg] < \infty.
    \end{align}
    Next, we introduce the process $\beta\defined\exp(\int_0^{\cdot\wedge\tau} \alpha_s\de s)$ and observe that
    \begin{align*}
        \E\Big[\int_0^t\big|\beta_sZ_s\big|^2\de s\Big] \leq \E\Big[\beta_\tau\int_0^\infty\big|Z_s\big|^2\de s\Big]\leq \E\big[\beta_\tau^q\big]^\frac{1}{q} \E\bigg[\Big(\int_0^\infty\big|Z_s\big|^2\de s\Big)^\frac{p}{2} \bigg]^\frac{2}{p} <\infty
    \end{align*}
    for any $t\geq 0$, where $q \defined 2p(2p-1)^{-1}$.
    In particular
    \begin{align*}
        \E\Big[\int_0^t\big|Z_s\big|^2\de s\Big] \leq \E\bigg[\Big(\int_0^\infty\big|Z_s\big|^2\de s\Big)^\frac{p}{2} \bigg]^\frac{2}{p} <\infty
    \end{align*}
    so both
    \begin{align}\label{eqn:ProofStochasticGronwallUIMartingales}
        \bigg(\int_0^t \beta_sZ_s \de W_s\bigg)_{t\geq0} \qquad\text{and}\qquad \bigg(\int_0^t Z_s \de W_s\bigg)_{t\geq0}
    \end{align}
    are uniformly integrable martingales. 
    Defining the auxiliary process
    \begin{align*}
        B_t \defined \E\Big[\xi +\int_{t\wedge\tau}^\tau \alpha_sY_s\de s \Big| \F_t\Big],\quad t\geq 0
    \end{align*}
    we have $B_\tau = \xi$ and, since by definition $\xi=\eta-\int_0^\tau \alpha_s Y_s\de s$,
    \begin{align*}
        B_t = \E\Big[\eta - \int_0^{t\wedge\tau}\alpha_sY_s\de s\condExpectation{\F_t}\Big] = \E[\eta] - \int_0^{t\wedge\tau}\alpha_sY_s\de s + \int_0^{t\wedge\tau} Z_s\de W_s.
    \end{align*}
    Here we use the martingale representation of $\eta$ and \eqref{eqn:ProofStochasticGronwallUIMartingales}. Thus $B$ is a semimartingale and the It\=o product formula yields
    \begin{align*}
        \beta_\tau B_\tau - \beta_t B_t = \int_t^\tau \beta_s \big(-\alpha_sY_s\big) \de s + \int_t^\tau \beta_s Z_s\de W_s + \int_t^\tau B_s \beta_s \alpha_s \de s.
    \end{align*}
    Rearranging and using \eqref{eqn:ProofStochasticGronwallUIMartingales} we obtain
    \begin{align*}
        \beta_t B_t = \E\Big[\beta_\tau B_\tau + \int_t^\tau \beta_s\alpha_s \big(Y_s-B_s\big)\de s\Big| \F_t \Big] \leq \E\big[\beta_\tau \xi\big| \F_t\big]
    \end{align*}
    since $Y_s\leq B_s$ on $\{s\leq\tau\}$ and $B_\tau = \xi$. It follows that
    \begin{align*}
        Y_t \leq B_t \leq \E\Big[\frac{\beta_\tau}{\beta_t} \xi\Big|\F_t\Big]
    \end{align*}
    and the proof is complete. 
\end{proof}

%---------------------------------------------------
% Appendix: Supplements
%---------------------------------------------------
%\newpage
\section{Supplements}\label{section:Supplements}
\begin{definition}[Hölder Space]\label{Definition Hoelder Space}
	Let $k\in\nat_0$ and $\gamma\in(0,1)$. The Hölder space $\HoelderSpace{k}{\gamma}(\Omega)$ is
	\begin{align}
		\HoelderSpace{k}{\gamma}(\Omega)\defined\big\{f\in\C^k(\Omega)\,\big|\,\|f\|_{\HoelderSpace{k}{\gamma} (\Omega)}<\infty\big\},
	\end{align}
	where
	\[
	\|f\|_{\HoelderSpace{k}{\gamma}(\Omega)}\defined\sum_{|\beta|\leq k}{\|D^\beta f\|}_\infty +\sum_{|\beta|=k}\langle D^\beta f\rangle_\gamma
	\]
	and
	\[
	\langle h \rangle_\gamma \defined \sup_{x,y\in\Omega,\, x\neq y} \frac{|h(x)-h(y)|}{\|x-y\|^\gamma}. \closeEqn
	\]
\end{definition}

\begin{lemma}\label{lemma:UniformConvergenceDirectionalDerivative}
	Let $A\subseteq \mathcal{O}\subseteq\R^d$ where $\mathcal{O}$ is a bounded domain and $A$ is closed. If $f\colon \mathcal{O}\rightarrow\R$ is of class $\C^2 (\mathcal{O})$ and $\Diff^2 f$ is uniformly bounded, then 
	\begin{equation*}
		\lim_{\varepsilon\to 0}\sup \bigg\{ \Big|\frac{1}{\varepsilon} \Big(f \big(x_\varepsilon+\varepsilon\dir (x_\varepsilon)\big) - f(x_\varepsilon)\Big) - \Diff f(x)\dir (x)\Big|\,\bigg|\, x\in A, (x_\varepsilon)\subseteq \mathcal{O} \text{ with } x_\varepsilon\to x \bigg\} = 0
	\end{equation*}
	for any $\dir \in\pertSpace$. \close
\end{lemma}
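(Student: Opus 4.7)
My strategy is to reduce the statement to a second-order Taylor expansion combined with uniform continuity of $y \mapsto Df(y) V(y)$ on a suitable compact set. The key point is that everything happens on a compact tubular neighborhood of $A$, on which $Df$, $V$, and $D^2 f$ are all controlled.

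First I would exploit the topological setup. Since $A$ is closed and contained in the bounded open set $\mathcal{O}$, the set $A$ is compact and $\eta \defined d(A, \partial\mathcal{O}) > 0$. I choose the closed tubular neighborhood $K \defined \{y \in \R^d : d(y, A) \leq \eta/2\}$, which is a compact subset of $\mathcal{O}$. For $\varepsilon$ small enough that $\varepsilon \|V\|_\infty < \eta/4$ and for sequences with $\|x_\varepsilon - x\| < \eta/4$, both $x_\varepsilon$ and $x_\varepsilon + \varepsilon V(x_\varepsilon)$, and the entire segment between them, lie in $K$.

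Next, I would apply Taylor's theorem to second order with integral remainder along this segment. Since $D^2 f$ is uniformly bounded on $K$ and $V$ is bounded, this yields
\begin{equation*}
    \Big|\tfrac{1}{\varepsilon}\big(f(x_\varepsilon + \varepsilon V(x_\varepsilon)) - f(x_\varepsilon)\big) - Df(x_\varepsilon) V(x_\varepsilon)\Big| \leq \tfrac{1}{2}\|V\|_\infty^2 \, \sup_{y \in K} \|D^2 f(y)\| \cdot \varepsilon.
\end{equation*}
A triangle inequality then gives the master estimate
\begin{equation*}
    \Big|\tfrac{1}{\varepsilon}\big(f(x_\varepsilon + \varepsilon V(x_\varepsilon)) - f(x_\varepsilon)\big) - Df(x) V(x)\Big| \leq |h(x_\varepsilon) - h(x)| + C\varepsilon,
\end{equation*}
where $h \defined Df \cdot V$ and $C$ depends only on $\|V\|_\infty$ and $\|D^2 f\|_{L^\infty(K)}$.

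Finally, since $f \in \mathcal{C}^2(\mathcal{O})$ and $V \in \mathcal{C}^2(\R^d)$, the map $h$ is continuous on $K$; by compactness, it is uniformly continuous, say with modulus $\omega_h$. The master estimate then reads
\begin{equation*}
    \Big|\tfrac{1}{\varepsilon}\big(f(x_\varepsilon + \varepsilon V(x_\varepsilon)) - f(x_\varepsilon)\big) - Df(x) V(x)\Big| \leq \omega_h(\|x_\varepsilon - x\|) + C\varepsilon,
\end{equation*}
and the right-hand side depends only on $\varepsilon$ and $\|x_\varepsilon - x\|$ (not on the specific $x$ or sequence). Taking the supremum over all admissible pairs $(x, (x_\varepsilon))$ with $x \in A$ and $x_\varepsilon \to x$, both terms vanish as $\varepsilon \to 0$, since along any such family both $\varepsilon$ and $\sup \|x_\varepsilon - x\|$ go to zero. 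The main subtlety — and the only potential obstacle — is unpacking the sup-over-sequences convention: one has to verify that the uniformity claim reduces to uniformity in $\|x_\varepsilon - x\|$ and $\varepsilon$ individually, which is precisely what the master estimate delivers.
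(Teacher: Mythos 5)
Your proof is correct and takes essentially the same approach as the paper's: a second-order Taylor expansion to peel off an $O(\varepsilon)$ remainder, followed by uniform continuity of $h = Df\cdot V$ to absorb the discrepancy between $x_\varepsilon$ and $x$. The paper invokes Lagrange's form of the remainder and asserts Lipschitz continuity of $Df\cdot V$ on $\mathcal{O}$ from boundedness of $D^2f$ and of $\mathcal{O}$; your version with a compact tubular neighborhood $K$ and a modulus of continuity $\omega_h$ is slightly more careful about why the segment $[x_\varepsilon, x_\varepsilon+\varepsilon V(x_\varepsilon)]$ stays inside the domain and avoids the (implicit, harmless but technically imprecise) convexity assumption behind global Lipschitzness on $\mathcal{O}$.
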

\begin{proof}
	Let $\dir^*\defined \sup_{x\in A}\|\dir(x)\|$.
	For each $x\in A$ there is $\varepsilon_0>0$ such that the open ball with radius $\varepsilon_0 \dir^*$ is contained in $\mathcal{O}$. By Taylor expansion
	\begin{align*}
		\frac{1}{\varepsilon} f \big(x_\varepsilon+\varepsilon\dir (x_\varepsilon)\big) - f(x_\varepsilon) = \Diff f (x_\varepsilon)\dir (x_\varepsilon) + \varepsilon \Big\langle\dir (x_\varepsilon), \Diff^2 f\big(x_\varepsilon+\alpha\varepsilon\dir (x_\varepsilon) \big)\dir(x_\varepsilon)\Big\rangle,
	\end{align*}
	for some $\alpha\in[0,1]$. 
	Since $\dir$ is of class $\C^2(\R^d)$ and $\Diff^2 f$ is bounded by assumption we have
	\begin{align*}
		\bigg|\varepsilon \Big\langle\dir (x_\varepsilon), \Diff^2 f\big(x_\varepsilon+\alpha\varepsilon\dir (x_\varepsilon) \big)\dir(x_\varepsilon)\Big\rangle\bigg| \leq \varepsilon \big\|\dir(x_\varepsilon)\big\|^2 \Big\|\Diff^2 f\big(x_\varepsilon+\varepsilon\dir (x_\varepsilon)\big)\Big\| \leq C \varepsilon.
	\end{align*}
	Moreover, $\Diff f$ is Lipschitz on $\mathcal{O}$ since $\Diff^2 f$ is bounded; since $\mathcal{O}$ is bounded, so is $\Diff f$ and consequently $\Diff f \cdot \dir$ is Lipschitz on $\mathcal{O}$; hence
	\begin{align*}
		&\lim_{\varepsilon\to 0}\sup\Big\{\big|\Diff f(x_\varepsilon)\dir(x_\varepsilon) - \Diff f(x)\dir (x)\big|\;\Big|\, x\in A, (x_\varepsilon)\subseteq O\text{ with } x_\varepsilon \to x \Big\} = 0. \qedhere
	\end{align*}
\end{proof}

\begin{lemma}\label{lemma:BoundaryPreservation}
	Let $A\subseteq\mathcal{O}\subseteq\R^d$ where $\mathcal{O}$ is open and $A$ is closed.
	If $T\colon \mathcal{O}\rightarrow\R^d$ is bi-continuous, i.e.\ continuous with continuous inverse, then 
	\[
	T(\partial A)=\partial T(A).\closeEqn
	\]
\end{lemma}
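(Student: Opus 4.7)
The plan is to reduce the identity for boundaries to the corresponding identities for interior and closure, exploiting that a bi-continuous map is a homeomorphism onto its image and therefore preserves all topological operations. Throughout, recall $\partial A = \overline{A}\setminus\operatorname{int}(A)$.

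First, I would record that bi-continuity of $T$ makes it a homeomorphism from $\mathcal{O}$ onto $T(\mathcal{O})$; in particular $T$ is an open map and $T^{-1}$ is continuous. Since $A$ is closed in $\R^d$ and contained in the open set $\mathcal{O}$, we have $\overline{A}=A\subseteq\mathcal{O}$, so interior and closure of $A$ may be taken indifferently in $\R^d$ or in $\mathcal{O}$; the same remark will apply to $T(A)\subseteq T(\mathcal{O})$ once we know $T$ preserves closedness of $A$, which follows from the continuity of $T^{-1}$.

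Next I would establish the two key commutation identities. For the interior: $T(\operatorname{int}(A))$ is open (image of an open set under the open map $T$) and contained in $T(A)$, which gives $T(\operatorname{int}(A))\subseteq\operatorname{int}(T(A))$. For the reverse inclusion, pick $y\in\operatorname{int}(T(A))$ with open neighbourhood $V\subseteq T(A)$; then $T^{-1}(V)$ is an open neighbourhood of $T^{-1}(y)$, and by the bijectivity of $T\colon\mathcal{O}\to T(\mathcal{O})$ it is contained in $A$, hence $T^{-1}(y)\in\operatorname{int}(A)$ and $y\in T(\operatorname{int}(A))$. An entirely analogous argument, using that $T$ sends closed subsets of $\mathcal{O}$ to closed subsets of $T(\mathcal{O})$ (via continuity of $T^{-1}$) and that preimages under $T$ commute with closure, yields $T(\overline{A})=\overline{T(A)}$.

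Finally I would combine the two identities. Because $T$ is a bijection between $\mathcal{O}$ and $T(\mathcal{O})$, it distributes over set difference, so
\[
    T(\partial A) \;=\; T\bigl(\overline{A}\setminus\operatorname{int}(A)\bigr) \;=\; T(\overline{A})\setminus T(\operatorname{int}(A)) \;=\; \overline{T(A)}\setminus\operatorname{int}(T(A)) \;=\; \partial T(A),
\]
which is the claim.

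The only subtle point is the interpretation of the boundary symbol: a priori $\partial A$ and $\partial T(A)$ could be read in the subspace topologies of $\mathcal{O}$ and $T(\mathcal{O})$, but since $A$ is closed in $\R^d$ and $A\subseteq\mathcal{O}$ with $\mathcal{O}$ open we have $\partial A\subseteq A\subseteq\mathcal{O}$, and the corresponding containment for $T(A)$ follows from closedness of $T(A)$ in $T(\mathcal{O})$ together with $T(A)\subseteq T(\mathcal{O})$ being open in $\R^d$; this reconciliation between relative and ambient boundaries is the main (minor) obstacle and should be spelled out explicitly before invoking the displayed computation.
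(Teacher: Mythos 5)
Your argument takes a genuinely different route from the paper's. The paper establishes $T(\partial A)\subseteq\partial T(A)$ directly: for $y=T(x)$ with $x\in\partial A$ and an arbitrary open neighborhood $U_y$, the preimage $T^{-1}(U_y)$ is an open neighborhood of $x$ that meets both $A$ and its complement, and injectivity transfers this to $U_y$ meeting both $T(A)$ and its complement; the reverse inclusion is then obtained by applying the same argument to $T^{-1}$. You instead decompose $\partial A=\overline{A}\setminus\operatorname{int}(A)$, show that $T$ commutes separately with interior and with closure, and combine via the set-difference identity for the bijection $T|_{\mathcal{O}}$. That is a clean alternative which isolates the two relevant topological facts, at the cost of invoking openness of $T$ (i.e.\ invariance of domain, which the paper also uses implicitly when it treats $T^{-1}\colon T(\mathcal{O})\to\mathcal{O}$ as bi-continuous on an open subset of $\R^d$).

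One caveat, which your proof shares with the paper's: the identity $T(\overline{A})=\overline{T(A)}$ asserts (since $A$ is already closed, so $T(\overline{A})=T(A)$) that $T(A)$ is closed in $\R^d$, not merely in $T(\mathcal{O})$. Your closing remark tries to justify this, but the reasoning is not correct: relative closedness of $T(A)$ in the open set $T(\mathcal{O})$ does not give $\partial T(A)\subseteq T(\mathcal{O})$. Take $\mathcal{O}=A=\R$ and $T=\arctan$; then $T(A)$ equals the open interval $T(\mathcal{O})=(-\tfrac{\pi}{2},\tfrac{\pi}{2})$, so it is relatively closed, yet its ambient boundary $\{\pm\tfrac{\pi}{2}\}$ lies outside $T(\mathcal{O})$, and indeed the lemma's conclusion fails here ($T(\partial A)=\emptyset\ne\partial T(A)$). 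This reveals a hidden hypothesis in the lemma, and the paper's own second step (feeding $\partial T(A)$ through $T^{-1}$ and then concluding $\partial T(A)\subseteq T(\partial A)$) relies on the very same inclusion $\partial T(A)\subseteq T(\mathcal{O})$. In the paper's application $T$ is a global $\C^2$-diffeomorphism of $\R^d$, so $T(\mathcal{O})=\R^d$ and the issue vanishes. Your proof is therefore on the same footing as the paper's; just note that the reconciliation you proposed at the end does not hold at the stated level of generality.
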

\begin{proof}
	First, we prove $T(\partial A)\subseteq\partial T(A)$. For this, let $x\in\partial A$ and set $y\defined T(x) \in T(\partial A)$. Let $U_y\subseteq\R^d$ be some arbitrary open neighborhood of $y$. Then by continuity of $T$, the set $T^{-1}(U_y)$ is open and $x\in T^{-1}(U_y)$. Since $x\in\partial A$, we have $T^{-1}(U_y)\cap (\R^d\setminus A)\neq \emptyset$, as well as $T^{-1}(U_y)\cap A\neq\emptyset$, where we made explicit use of the openness of $T^{-1}(U_y)$. Thus, for every open neighborhood $U_y$ of $y$ there is at least one point contained in $T(A)$ and one in $T(\R^d\setminus A) = T(\R^d)\setminus T( A)$. Hence $y\in\partial T(A)$.\\
	On the other hand, by continuity of $T^{-1}\colon T(\mathcal{O})\rightarrow \mathcal{O}$ and the first part, we obtain 
	\[
	T^{-1}(\partial(T(A))) \subseteq \partial T^{-1}(T(A))=\partial A,
	\]
	and hence, $\partial(T(A))\subseteq T(\partial A)$.
\end{proof}

\begin{lemma}[Conditional Wald Lemma for Brownian Motion]\label{lemma:WaldLemmaForBrownianMotion}
	Let $H$ be a bounded progressively measurable process.
	If $\tau$ is a stopping time with $\E[\tau]<\infty$ then for every $t\geq 0$
	\[
	\E_t\Big[\int_{t\wedge\tau}^\tau H_s\de W_s \Big] = 0. \closeEqn
	\]
\end{lemma}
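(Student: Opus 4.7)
The plan is to reduce the conditional statement to the optional stopping theorem for a uniformly integrable martingale, after verifying the required integrability using the boundedness of $H$ and the Burkholder--Davis--Gundy inequality.

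First, I would define the continuous local martingale $M_s \defined \int_0^s H_r \de W_r$, which is well-defined since $H$ is progressively measurable and bounded. Let $C > 0$ be a uniform bound for $\|H\|$, so that the quadratic variation satisfies $\langle M\rangle_s \leq C^2 s$. The stopped process $M^\tau = (M_{s\wedge\tau})_{s\geq 0}$ is then a continuous local martingale with $\langle M^\tau\rangle_\infty \leq C^2 \tau$.

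Next, I would establish that $M^\tau$ is a uniformly integrable martingale. By the Burkholder--Davis--Gundy inequality and Jensen's inequality,
\begin{align*}
    \E\Big[\sup_{s\geq 0} |M_{s\wedge\tau}|\Big] \leq K\, \E\Big[\sqrt{\langle M^\tau\rangle_\infty}\Big] \leq K C\, \E\big[\sqrt{\tau}\,\big] \leq K C\, \sqrt{\E[\tau]} < \infty,
\end{align*}
where $K$ is the universal BDG constant. This uniform integrability of $\sup_s |M_{s\wedge\tau}|$ upgrades $M^\tau$ from a local martingale to a genuine uniformly integrable martingale, so the terminal value $M^\tau_\infty = M_\tau$ is well-defined and integrable.

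Finally, I would apply the optional stopping theorem to the uniformly integrable martingale $M^\tau$ at the deterministic time $t$, which yields $\E_t[M_\tau] = M_{t\wedge\tau}$. Subtracting, it follows that
\begin{equation*}
    \E_t\Big[\int_{t\wedge\tau}^\tau H_s\de W_s\Big] = \E_t\big[M_\tau - M_{t\wedge\tau}\big] = M_{t\wedge\tau} - M_{t\wedge\tau} = 0,
\end{equation*}
as claimed. The only nontrivial step is the uniform integrability bound, but BDG together with $\E[\tau]<\infty$ handles this directly; no further obstacles arise.
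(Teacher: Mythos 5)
Your proof is correct and has the same overall structure as the paper's: define the stopped stochastic integral, show it is a uniformly integrable martingale, and conclude by optional stopping. The single point of difference is the tool used to upgrade the local martingale to a UI martingale. The paper notes that $\E[\langle M\rangle_\infty]\leq C^2\E[\tau]<\infty$, so $M^\tau$ is $L^2$-bounded (hence a UI martingale) by the standard square-integrability criterion; you instead invoke the Burkholder--Davis--Gundy inequality (with $p=1$) together with Jensen to get $\E[\sup_s |M_{s\wedge\tau}|]\leq KC\sqrt{\E[\tau]}<\infty$, and conclude UI from domination by an integrable supremum. Both arguments are standard and correct; the paper's $L^2$ route is marginally shorter, while your BDG route is slightly more economical on hypotheses (it would already go through under the weaker condition $\E[\sqrt{\tau}]<\infty$), although that extra generality is not needed here.
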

\begin{proof}
	The local martingale $M$, $M_t\defined\int_0^{t\wedge\tau}H_s\de W_s$ satisfies $\E[\langle M\rangle_\infty]=\E[\int_0^\tau H_s^2\de s]\leq C\E[\tau]<\infty$, hence is a uniformly integrable martingale.
	Thus by optional stopping
	\begin{equation*}
		0 = \E_t[M_\tau-M_t] = \E_t\Big[ \int_{t\wedge\tau}^\tau H_s\de W_s \Big].\qedhere
	\end{equation*}
\end{proof}
%
%
%	UUU	UPDATE
\MW{
	\begin{lemma}\label{lemma:elementaryBoundEXP}
		For $k\in\nat$ and $\alpha > 2$ it holds that $x^k e^{2x} \leq \tfrac{k^k}{e^k(\alpha-2)^k} e^{\alpha x}$ for all $x\geq 0$. \close 
	\end{lemma}
	\begin{proof}
		The continuous map $h\colon [0,\infty)\to [0,\infty);\ x\mapsto x^k \exp((2-\alpha)x)$ has a global maximum at $x^* \defined \tfrac{k}{\alpha-2}$.
		For this notice $h^\prime (x) = (kx^{k-1}+(2-\alpha)x^k)\exp((2-\alpha)x)$ vanishes only at $x^*$, and moreover $h(0)=0$ and $h(x)\to 0$ as $x\to\infty$. Now the claim follows from
		\begin{align}
			x^k \exp(2x) & = h(x) \exp(\alpha x) \leq h(x^*) \exp(\alpha x) \quad\text{for all } x\geq 0.  \qedhere
		\end{align}
	\end{proof}
}
%---------------------------------------------------
\subsection*{Proof of Proposition \ref{Prop:ShapeDerivativeSplitt}}\label{section:proofTheoremShapeFunctional}
\begin{lemma}\label{lemma:IntegralConvergenceForDerivativeSplitt}
	Let $\dir\in\pertSpace$. Then
	\begin{equation*}
		\sup_{\varepsilon\in [-\varepsilon_0,\varepsilon_0]}\, \sup_{x\in\overline{\pertDomain}}\, \big|u_\varepsilon^\dir (x)\big| < \infty.
	\end{equation*}
	Moreover, for the extension $\uExt\colon\HoldAll\rightarrow\R$, we have
	\begin{align*}
		\frac{1}{\varepsilon} \int_{\pertDomain} \big|u_\varepsilon^\dir(x) - \uExt(x)\big|\, \1_{\pertDomain\setminus\Omega}(x)\, \dLebesgue{d}{x} \rightarrow 0\quad \text{as }\varepsilon\to 0. \closeEqn
	\end{align*}
\end{lemma}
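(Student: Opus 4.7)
\textbf{Proof plan for Lemma~\ref{lemma:IntegralConvergenceForDerivativeSplitt}.}

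\emph{Part 1 (Uniform sup-bound).} The plan is to construct an explicit super/sub-solution that dominates the family $\{u_\varepsilon^\dir\}_{\varepsilon}$ on all perturbed domains simultaneously. Since $\HoldAll$ is bounded and $\mu,\sigma$ are continuous with $\sigma\sigma^\top$ uniformly elliptic on $\HoldAll$, a standard barrier construction yields $\psi\in\C^2(\overline\HoldAll)$ with $\psi\geq 0$ and $\InfGen[\psi]\leq -1$; e.g.\ $\psi(x)=A-Be^{\lambda x_1}$ for appropriate $A,B,\lambda$. Setting $w_+(x)\defined \|g\|_\infty + C\psi(x)$ with $C$ the constant from \hyperref[assumption:LadyzhenskayaPardoux]{\emph{(PDE)}}, the sign condition $\sign(u)f(x,u)\leq C$ gives $f(x,w_+)\leq C$ wherever $w_+\geq 0$, so $\SemiDiffOp[w_+]\leq -C+f(\cdot,w_+)\leq 0$ on $\pertDomain$; also $w_+\geq g$ on $\partial\pertDomain$. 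The comparison principle for semilinear elliptic PDEs (applicable since $\partial_u f\leq 0$) then yields $u_\varepsilon^\dir\leq w_+$ on $\overline{\pertDomain}$, and $-w_+$ bounds $u_\varepsilon^\dir$ from below analogously. Since $\|w_+\|_{L^\infty(\HoldAll)}$ depends only on $\|g\|_\infty$, $C$ and $\HoldAll$, this gives the asserted uniform bound.

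\emph{Part 2 (Integral vanishing).} My plan is to combine two $\varepsilon$-small estimates: that the strip $\pertDomain\setminus\Omega$ has Lebesgue measure $O(\varepsilon)$, and that the integrand $|u_\varepsilon^\dir-\uExt|$ is $o(1)$ there, uniformly in $x$. For the measure bound, any $x\in\pertDomain\setminus\Omega$ satisfies $T_\varepsilon^\dir(x)\in\Omega$ and $x\notin\Omega$, so the segment $[x, x+\varepsilon\dir(x)]$ crosses $\partial\Omega$, giving $\dist(x,\partial\Omega)\leq\varepsilon\|\dir\|_\infty$; since $\partial\Omega$ is of class $\HoelderSpace{2}{\gamma}$, a tubular neighborhood computation yields
\[
    \bigl|\pertDomain\setminus\Omega\bigr|\leq C\varepsilon.
\]

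For the pointwise estimate, I would split
\[
    \bigl|u_\varepsilon^\dir(x)-\uExt(x)\bigr|\leq \bigl|u_\varepsilon^\dir(x)-g(x)\bigr|+\bigl|g(x)-\uExt(x)\bigr|.
\]
The second term is bounded by $L\,\dist(x,\partial\Omega)\leq L\varepsilon\|\dir\|_\infty$ because $g-\uExt$ vanishes on $\partial\Omega$ and is Lipschitz on $\overline{\HoldAll}$ (both $g$ and $\uExt$ are of class $\C^2$ there). For the first term, I use the Feynman-Kac representation on $\pertDomain$,
\[
    u_\varepsilon^\dir(x)-g(x)=\E\bigl[g(X_{\pertExit}^x)-g(x)\bigr]+\E\Bigl[\int_0^{\pertExit} f\bigl(X_s^x,u_\varepsilon^\dir(X_s^x)\bigr)\de s\Bigr],
\]
and observe that the estimate from the proof of Proposition~\ref{PROP:BoundaryRegularity} shows $\dist(x,\partial\pertDomain)\leq 2\varepsilon\|\dir\|_\infty$ for $x\in\pertDomain\setminus\Omega$. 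Since $\pertDomain$ satisfies an exterior sphere condition with radius uniformly bounded below in $\varepsilon$ (inherited from the uniform $\C^2$ parametrizations constructed in the proof of Proposition~\ref{PROP:BoundaryRegularity}) and $\sigma\sigma^\top$ is uniformly elliptic on $\HoldAll$, a classical barrier estimate (e.g.\ \cite[Theorem~6.13]{GilbargTrudinger} or an explicit stochastic barrier via exit times of $X^x$ from an exterior ball) gives $\E[\pertExit]\to 0$ uniformly for $x\in\pertDomain\setminus\Omega$ as $\varepsilon\to 0$, together with $\E[|X_{\pertExit}^x-x|]\to 0$. Using Lipschitz continuity of $g$, boundedness of $f(\cdot,u_\varepsilon^\dir(\cdot))$ on $\HoldAll$ (from Part~1), we conclude
\[
    \sup_{x\in\pertDomain\setminus\Omega}\bigl|u_\varepsilon^\dir(x)-g(x)\bigr|\xrightarrow[\varepsilon\to 0]{} 0.
\]
Combining, $\sup_{x\in\pertDomain\setminus\Omega}|u_\varepsilon^\dir(x)-\uExt(x)|=\omega(\varepsilon)$ with $\omega(\varepsilon)\to 0$, so the integral is bounded by $C\varepsilon\,\omega(\varepsilon)=o(\varepsilon)$, yielding the claim.

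\emph{Main obstacle.} The delicate step is establishing that the boundary modulus of continuity of $u_\varepsilon^\dir$ is uniform in $\varepsilon$. This requires the exterior sphere condition on $\pertDomain$ to hold with a radius bounded below independently of $\varepsilon$, which is not immediately obvious from Proposition~\ref{PROP:BoundaryRegularity} but follows because the $\C^2$ parametrizations $\psi^y=\psi^x\circ T_\varepsilon^\dir$ constructed there have $\HoelderSpace{2}{\gamma}$-norms uniformly controlled by those of the original charts and the fixed bound on $\Diff T_\varepsilon^\dir$. Equivalently, one can invoke a stochastic barrier directly for $X^x$ near $\partial\pertDomain$ using an exterior ball for $\pertDomain$ whose radius is bounded below uniformly in $\varepsilon$.
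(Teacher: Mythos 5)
Your proposal is correct in outline, but it takes a genuinely different route from the paper. The paper's proof is a one-liner: by Lemma~\ref{lemma:ProofStep3} (evaluated at $t=0$) together with the uniform exit-time bound \eqref{eqn:proofStrongConvergenceUniformExitBound}, one already has
\[
    \big|u_\varepsilon^\dir(x)-\uExt(x)\big|\leq C\varepsilon\,\E\big[(\pertExit+1)\exp(\pertExit)\big]\leq C\varepsilon
\]
uniformly in $x\in\pertDomain$ and $\varepsilon\in[-\varepsilon_0,\varepsilon_0]$. From this single estimate both claims drop out immediately: the sup-bound via the triangle inequality with $\|\uExt\|_\infty$, and the integral bound by $C\lambda^d[\pertDomain\setminus\Omega]\to 0$. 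In other words, the paper treats the lemma as a free byproduct of the stochastic Gronwall argument developed for Theorem~\ref{theorem:StronConvergenceShapeDerivative}.

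You instead avoid Lemma~\ref{lemma:ProofStep3} entirely. For the sup-bound you construct an explicit supersolution barrier $w_+=\|g\|_\infty+C\psi$ with $\InfGen[\psi]\leq-1$ and invoke the comparison principle (valid because $\Diff_u f\leq 0$); this is a clean, self-contained PDE argument that also explains the role of the sign condition in \hyperref[assumption:LadyzhenskayaPardoux]{\emph{(PDE)}}. For the integral claim you combine a quantitative thin-shell measure estimate $\lambda^d[\pertDomain\setminus\Omega]=O(\varepsilon)$ with a uniform modulus-of-continuity argument for $u_\varepsilon^\dir$ near $\partial\pertDomain$, obtained via Feynman--Kac and exit-time estimates for starting points at distance $O(\varepsilon)$ from the boundary. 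You correctly identify the crux: the exterior-sphere (or boundary-modulus) estimate must hold with constants uniform in $\varepsilon$, and your argument via the $\C^2$-norms of the charts $\psi^y=\psi^x\circ T_\varepsilon^\dir$ is sound, since these are controlled by the fixed bounds on $\Diff T_\varepsilon^\dir$ over $[-\varepsilon_0,\varepsilon_0]$. What each route buys: the paper's is far shorter and delivers the sharper pointwise rate $O(\varepsilon)$ everywhere, but is parasitic on the heavy stochastic-Gronwall machinery; yours is longer and yields only a qualitative $o(1)$ on the integrand, but is independent of Section~\ref{section:ProofProbabilisitcRepresentation} and arguably more transparent about which structural assumptions (uniform ellipticity, $\Diff_uf\leq 0$, the $\sign(u)f$ bound, $\C^{2,\gamma}$ boundary) are actually being used.

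Two small points to tidy up if you flesh this out. First, the constant in $\|w_+\|_\infty$ depends not only on $\|g\|_\infty$, $C$ and $\HoldAll$ but also on the ellipticity constant and $\|\mu\|_\infty$ through the barrier $\psi$; this is harmless since all are fixed, but should be stated. Second, be careful that the bound $\E[|X^x_{\pertExit}-x|]\to 0$ scales like $\sqrt{\E[\pertExit]}$ via Burkholder--Davis--Gundy, not like $\E[\pertExit]$ itself; your qualitative $o(1)$ claim still survives, but the quantitative rate from your argument is $O(\sqrt{\varepsilon})$ rather than the paper's $O(\varepsilon)$, which is the reason you genuinely need the $O(\varepsilon)$ measure bound for the shell, whereas the paper gets away with $\lambda^d[\pertDomain\setminus\Omega]\to 0$.
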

\begin{proof}
	Let $x\in\pertDomain$. By Lemma~\ref{lemma:ProofStep3} (for $t=0$) and using \eqref{eqn:proofStrongConvergenceUniformExitBound}, there is a constant $C>0$ (not depending on $x$ or $\varepsilon$) such that
	\begin{align*}
		\big|u_\varepsilon^\dir (x) - \uExt (x)\big| \leq C \varepsilon\, \E\big[ (\pertExit +1)\exp(\pertExit)\big] \leq C \varepsilon.
	\end{align*}
	Using this bound, we conclude that
	\begin{align*}
		\big|u_\varepsilon^\dir (x)\big| \leq \big|u_\varepsilon^\dir (x) - \uExt (x)\big| + \big|\uExt (x)\big| \leq C \varepsilon_0 + \sup_{y\in\overline{\pertDomain}}\big|\uExt (y)\big| < \infty
	\end{align*}
	and
	\begin{equation*}
		\frac{1}{\varepsilon} \int_{\pertDomain} \big|u_\varepsilon^\dir(x) - \uExt(x)\big|\, \1_{\pertDomain\setminus\Omega}(x)\, \dLebesgue{d}{x} \leq C \lambda^d \big[\pertDomain\setminus\Omega\big].\quad \qedhere
	\end{equation*}
\end{proof}

\begin{proof}[Proof of Proposition~\ref{Prop:ShapeDerivativeSplitt}]
	Throughout the proof $\dir\in\pertSpace$ is fixed. As in the proof of Theorem \ref{theorem:StronConvergenceShapeDerivative}, let $\Tilde{u}\colon\HoldAll\rightarrow\R$ denote a $\C^2$ extension of $u$, where $\HoldAll$ is defined in Remark \ref{remark:Epsilon0}. We have
	\begin{align*}
		\D\Phi[\dir] & = \lim_{\varepsilon\rightarrow 0} \frac{1}{\varepsilon} \Big( \int_{\pertDomain} \phi(\cdot, u_\varepsilon^\dir)\dLeb{d}-\int_\Omega\phi(\cdot, u)\dLeb{d} \Big)\\
		& = \lim_{\varepsilon\rightarrow 0} \frac{1}{\varepsilon} \int_{\pertDomain} \phi (\cdot, u_\varepsilon^\dir) - \phi(T_\varepsilon^\dir, u\circ T_\varepsilon^\dir)\big|\det (\Diff T_\varepsilon^\dir)\big| \dLeb{d} \\
		& =\lim_{\varepsilon\rightarrow 0} \int_{\pertDomain} \frac{1}{\varepsilon} \Bigl( \phi (\cdot, u_\varepsilon^\dir) - \phi (\cdot, \Tilde{u}) \Bigr) \dLeb{d}\\
		& \hspace*{2.0cm} + \int_{\pertDomain} \frac{1}{\varepsilon} \phi (\cdot, \Tilde{u})\Bigl( 1 - \big|\det (\Diff T_\varepsilon^\dir)\big| \Bigr) \dLeb{d}\\
		& \hspace*{3.0cm} + \int_{\pertDomain} \frac{1}{\varepsilon} \big(\phi(\cdot, \Tilde{u}) - \phi (\cdot ,u\circ T_\varepsilon^\dir) \big)\big|\det (\Diff T_\varepsilon^\dir)\big|\; \dLeb{d}\\
		& \hspace*{4.0cm} + \int_{\pertDomain} \frac{1}{\varepsilon} \big(\phi (\cdot ,u\circ T_\varepsilon^\dir) - \phi(T_\varepsilon^\dir, u\circ T_\varepsilon^\dir)\big)\big|\det (\Diff T_\varepsilon^\dir)\big|\; \dLeb{d}.
	\end{align*}
	We first discuss the behavior of $\pertDomain$ as $\varepsilon\to 0$. For $x\in\HoldAll$ define $\delta^x \defined \dist(x,\partial\Omega ) / \|\dir \|_\infty$. If $x\in\Omega$ then $\delta^x>0$ and whenever $\varepsilon < \delta^x$ also $T_\varepsilon^\dir (x)\in\Omega$, hence $x\in\pertDomain$. Conversely, if $x\in\HoldAll\setminus\overline{\Omega}$ then $\delta^x > 0$ and again for any $\varepsilon < \delta^x$ the distortion cannot push back into $\Omega$, i.e.\ $T_\varepsilon^\dir (x) \notin \Omega$ or equivalently $x\notin\pertDomain$. Thus
	\begin{equation}\label{proof:ConnectionShapeDerivativesIndicatorConvergence}
		\lim_{\varepsilon\to 0} \1_{\pertDomain} (x) = \1_\Omega (x) \qquad \text{ $\lambda^d$-a.e.}
	\end{equation}
	We now investigate the limits of each of the four integrals. 
	For the first, observe that by \eqref{proof:ConnectionShapeDerivativesIndicatorConvergence} for a.e.\ $x\in\HoldAll\setminus\overline{\Omega}$
	\begin{align*}
		\lim_{\varepsilon\rightarrow 0} \frac{1}{\varepsilon} \Bigl( \phi \big(x, u_\varepsilon^\dir(x)\big) - \phi \big(x, \Tilde{u}(x)\big) \Bigr) \1_{\pertDomain}(x) = 0
	\end{align*}
	while for a.e.\ $x\in \Omega$
	\begin{align*}
		\lim_{\varepsilon\rightarrow 0} \frac{1}{\varepsilon} \Bigl( \phi \big(x, u_\varepsilon^\dir(x)) - \phi \big(x, \tilde u(x)\big) \Bigr) \1_{\pertDomain}(x) = \Diff_u \phi\big(x,u(x)\big) \D u[\dir](x).
	\end{align*}
	Moreover, for $x\in\pertDomain$ we have by the mean value theorem and Lemma~\ref{lemma:IntegralConvergenceForDerivativeSplitt}
	\begin{align*}
		\frac{1}{\varepsilon} \big|\phi \big(x, u_\varepsilon^\dir(x)) - \phi \big(x, \tilde u(x)\big)\big|\, \1_{\pertDomain}(x) \leq C\frac{1}{\varepsilon}\big|\big( u_\varepsilon^\dir(x) - \uExt(x)\big)\big|\, \big(\1_{\pertDomain\cap\Omega}(x) + \1_{\pertDomain\setminus\Omega}(x)\big).
	\end{align*}
	Here
	\begin{multline*}
		\frac{1}{\varepsilon}\big|\big( u_\varepsilon^\dir(x) - u(x)\big)\big|\, \1_{\pertDomain\cap\Omega}(x) \\ \leq \big|\D u[\dir](x)\big|\1_\Omega(x) + \sup_{\delta\in[-\varepsilon_0,\varepsilon_0]\setminus\{0\}} \Big|\frac{1}{\delta}\big( u_\delta^\dir(x) - u(x)\big)- \D u[\dir](x)\Big|\, \1_{\Omega_\delta^\dir\cap\Omega}(x)
	\end{multline*}
	where the first summand is uniformly bounded by Remark~\ref{remark:ShapeDerivativeContinuous} and the second is uniformly bounded for sufficiently small $\varepsilon$ by Theorem~\ref{theorem:StronConvergenceShapeDerivative};
	on the other hand Lemma~\ref{lemma:IntegralConvergenceForDerivativeSplitt} implies that
	\begin{align*}
		\int_{\pertDomain} \frac{1}{\varepsilon}\big|\big( u_\varepsilon^\dir(x) - \uExt(x)\big)\big|\, \1_{\pertDomain\setminus\Omega}(x) \dLebesgue{d}{x} \rightarrow 0 \qquad\text{as }\varepsilon\to 0.
	\end{align*}
	Thus dominated convergence yields
	\begin{align*}
		&\lim_{\varepsilon\rightarrow 0} \int_{\pertDomain} \frac{1}{\varepsilon} \Bigl( \phi (\cdot, u_\varepsilon^\dir) - \phi (\cdot, \Tilde{u}) \Bigr) \dLeb{d} \\
		&\hspace*{0.5cm}= \lim_{\varepsilon\rightarrow 0} \Big(\int_{\HoldAll\cap\Omega} \frac{1}{\varepsilon} \big(\phi (\cdot, u_\varepsilon^\dir) - \phi (\cdot, \tilde{u})\big) \1_{\pertDomain}\dLeb{d} + \int_{\HoldAll\setminus\overline{\Omega}} \frac{1}{\varepsilon} \big(\phi (\cdot, u_\varepsilon^\dir) - \phi (\cdot, \Tilde{u})\big) \1_{\pertDomain}\dLeb{d}\Big)\\
		&\hspace*{0.5cm}= \int_{\Omega} \Dphiu \phi(\cdot, u) \D u[\dir] \dLeb{d}.
	\end{align*}
	
	Turning to the limits of the remaining integrals, note first that there is a function $\varphi\colon\HoldAll\times\R\rightarrow\R$ with $\sup_{x\in\HoldAll}\varepsilon^{-1}|\varphi(x,\varepsilon)| \rightarrow 0$ as $\varepsilon \rightarrow 0$ such that
	\begin{equation}\label{proof:ConnectionShapeDerivativesDeterminantDerivative}
		\det \big(\Diff T_\varepsilon^\dir(x)\big) = \det\big(\mathcal{I}_d-\varepsilon\Diff\dir (x)\big)=1+\varepsilon\trace \big(\Diff \dir (x)\big) + \varphi(x,\varepsilon),\qquad x\in\pertDomain.
	\end{equation}
	In particular $\det (\Diff T_\varepsilon^\dir)\to 1$ as $\varepsilon\to 0$ and
	\[
	\lim_{\varepsilon\rightarrow 0}\frac{1}{\varepsilon}\Big(1 - \big|\det(\Diff T_\varepsilon^\dir)\big|\Big) = -\trace(\Diff\dir ) = -\divergence(\dir).
	\]
	It follows that for a.e.\ $x\in\HoldAll$ we have
	\begin{align*}
		\lim_{\varepsilon\to 0}\frac{1}{\varepsilon} \phi \big(x, \Tilde{u}(x)\big)\Big(1-\big|\det (\Diff T_\varepsilon^\dir)\big|\Big)\1_{\pertDomain} (x) = -\phi \big(x,u(x)\big) \divergence (\dir)(x) \1_\Omega (x)
	\end{align*}
	and it is clear from \eqref{proof:ConnectionShapeDerivativesDeterminantDerivative} and the fact that $\dir$ is of class $\C^2$ and $\HoldAll$ is bounded that there is an integrable majorant.
	Hence
	\[
	\lim_{\varepsilon\rightarrow 0} \int_{\pertDomain} \frac{1}{\varepsilon} \phi (\cdot, \Tilde{u})\Bigl( 1 - \big|\det (\Diff T_\varepsilon^\dir)\big| \Bigr) \dLeb{d}
	= - \int_{\Omega} \phi (\cdot,u) \divergence(\dir) \dLeb{d}.
	\]
	Furthermore, we have
	\begin{multline*}
		\Big(\phi\big(x, \Tilde{u}(x)\big) - \phi \big(x , (u\circ T_\varepsilon^\dir)(x)\big) \Big)\big|\det \big(\Diff T_\varepsilon^\dir(x)\big)\big|\;\1_{\pertDomain}(x) \\ \rightarrow - \Dphiu\phi\big(x,u(x)\big) \big\langle \nabla u,\dir\big\rangle (x)\; \1_\Omega (x)
	\end{multline*}
	as $\varepsilon\to 0$, and again an integrable majorant exists by the mean value theorem so
	\[
	\lim_{\varepsilon\rightarrow 0} \int_{\pertDomain} \frac{1}{\varepsilon} \big(\phi(\cdot, \Tilde{u}) - \phi (\cdot ,u\circ T_\varepsilon^\dir) \big)\big|\det (\Diff T_\varepsilon^\dir)\big|\; \dLeb{d}
	= - \int_{\Omega} \Dphiu\phi\big(\cdot,u\big) \langle \nabla u,\dir\rangle \dLeb{d}.
	\]
	%	UUU		UPDATE
	%\newpage
	Finally, we have
	\begin{multline}
		\Big(\phi\big(x,(u\circ T_\varepsilon^\dir)(x)\big) - \phi\big(T_\varepsilon^\dir (x), (u\circ T_\varepsilon^\dir)(x)\big)\Big)\big|\det (\Diff T_\varepsilon^\dir (x))\big|\;\1_{\pertDomain} (x)\\ \rightarrow - \big\langle\Dphix\phi (\cdot,u),\dir\big\rangle (x)\;\1_\Omega (x)
	\end{multline}
	as $\varepsilon\to 0$, bounded by an integrable majorant, and thus
	\[
	\lim_{\varepsilon\rightarrow 0} \int_{\pertDomain} \frac{1}{\varepsilon} \big(\phi (\cdot ,u\circ T_\varepsilon^\dir) - \phi(T_\varepsilon^\dir, u\circ T_\varepsilon^\dir)\big)\big|\det (\Diff T_\varepsilon^\dir)\big|\; \dLeb{d}
	= - \int_{\Omega} \big\langle\Dphix\phi(\cdot,u),\dir\big\rangle \dLeb{d}.
	\]
	Combining the preceding three integrals and noting that
	\[
	\divergence\big(\phi(\cdot,u)\dir\big) = \phi(\cdot, u)\divergence(\dir) + \Dphiu\phi(\cdot,u)\big\langle\nabla u,\dir\big\rangle + \big\langle\Dphix\phi(\cdot,u),\dir\big\rangle
	\]
	we conclude that 
	\begin{align*}
		\D\Phi [\dir] = \int_\Omega - \divergence\big(\phi(\cdot,u)\dir\big) \dLeb{d} + \int_\Omega \Dphiu\phi (\cdot, u)\D u[\dir] \dLeb{d}.
	\end{align*}
	Now Gauss's divergence theorem, see e.g.\ \cite[Theorem 10.41]{LeeSmoothManifolds}, yields the claim.
\end{proof}

\end{document}